\newtheorem{thm}{Theorem}[section]
\newtheorem{prop}[thm]{Proposition}
\newtheorem{lem}[thm]{Lemma}
\newtheorem{cor}[thm]{Corollary}
\newtheorem{ithm}{Theorem}
\theoremstyle{definition}
\newtheorem{dfn}[thm]{Definition}
\theoremstyle{remark}
\newtheorem{rem}{Remark}
\newtheorem*{acknowledgments}{Acknowledgments}
\newcommand{\C}{\mathbb{C}}
\newcommand{\Z}{\mathbb{Z}}
\newcommand{\A}{\mathfrak{A}}
\newcommand{\KF}{K\!F}
\newcommand{\Hom}{\mathrm{Hom}}
\newcommand{\Ker}{\mathrm{Ker}}
\newcommand{\pt}{\mathrm{pt}}
\newcommand{\cC}{\mathcal{C}}
\renewcommand{\H}{\mathcal{H}}
\newcommand{\F}{\mathcal{F}}
\newcommand{\U}{\mathcal{U}}
\newcommand{\B}{\mathcal{B}}
\newcommand{\K}{\mathcal{K}}
\newcommand{\HF}{\mathcal{HF}}
\newcommand{\cKF}{\mathcal{KF}}
\renewcommand{\P}{\mathcal{P}}
\newcommand{\tbigoplus}{\textstyle \bigoplus}
\newcommand{\tcoprod}{{\textstyle \coprod}}
\title{Twisted $K$-theory and finite-dimensional approximation}
\author[K. Gomi]{Kiyonori Gomi}
\address{
The Department of Mathematics,
The University of Texas at Austin,
1 University Station C1200, Austin, TX 78712-0257 
USA.}
\email{kgomi@math.utexas.edu}
\subjclass{Primary 19L; Secondary 55N15, 55R65.}
\keywords{Twisted $K$-theory, generalization of vector bundle}
\date{}
\begin{document}

\begin{abstract}
We provide a finite-dimensional model of the twisted $K$-group twisted by any degree three integral cohomology class of a CW complex. One key to the model is Furuta's generalized vector bundle, and the other is a finite-dimensional approximation of Fredholm operators.
\end{abstract}

\maketitle



\section{Introduction}
\label{sec:introduction}

Since the work of Atiyah and Hirzebruch \cite{A-H}, $K$-theory has been recognized as a fundamental notion in topology and geometry. Twisted $K$-theory is a variant of $K$-theory originating from the works of Donovan-Karoubi \cite{D-K} and Rosenberg \cite{R}. Much focus is on twisted $K$-theory recently, due to applications, for example, to $D$-brane charges (\cite{K,W}), Verlinde algebras \cite{F-H-T} and quantum Hall effects \cite{CHMM}.

\medskip

As is well-known, the $K$-group $K(X)$ of a compact space $X$ admits various formulations. The standard formulation of $K(X)$ uses finite dimensional vector bundles on $X$. One can also formulate $K(X)$ by using a $C^*$-algebra as well as the space of Fredholm operators. To define twisted $K$-theory, we usually appeal to the latter two formulations above, involving some infinite dimensions. 

\medskip

$K$-theory enjoys numerous applications to topology and geometry because of its realization by means of vector bundles. To give a similar realization of twisted $K$-theory seems to be an interesting problem to be studied not only for better understanding but also for further applications.

\medskip

So far, as a partial answer to the problem, \textit{twisted vector bundles} or \textit{bundle gerbe $K$-modules} \cite{BCMMS} are utilized to realize the twisted $K$-group whose ``twisting'' satisfies a condition. The condition is that the degree three integral cohomology class corresponding to the twisting is of finite order. A complete answer to this realization problem, valid for twistings corresponding to any degree three integral cohomology classes, was known by Andr\'e Henriques. The aim of the present paper is to give another complete answer by generalizing the following result announced in \cite{G}:

\begin{ithm} \label{ithm:main}
Let $X$ be a CW complex, $P$ a principal bundle over $X$ whose structure group is the projective unitary group of a separable Hilbert space of infinite dimension, and $K_P(X)$ the twisted $K$-group. We write $\KF_P(X)$ for the homotopy classes of $P$-twisted ($\Z_2$-graded) vectorial bundles over $X$. Then there exists a natural isomorphism $\alpha : \ K_P(X) \longrightarrow \KF_P(X)$.
\end{ithm}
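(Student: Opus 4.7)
The plan is to construct $\alpha$ by finite-dimensional approximation of Fredholm sections and then verify bijectivity. A class in $K_P(X)$ is represented by a section $F$ of the bundle of $\Z_2$-graded Fredholm operators on $P \times_{PU(\H)} (\H \oplus \H)$. To such a section one wants to assign the virtual bundle $[\Ker F] - [\mathrm{Coker}\, F]$; since neither term is a vector bundle in general, the classical fix is to choose a finite-codimensional subspace $V \subset \H$ whose image is transverse to $F$ over a compact piece, so that $\Ker(F|_{F^{-1}(V)})$ and $V$ become honest finite-rank bundles. The map $\alpha$ sends $F$ to the $P$-twisted vectorial bundle packaging these finite-rank data. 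Because $PU(\H)$ does not preserve any fixed finite-dimensional subspace, such a $V$ cannot be chosen globally whenever $P$ is nontrivial; but Furuta's vectorial bundles are designed precisely to absorb local choices of $V$ into the structure, with the equivalence relation on $\KF_P(X)$ capturing the ambiguity.

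The first step is to show $\alpha$ is well-defined on homotopy classes. I would verify that replacing $V$ by a larger subspace yields an equivalent vectorial bundle, via the stabilization move built into the definition of $\KF_P(X)$, and that a homotopy of Fredholm sections produces, via the same approximation procedure over $X \times [0,1]$, a homotopy of vectorial bundles. This reduces to standard transversality combined with homotopy extension carried out along the CW filtration of $X$: on each cell, compactness guarantees the existence of a transverse $V$, and on overlaps the equivalence relation absorbs the change of approximating subspace.

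Surjectivity of $\alpha$ is expected to be the easier direction: given a $P$-twisted vectorial bundle, I would extend its defining linear map from the finite-rank subbundle to the full associated $\H$-bundle by the identity on the orthogonal complement, obtaining a Fredholm section whose approximation recovers the original vectorial bundle on the nose. The flexibility in Furuta's definition ensures that the infinite-dimensional completion can be chosen compatibly with the $PU(\H)$-action.

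Injectivity is where I expect the main obstacle. If $\alpha(F_0)$ and $\alpha(F_1)$ are homotopic in $\KF_P(X)$, one must lift the homotopy of vectorial bundles to a genuine Fredholm homotopy between $F_0$ and $F_1$. Each elementary move of the equivalence relation on $\KF_P(X)$, namely stabilization by a trivial summand, change of approximating subbundle, and linear interpolation inside a given approximation, has to be realized by an explicit Fredholm homotopy compatible with the $PU(\H)$-equivariance imposed by the twisting $P$. I would proceed cell by cell, using linear interpolation inside each cell and homotopy extension across the skeleta, exploiting the infinite-dimensional flexibility of $\H$ to absorb discrepancies. The precise match between Furuta's equivalence relation and the operations available at the Fredholm level is what makes the theorem hold for arbitrary twistings, removing the finite-order restriction on the degree-three class that limits the earlier approach via twisted vector bundles.
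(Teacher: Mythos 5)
Your construction of $\alpha$ is in the right spirit---the paper also defines $\alpha$ by finite-dimensional approximation, though by spectral truncation rather than by choosing a transverse finite-codimensional $V$: over each $U_\alpha$ one picks $\mu_\alpha$ in the resolvent set of $\hat{A}^2$ and takes the sum of the eigenspaces below $\mu_\alpha$, and the change-of-$\mu$ ambiguity is exactly what the morphisms of $\HF(U)$ absorb. But your proof of bijectivity has two genuine gaps. For surjectivity, ``extend the finite-rank map by the identity on the orthogonal complement'' presupposes that a $P$-twisted vectorial bundle comes with a global finite-rank subbundle of the associated Hilbert bundle. It does not: the local pieces $(E_\alpha, h_\alpha)$ are glued by maps $\phi_{\alpha\beta}$ satisfying the cocycle condition only up to the relation $\fallingdotseq$ (agreement on the low part of the spectrum) and up to the $U(1)$-valued discrepancy $z_{\alpha\beta\gamma}$ coming from the lifts $g_{\alpha\beta}$, so they need not assemble into a subbundle, and no equivariant embedding into $P \times_{Ad}\H$ is part of the data. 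Even in the untwisted case, Furuta's surjectivity is not an ``extend by the identity'' argument: it requires a partition-of-unity construction realizing an honest $\Z_2$-graded vector bundle as the kernel of a surjective bundle map (reproduced in the paper's appendix).

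For injectivity you correctly identify the hard point but offer only a plan. A homotopy in $\KF_P(X)$ is by definition a twisted vectorial bundle over $X \times I$, not a finite chain of elementary moves, so there is no list of moves to realize one by one at the Fredholm level; and linear interpolation of Fredholm sections need not stay Fredholm. The paper never proves twisted surjectivity or injectivity directly. Instead it organizes $K_P^*$ and $\KF_P^*$ into (partial) generalized cohomology theories on CW pairs, verifies that $\alpha$ is a natural transformation compatible with excision and the connecting homomorphisms (the delicate point being compatibility with Bott periodicity, Lemma \ref{lem:compatible_with_periodicity}), proves bijectivity only when $P$ is trivial by reducing to Furuta's theorem $K(X) \cong \KF(X)$, and then runs an induction on cells with the five lemma---the twisting trivializes over each disk $D^q$, so the relative terms are handled by the untwisted case---followed by a Milnor telescope argument using the additivity axiom to pass to infinite complexes. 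Your cell-by-cell lifting is in effect an attempt to redo the five-lemma step by hand without the relative groups and exact sequences that make it work; to close the gap you would need essentially the excision and exactness machinery for $\KF_P^*$ that the paper builds in Section \ref{sec:cohomology_KF}.
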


\medskip

The notion of \textit{vectorial bundle} is a generalization of the notion of vector bundles due to Mikio Furuta \cite{F1}. Vectorial bundles realize the ordinary $K$-group $K(X)$, and arise as finite-dimensional geometric objects approximating families of Fredholm operators. We can think of the approximation as a linear version of the finite-dimensional approximation of the Seiberg-Witten equations \cite{F2}. Since $K_P(X)$ consists of certain families of Fredholm operators, a twisted version of vectorial bundles provides a suitable way to realize twisted $K$-theory.

\medskip

As a simple application of Theorem \ref{ithm:main}, we can generalize some notions of 2-vector bundles \cite{B-D-R,B}. The notion of 2-vector bundles in the sense of Brylinski \cite{B} uses the category of vector bundles, and a 2-vector bundle of rank 1 reproduces the category of twisted vector bundles, so that the twisted $K$-group whose twisting corresponds to a degree three integral cohomology class of finite order. By using the category of vectorial bundles instead, we get a proper generalization of Brylinski's 2-vector bundles. This generalization reproduces the category of twisted vectorial bundles, and hence the twisted $K$-group with any twisting. A similar replacement may generalize 2-vector bundles of Baas, Dundas and Rognes \cite{B-D-R}, which they studied in seeking for a geometric model of elliptic cohomology.

\medskip

Also, Theorem \ref{ithm:main} allows us to construct Chern characters of 
twisted $K$-classes in a purely finite-dimensional manner \cite{G-Y}.


\medskip

In a word, the proof of Theorem \ref{ithm:main} is a comparison of cohomology theories: as is well-known, the twisted $K$-group $K_P(X)$ fits into a certain generalized cohomology theory $K_P^*(X, Y)$. The group $\KF_P(X)$ also fits into a similar cohomology theory $\KF_P^*(X, Y)$, and the homomorphism $\alpha : K_P(X) \to \KF_P(X)$ extends to a natural transformation between these two cohomology theories. Then, appealing to a standard method in algebraic topology, we compare these cohomology theories to show their equivalence.

\medskip

According to the outline of the proof above, this paper is organized as follows. In Section \ref{sec:twisted_K}, we review a definition of twisted $K$-theory and a construction of twisted $K$-cohomology $K^*_P(X, Y)$. In Section \ref{sec:vectorial_bundle}, we introduce the notion of ($\Z_2$-graded) vectorial bundles and its twisted version. In Section \ref{sec:cohomology_KF}, we construct the cohomology theory $\KF^*_P(X, Y)$. In Section \ref{sec:fda}, we construct the natural transformation between $K^*_P(X, Y)$ and $\KF^*_P(X, Y)$. A key to the construction is a finite-dimensional approximation of a family of Fredholm operators. After a study of the natural transformation, we compare the cohomology theories to derive Theorem \ref{ithm:main}. Finally, in Appendix, proof of Furuta's results, crucial to the present paper, are culled from the Japanese textbook \cite{F1} for convenience.

\bigskip

\begin{acknowledgments}
I benefited a lot from discussion with M. Furuta on various stage of the work, and I am grateful to him. I thank T. Moriyama for suggestions regarding the proof of Theorem \ref{ithm:main}. I also thank A. Henriques for suggestions about the proof and for comments on a draft. I am indebted to D. Freed for useful discussion. The author's research is supported by a JSPS Postdoctoral Fellowship for Research Abroad.
\end{acknowledgments}


\section{Twisted $K$-theory}
\label{sec:twisted_K}

We here review twisted $K$-cohomology theory, following \cite{A-Se,C-W} mainly.


\subsection{Review of twisted $K$-theory}

Let $PU(\H) = U(\H)/U(1)$ be the projective unitary group of a separable Hilbert space $\H$ of infinite-dimension. We topologize $PU(\H)$ by using the compact-open topology in the sense of \cite{A-Se}. Let $\F(\H)$ be the set of bounded linear operators $A : \H \to \H$ such that $A^*A - 1$ and $AA^* - 1$ are compact operators:
$$
\F(\H) = \{ A : \H \to \H |\ A^*A - 1, AA^* - 1 \in \mathcal{K}(\H) \}.
$$
Note that $\F(\H)$ is a subset of the space of Fredholm operators on $\H$. We induce a topology on $\F(\H)$ by using the map 
$$
\begin{array}{ccc}
\F(\H) & \longrightarrow &
\mathcal{B}(\H)_\mathrm{co} \times \mathcal{B}(\H)_\mathrm{co} \times 
\mathcal{K}(\H)_\mathrm{norm} \times \mathcal{K}(\H)_\mathrm{norm}, \\
 & & \\
A & \mapsto & (A, \ A^*, \ A^*A-1, \ AA^*-1)
\end{array}
$$
where $\mathcal{B}(\H)_\mathrm{co}$ is the space of bounded linear operators $\mathcal{B}(\H)$ topologized by the compact-open topology, and $\mathcal{K}(\H)_\mathrm{norm}$ is the space of compact operators $\mathcal{K}(\H)$ topologized by the usual operator norm. Then $\F(\H)$ is a representing space for $K$-theory, and $PU(\H)$ acts continuously on $\F(\H)$ by conjugation (\cite{A-Se}). 

In this paper, the ``twist'' in twisted $K$-theory is given by a principal $PU(\H)$-bundle. For a principal $PU(\H)$-bundle $P \to X$ given, the conjugate action gives the associated bundle $P \times_{Ad} \F(\H) \to X$ whose fiber is $\F(\H)$.

\begin{dfn} \label{def:twisted_K}
Let $X$ be a compact Hausdorff space, and $P \to X$ a principal $PU(\H)$-bundle. We define the twisted $K$-group $K_P(X)$ to be the group consisting of fiberwise homotopy classes of the sections of $P \times_{Ad} \F(\H) \to X$:
$$
K_P(X) = \Gamma(X, P \times_{Ad}\F(\H))/\mathrm{homotopy},
$$
where the addition in $K_P(X)$ is given by fixing an isomorphism $\H \oplus \H \cong \H$.
\end{dfn}

\begin{rem}
As in \cite{A-Se}, we can consider a more refined ``twist'' by introducing a $\Z_2$-grading to the Hilbert space and using unitary transformations of degree 1. However, the present paper does not cover the case.
\end{rem}

\begin{rem}
In \cite{A-Se}, a projective space bundle plays the role of a ``twisting''. Since the structure group of the projective space bundle is $PU(\H)$, Definition \ref{def:twisted_K} gives the same twisted $K$-group as that in \cite{A-Se}.
\end{rem}

\begin{rem}
Instead of the compact-open topology, we can also work with the topology on $PU(\H)$ given by the operator norm. In this case, the formulation of twisted $K$-theory uses the space of (bounded) Fredholm operators on $\H$ equipped with the operator norm topology, instead of $\F(\H)$. An advantage of the compact-open topology, other than that pointed out in \cite{A-Se}, is that it simplifies some argument in Subsection \ref{subsec:fda}.
\end{rem}


\subsection{Review of twisted $K$-cohomology}

We formulate twisted $K$-cohomology as a certain generalized cohomology.

\medskip

We write $\cC$ for the category of CW pairs: an object in $\cC$ is a pair $(X, Y)$ consisting of a CW complex $X$ and its subcomplex $Y$. A morphism $f : (X', Y') \to (X, Y)$ is a continuous map $f : X' \to X$ such that $f(Y') \subset Y$. We also write $\widehat{\cC}$ for the category of CW pairs equipped with $PU(\H)$-bundles:  an object $(X, Y; P)$ in $\widehat{\cC}$ consists of a CW pair $(X, Y) \in \cC$ and a principal $PU(\H)$-bundle $P \to X$. A morphism $(f, F) : (X', Y'; P') \to (X, Y; P)$ consists of a morphism $f : (X', Y') \to (X, Y)$ in $\cC$ and a bundle map $F : P' \to P$ covering $f$. A CW complex $X$ equipped with a principal $PU(\H)$-bundle $P \to X$ will be identified with $(X, \emptyset; P) \in \widehat{\cC}$.

\medskip

Let $(X, Y) \in \cC$ be a CW pair and $P \to X$ a principal $PU(\H)$-bundle. The \textit{support} of a section $\mathbb{A} \in \Gamma(X, P \times_{Ad} \F(\H))$ is defined to be the closure of the set consisting of the points at which $\mathbb{A}$ is not invertible:
$$
\mathrm{Supp} \mathbb{A} = 
\overline{
\{ x \in X |\ \mbox{$\mathbb{A}_x : \H \to \H$ is not invertible} \}
}.
$$
We define $K_P(X, Y)$ by using sections $\mathbb{A} \in \Gamma(X, P \times_{Ad} \F(\H))$ such that $\mathrm{Supp} \mathbb{A} \cap Y = \emptyset$. In $K_P(X, Y)$, two sections $\mathbb{A}_0$ and $\mathbb{A}_1$ are identified if they are connected by a section $\tilde{\mathbb{A}} \in \Gamma(X \times I, (P \times I) \times_{Ad} \F(\H))$ such that $\mathrm{Supp}{\tilde{\mathbb{A}}} \cap (Y \times I) = \emptyset$, where $I$ is the interval $[0, 1]$. We then define the twisted $K$-cohomology groups $K_P^{-n}(X, Y)$ as follows:
$$
K_P^{-n}(X, Y) = 
\left\{
\begin{array}{lc}
K_{P \times I^n}(X\times I^n, Y \times I^n \cup X \times \partial I^n), &
(n \ge 0) \\
K_P^{n}(X, Y). &
(n < 0)
\end{array}
\right.
$$
Clearly, a morphism $(f, F) : (X', Y'; P') \to (X, Y; P)$ induces a homomorphism $(f, F)^* : K_P^n(X, Y) \to K_{P'}^n(X', Y')$ for all $n \in \Z$. In the case of $P' = f^*P$, we simply write $f^* :  K_P^n(X, Y) \to K_{f^*P}^n(X', Y')$ for the homomorphism induced from $(f, \widehat{f})$, where $\widehat{f} : f^*P \to P$ is the canonical bundle map covering $f$.

Now, we summarize basic properties of twisted $K$-cohomology theory (\cite{A-Se,C-W,F-H-T1}):

\begin{prop} \label{prop:twisted_K_cohomology}
The assignment of $\{ K_P^n(X, Y) \}_{n \in \Z}$ to $(X, Y; P) \in \widehat{\cC}$ has the following properties:
\begin{enumerate}
\item (Homotopy axiom) 
If $(f_i, F_i) : (X', Y'; P') \to (X, Y; P)$, ($i = 0, 1$) are homotopic, then the induced homomorphisms coincide: $(f_0, F_0)^* = (f_1, F_1)^*$.

\item (Excision axiom)
For subcomplexs $A, B \subset X$, the inclusion map induces the isomorphism:
$$
K_{P|_{A \cup B}}^n(A \cup B, B) \cong K_{P|_{A}}^n(A, A \cap B). \quad (n \in \Z)
$$

\item (Exactness axiom)
There is the natural long exact sequence:
$$
\cdots \to
K^{n-1}_{P|_Y}(Y) \overset{\delta_{n-1}}{\to}
K^{n}_P(X, Y) \to
K^{n}_P(X) \to
K^{n}_{P|_Y}(Y) \overset{\delta_{n}}{\to} \cdots.
$$

\item (Additivity axiom)
For a family $\{ (X_\lambda, Y_\lambda; P_\lambda) \}_{\lambda \in \Lambda}$ in $\widehat{\cC}$, the inclusion maps $X_\lambda \to \coprod_\lambda X_\lambda$ induce the natural isomorphism:
$$
K_{\coprod_\lambda P_\lambda}^{-n}
(\tcoprod_\lambda X_\lambda, \tcoprod_\lambda Y_\lambda)
\cong
\prod_\lambda 
K_{P_\lambda}^{-n}(X_\lambda, Y_\lambda). \quad (n \in \Z)
$$

\item (Bott periodicity)
There is the natural isomorphism:
$$
\beta_n : \ K_P^n(X, Y) \longrightarrow K_P^{n-2}(X, Y). \quad (n \in \Z)
$$

\end{enumerate}
\end{prop}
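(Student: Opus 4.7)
The strategy is to verify the five axioms directly from the section-with-support model, exploiting two recurrent facts: operations on Fredholm operators (sum, composition, polar decomposition, straight-line interpolation between invertibles) all commute with the conjugation action of $PU(\H)$ on $\F(\H)$, so they descend to fiberwise operations on $P\times_{Ad}\F(\H)$; and the identity operator $1\in\F(\H)$ is a canonical fixed point, hence defines a canonical invertible section of any such associated bundle. With these two observations in hand, homotopy invariance and additivity are essentially formal. For homotopy, I would note that a homotopy between morphisms $(f_0,F_0)$ and $(f_1,F_1)$ is a morphism $(h,H):(X'\times I,Y'\times I;P'\times I)\to(X,Y;P)$, and pulling back a section $\mathbb{A}$ with the prescribed support condition produces an explicit section on $X'\times I$ realizing the homotopy between $(f_0,F_0)^*\mathbb{A}$ and $(f_1,F_1)^*\mathbb{A}$; the support condition is preserved because $h$ sends $Y'\times I$ into $Y$. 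Additivity is immediate: a section on $\coprod_\lambda X_\lambda$ is a tuple, and the support splits accordingly.

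For excision, given a section $\mathbb{A}$ on $A$ with $\mathrm{Supp}\,\mathbb{A}\cap(A\cap B)=\emptyset$, I extend it by the canonical identity section on $B$, interpolating on a collar of $A\cap B$ through invertibles using polar decomposition (which is $PU(\H)$-equivariant). This produces a section on $A\cup B$ with support disjoint from $B$, and the construction is well-defined on homotopy classes, providing the inverse to the restriction map. The exactness axiom follows the standard three-step verification for a pair $(X,Y)$: the composite $K^n_P(X,Y)\to K^n_P(X)\to K^n_{P|_Y}(Y)$ vanishes tautologically; a class on $X$ restricting to zero on $Y$ lifts to a relative class by using the nullhomotopy over $Y$ to deform the section to one with support disjoint from $Y$; and a class on $Y$ killed by $\delta_{n-1}$ extends across $X$ by the same deformation plus excision. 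The connecting map $\delta_{n-1}$ itself is built by representing a class in $K^{n-1}_{P|_Y}(Y)$ by a section over $Y\times I^{n-1}$, extending by the identity on $X\times\partial I^n\setminus(Y\times I^{n-1}\times\{0\})$, and regarding the result as a relative class over $(X\times I^n,Y\times I^n\cup X\times\partial I^n)$.

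The main obstacle is Bott periodicity, because it is the only axiom that inputs a genuinely global fact rather than a fiberwise manipulation. I would derive it from an explicit $PU(\H)$-equivariant map $\beta:\F(\H)\to\Omega^2\F(\H)$, for instance the Atiyah--Singer Bott map on Fredholms, and verify that its construction uses only operators and functional calculi that are invariant under conjugation by unitaries modulo scalars, hence $PU(\H)$-equivariant. Passing to associated bundles gives a map of section spaces $\Gamma(X,P\times_{Ad}\F(\H))\to\Gamma(X\times I^2,(P\times I^2)\times_{Ad}\F(\H))$ compatible with the support conditions, inducing $\beta_n:K^n_P(X,Y)\to K^{n-2}_P(X,Y)$. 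That $\beta_n$ is a natural isomorphism is then reduced, by the already-established homotopy, excision and exactness axioms together with additivity, to the untwisted case $P=X\times PU(\H)$ over a point, where it becomes the classical Bott isomorphism for $\F(\H)$. The subtlety I expect to spend the most care on is equivariance of the Bott map: any construction involving a choice of splitting $\H\cong\H\oplus\H$ or a choice of generator of $\pi_2$ must be carried out in a way that respects the $PU(\H)$-action, and alternative formulations (via Clifford-linear Fredholm operators, or via the Toeplitz model) may be easier to check than the naive one.
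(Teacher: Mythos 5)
Your overall route is the same as the paper's: the paper disposes of homotopy and additivity as formal, attributes excision to the contractibility of the invertible part of $\F(\H)$, gets exactness Atiyah-style from the cofibration sequence together with the identification $\Sigma^n(X \sqcup \pt) \simeq X \times I^n/(Y \times I^n \cup X \times \partial I^n)$ (positive degrees then coming from periodicity), and quotes the $PU(\H)$-equivariant homotopy equivalence $\F(\H) \simeq \Omega^2\F(\H)$ for Bott periodicity. Your identification of equivariance of all constructions as the recurring issue, and of periodicity as the one genuinely global input to be reduced to the untwisted case, is exactly right.

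There is, however, one concrete gap. For excision (and again where you claim the composite $K_P^n(X,Y) \to K_P^n(X) \to K_{P|_Y}^n(Y)$ ``vanishes tautologically''), polar decomposition is not enough. Polar decomposition retracts an invertible section onto a unitary-valued section, but you then have to connect that unitary-valued section of $P \times_{Ad}\F(\H)$ to the canonical identity section, and a straight-line interpolation between invertibles need not stay invertible. What is actually needed is that the subspace of invertible (equivalently, after polar decomposition, unitary) elements of $\F(\H)$ is contractible \emph{and} that this contraction can be implemented compatibly with the $PU(\H)$-action so that it applies fiberwise in the associated bundle. In the compact-open topology used here this is a nontrivial theorem of Atiyah--Segal (the analogue of Kuiper's theorem), and it is precisely the input the paper cites for excision; your sketch should invoke it explicitly rather than deriving the extension from fiberwise operations alone. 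Relatedly, your presentation orders exactness before periodicity: the direct three-step verification only yields the exact sequence in degrees $n \le 0$, and the segments in positive degrees are \emph{defined} by transporting along $\beta_n$; since your proof of $\beta_n$ being an isomorphism uses the exactness and excision axioms, you should note that only the non-positive part is needed there, so no circularity arises. With the contractibility statement supplied and that ordering made explicit, the argument is complete and matches the paper's.
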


The homotopy axiom and the additivity axiom are clear. The excision axiom is due to the fact that the set of invertible operators in $\F(\H)$ is contractible \cite{A-Se}. The periodicity is a consequence of the homotopy equivalence $\F(\H) \simeq \Omega^2\F(\H)$, (\cite{A-Se,A-Si}). The exactness axiom follows essentially from the cofibration sequence:
$$
\cdots \longleftarrow
\Sigma^2 Y \longleftarrow
\Sigma (X/Y) \longleftarrow
\Sigma X \longleftarrow 
\Sigma Y \longleftarrow
X/Y \longleftarrow
X \longleftarrow
Y,
$$
where $\Sigma$ stands for the reduced suspension. Noting the homotopy equivalence $\Sigma^n(X \coprod \pt) \simeq X \times I^n / (Y \times I^n \cup X \times \partial I^n)$, we obtain the non-positive part of the long exact sequence for a pair $(X, Y)$ in a way similar to that used in \cite{A}. Then we get the positive part by using the Bott periodicity. A similar construction of the exact sequence will be performed in Subsection \ref{subsec:exactness_axiom} in our finite-dimensional model.

\begin{rem}
The definition of $K_P(X,Y)$ in  \cite{C-W} is equivalent to that in this paper, because of the definition of the support of $\mathbb{A} \in \Gamma(X, P \times_{Ad} \F(\H))$. The definition of $K^{-n}_P(X)$ in \cite{A-Se,C-W}, which utilizes sections of the bundle $P \times_{Ad} \Omega^n\F(\H)$ over $X$, is also equivalent to our definition.
\end{rem}


\section{Vectorial bundle}
\label{sec:vectorial_bundle}

We here introduce Furuta's generalized vector bundles \cite{F1} as \textit{vectorial bundles}.  Our formulation differs slightly from the original formulation in \cite{F1}. Twisted vectorial bundles are also introduced in this section.


\subsection{Vectorial bundle}
\label{subsec:vectorial_bundle}

\begin{dfn} \label{dfn:category_HF}
Let $X$ be a topological space. For a subset $U \subset X$, we define the category $\HF(U)$ as follows. An object in $\HF(U)$ is a pair $(E, h)$ consisting of a $\Z_2$-graded Hermitian vector bundle $E \to U$ of finite rank and a Hermitian map $h : E \to E$ of degree 1. The homomorphisms in $\HF(U)$ are defined by
$$
\Hom_{\HF(U)}((E, h), (E', h')) =
\{ \phi : E \to E' |\ \mbox{degree $0$}, \ \phi h = h' \phi \}/\fallingdotseq,
$$
where $ \ \fallingdotseq \ $ stands for an equivalence relation. That $\phi \fallingdotseq \phi'$ means:
\begin{quote}
For each point $x \in U$, there are a positive number $\mu > 0$ and an open subset $V \subset U$ containing $x$ such that: for all $y \in V$ and $\xi \in (E, h)_{y, < \mu}$, we have $\phi(\xi) = \phi'(\xi)$.
\end{quote}
In the above, we put
$$
(E, h)_{y, < \mu} 
= 
\bigoplus_{\lambda < \mu} \Ker(h_y^2 - \lambda)
=
\bigoplus_{\lambda < \mu}
\{ \xi \in E_y |\ h_y^2 \xi = \lambda \xi \}.
$$
\end{dfn}

By abuse of notation, we just write $\phi$ for the equivalence class $[\phi]$ of a map $\phi : (E, h) \to (E', h')$ in $\Hom_{\HF(U)}((E, h), (E', h'))$. For a subset $V \subset U$, the restriction $(E, h) \mapsto (E, h)|_V$ defines a functor $\HF(U) \to \HF(V)$, which composes properly for a smaller subset in $V$.

\begin{dfn} \label{dfn:vectorial_bundle}
For a space $X$, we define the category $\cKF(X)$ as follows. 

\begin{enumerate}
\item
An object $(\U, (E_\alpha, h_\alpha), \phi_{\alpha \beta})$ in $\cKF(X)$ consists of an open cover $\U = \{ U_\alpha \}_{\alpha \in \A}$ of $X$, objects $(E_\alpha, h_\alpha)$ in $\HF(U_\alpha)$, and homomorphisms $\phi_{\alpha \beta} : (E_\beta, h_\beta) \to (E_\alpha, h_\alpha)$ in $\HF(U_{\alpha \beta})$ such that:
$$
\begin{array}{rcll}
\phi_{\alpha \beta} \phi_{\beta \alpha} & = & 1 & 
\mbox{in $\HF(U_{\alpha \beta})$}; \\
\phi_{\alpha \beta} \phi_{\beta \gamma} & = & \phi_{\alpha \gamma} &
\mbox{in $\HF(U_{\alpha \beta \gamma})$},
\end{array}
$$
where $U_{\alpha \beta} = U_\alpha \cap U_\beta$ and $U_{\alpha \beta \gamma} = U_\alpha \cap U_\beta \cap U_\gamma$ as usual. We call an object in $\cKF(X)$ a \textit{vectorial bundle} over $X$. 

\item
A homomorphism $(\{ U'_{\alpha'} \}, (E'_{\alpha'}, h'_{\alpha'}), \phi'_{\alpha' \beta'}) \to (\{ U_\alpha \}, (E_\alpha, h_\alpha), \phi_{\alpha \beta})$ consists of homomorphisms $\psi_{\alpha \alpha'} : (E'_{\alpha'}, h'_{\alpha'}) \to (E_\alpha, h_\alpha)$ in $\HF(U_\alpha \cap U'_{\alpha'})$ such that the following diagrams commute in $\HF(U_\alpha \cap U'_{\alpha'} \cap U_{\beta'})$ and $\HF(U_\alpha \cap U_\beta \cap U'_{\alpha'})$, respectively.
$$
\xymatrix{
E'_{\alpha'} \ar[r]^{\psi_{\alpha \alpha'}} & E_\alpha \\
E'_{\beta'} \ar[ru]_{\psi_{\alpha \beta'}} \ar[u]^{\phi'_{\alpha' \beta'}} &
}
\quad \quad \quad \quad 
\xymatrix{
 & E_\alpha \\
E'_{\beta'} \ar[ru]^{\psi_{\alpha \beta'}} \ar[r]_{\psi_{\beta \beta'}} &
E_\beta \ar[u]_{\phi_{\alpha \beta}} 
}
$$
\end{enumerate}
\end{dfn}

An \textit{isomorphism} of vectorial bundles is a homomorphism in $\cKF(X)$ admitting an inverse. Vectorial bundles $\mathbb{E}_0$ and $\mathbb{E}_1$ over $X$ are said to be \textit{isomorphic} if there exists an isomorphism $\mathbb{E}_0 \to \mathbb{E}_1$. To indicate the relationship, we will write $\mathbb{E}_0 \cong \mathbb{E}_1$. Vectorial bundles $\mathbb{E}_0$ and $\mathbb{E}_1$ are said to be \textit{homotopic} if there exists $\tilde{\mathbb{E}} \in \cKF(X \times I)$ such that $\tilde{\mathbb{E}}|_{X \times \{ i \}} \cong \mathbb{E}_i$ for $i = 0, 1$. We will write $[ \mathbb{E} ]$ for the homotopy class of a vectorial bundle $\mathbb{E} \in \cKF(X)$. 

\begin{lem}[\cite{F1}]
Let $\KF(X)$ be the homotopy classes of vectorial bundles on $X$. Then $\KF(X)$ is an abelian group.
\end{lem}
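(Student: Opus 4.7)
The plan is to induce the group operation on $\KF(X)$ from the direct sum of vectorial bundles. Given two vectorial bundles $\mathbb{E} = (\{U_\alpha\}, (E_\alpha, h_\alpha), \phi_{\alpha\beta})$ and $\mathbb{F} = (\{V_{\alpha'}\}, (F_{\alpha'}, k_{\alpha'}), \psi_{\alpha'\beta'})$, I would define $\mathbb{E} \oplus \mathbb{F}$ on the common refinement $\{U_\alpha \cap V_{\alpha'}\}$ with local data $(E_\alpha \oplus F_{\alpha'}, h_\alpha \oplus k_{\alpha'})$ and transitions $\phi_{\alpha\beta} \oplus \psi_{\alpha'\beta'}$. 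Since $\oplus$ is functorial in each variable and compatible with restriction, applying the same formula to homotopies on $X \times I$ gives a homotopy between direct sums, so $\oplus$ descends to a well-defined operation on homotopy classes. The obvious natural isomorphisms of $\Z_2$-graded Hermitian bundles produce associativity and commutativity inside $\cKF(X)$, and the rank-zero vectorial bundle serves as a two-sided identity.

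The main step is constructing inverses. For $\mathbb{E} = (\{U_\alpha\}, (E_\alpha, h_\alpha), \phi_{\alpha\beta})$, let $\mathbb{E}^{op}$ denote the vectorial bundle obtained by negating the grading operator $\Gamma_\alpha$ on each patch; the relations $\{h_\alpha, \Gamma_\alpha\} = 0$ and $[\phi_{\alpha\beta}, \Gamma_\alpha] = 0$ are preserved, so $\mathbb{E}^{op}$ is well defined. I claim $[\mathbb{E}] + [\mathbb{E}^{op}] = 0$, witnessed by a vectorial bundle $\tilde{\mathbb{E}} \in \cKF(X \times I)$ restricting to $\mathbb{E} \oplus \mathbb{E}^{op}$ at $t = 0$ and to a uniformly invertible object at $t = 1$. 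Over $U_\alpha \times I$, take the pullback of $E_\alpha \oplus E_\alpha$ together with
$$
h_{\alpha, t} = (h_\alpha \oplus h_\alpha) + t\, K_\alpha,
\qquad
K_\alpha = \begin{pmatrix} 0 & \Gamma_\alpha \\ \Gamma_\alpha & 0 \end{pmatrix}.
$$
A short calculation shows that $K_\alpha$ is Hermitian of degree $1$ with respect to the grading $\Gamma_\alpha \oplus (-\Gamma_\alpha)$, satisfies $K_\alpha^2 = \mathrm{id}$, and anticommutes with $h_\alpha \oplus h_\alpha$, whence $h_{\alpha, t}^2 = (h_\alpha \oplus h_\alpha)^2 + t^2 \cdot \mathrm{id}$. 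The transitions $\phi_{\alpha\beta} \oplus \phi_{\alpha\beta}$, pulled back to $U_{\alpha\beta} \times I$, intertwine the $K_\alpha$ among themselves because the $\phi_{\alpha\beta}$ are of degree zero, so they provide the required transition homomorphisms for $\tilde{\mathbb{E}}$.

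Finally, at $t = 1$ one has $h_{\alpha,1}^2 \ge \mathrm{id}$, so the low-energy subspace $(E_\alpha \oplus E_\alpha, h_{\alpha, 1})_{y, < \mu}$ is zero for every $0 < \mu \le 1$; this forces every morphism with source or target $(E_\alpha \oplus E_\alpha, h_{\alpha, 1})$ in $\HF(U_\alpha)$ to be $\fallingdotseq$-equivalent to zero, so the zero maps to and from the rank-zero object supply mutually inverse isomorphisms in $\HF(U_\alpha)$. These assemble into a $\cKF(X)$-isomorphism from $\tilde{\mathbb{E}}|_{X \times \{1\}}$ to the zero vectorial bundle, yielding $[\mathbb{E} \oplus \mathbb{E}^{op}] = [\tilde{\mathbb{E}}|_{X \times \{1\}}] = 0$, so $\mathbb{E}^{op}$ is an additive inverse of $\mathbb{E}$. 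I expect the main obstacle to be the bookkeeping around $\fallingdotseq$: one must check that the structural equalities defining $\tilde{\mathbb{E}}$ as a vectorial bundle need hold only up to $\fallingdotseq$, and that the patchwise trivializations at $t = 1$ combine into a genuine morphism of vectorial bundles even though the local choices of $\mu$ and of neighborhoods vary from point to point.
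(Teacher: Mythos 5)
Your proposal is correct and is essentially the paper's own argument: your operator $h_{\alpha,t}=(h_\alpha\oplus h_\alpha)+tK_\alpha$ with $K_\alpha=\left(\begin{smallmatrix}0&\Gamma_\alpha\\ \Gamma_\alpha&0\end{smallmatrix}\right)$ is exactly the paper's $h\otimes\mathrm{id}_F+\epsilon\otimes\eta$ on $E\otimes F\cong E\oplus E^\vee$ written out in matrix form, and the endpoint identifications (direct sum at $t=0$, invertible hence trivial at $t=1$) match. The only difference is presentational (explicit matrices versus tensoring with the object $(F,\eta)$ over $I$), plus your extra remarks on the $\fallingdotseq$ bookkeeping, which are apt but not a divergence in method.
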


\begin{proof}
The addition in $\KF(X)$ is given by the direct sum of vector bundles, and the inverse by reversing the $\Z_2$-grading in vector bundles. Then the present lemma will be clear, except the consistency of the definition of the inverse. To see it, we define $(F, \eta) \in \HF(I)$ by taking $F = F^0 \oplus F^1$ to be $F^i = I \times \C$ and $\eta : F \to F$ to be $\eta = \left( \begin{array}{cc} 0 & t \\ t & 0 \end{array} \right)$. We multiply $(E, h) \in \HF(X)$ by $(F, \eta)$ to get $(E \otimes F, h \otimes \mathrm{id}_F + \epsilon \otimes \eta) \in \HF(X \times I)$, where $\epsilon : E \to E$ acts on the even part $E^0$ of $E = E^0 \oplus E^1$ by $1$ and the odd part $E^1$ by $-1$. Then, as a homotopy, the object above connects the trivial object in $\HF(X)$ with $(E, h) \oplus (E^\vee, h^\vee)$, where $(E^\vee, h^\vee)$ stands for $(E, h)$ with its $\Z_2$-grading reversed. We can readily globalize this construction, so that the inverse is well-defined. 
\end{proof}

For a $\Z_2$-graded vector bundle $E$ over $X$, we can construct a vectorial bundle over $X$ by taking an open cover $\U$ of $X$ to be $X$ itself and a Hermitian map $h : E \to E$ of degree 1 to be $h = 0$. This construction of vectorial bundles induces a well-defined homomorphism $K(X) \to \KF(X)$. The following result of Furuta will be used in Subsection \ref{subsec:property_of_fda}, and its proof is included in Appendix.

\begin{thm}[\cite{F1}] \label{thm:Furuta}
If $X$ is compact, then $K(X) \to \KF(X)$ is bijective.
\end{thm}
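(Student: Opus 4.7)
The plan is to construct an inverse $\Phi : \KF(X) \to K(X)$ by spectral truncation. Given a vectorial bundle $\mathbb{E} = (\U, (E_\alpha, h_\alpha), \phi_{\alpha\beta})$, each fiber $(E_\alpha)_x$ is finite-dimensional, so the spectrum of $(h_\alpha^2)_x$ is a finite set of nonnegative reals; for any $x$ and any $\mu > 0$ off that set, continuity of the eigenvalues gives a neighborhood of $x$ on which $\mu$ remains outside the spectrum. Compactness of $X$ then yields, after a refinement of $\U$, a positive number $\mu_\alpha$ for each member $U_\alpha$ of the refined cover such that $\mu_\alpha$ avoids the spectrum of $h_\alpha^2$ at every point of $U_\alpha$.

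With this spectral gap data in hand, the spectral projection onto eigenvalues $< \mu_\alpha$ defines a $\Z_2$-graded Hermitian vector subbundle $(E_\alpha)_{<\mu_\alpha} \subset E_\alpha$ of finite rank. The definition of the equivalence relation $\fallingdotseq$ is tailored so that the transition maps $\phi_{\alpha\beta}$ restrict to strict (not merely up-to-$\fallingdotseq$) vector bundle isomorphisms between these spectral subbundles on overlaps, once $\mu_\alpha$ and $\mu_\beta$ are replaced by their minimum on $U_\alpha \cap U_\beta$. The eigenspaces for eigenvalues in the interval $[\min(\mu_\alpha,\mu_\beta), \max(\mu_\alpha,\mu_\beta))$, on which $h$ is invertible and therefore pairs equal-rank even and odd summands via an odd self-adjoint isomorphism, contribute trivially in $K$-theory. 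Patching then yields a well-defined class $[\mathbb{E}_{<\mu}] \in K(X)$, and the same construction carried out on $X \times I$ shows homotopy invariance, so $\Phi : \KF(X) \to K(X)$ is a bona fide homomorphism.

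It remains to check the two compositions. For $\Phi \circ \alpha$: the vectorial bundle $(E,0)$ coming from a genuine $\Z_2$-graded vector bundle $E$ has $h \equiv 0$ and spectrum $\{0\}$, so $(E,0)_{<\mu} = E$ for any $\mu > 0$, and the composition is the identity. For $\alpha \circ \Phi$: one must exhibit a homotopy in $\cKF(X)$ from $\mathbb{E}$ to $(\mathbb{E}_{<\mu},0)$. Locally, decompose $E_\alpha = (E_\alpha)_{<\mu_\alpha} \oplus (E_\alpha)_{>\mu_\alpha}$ with respect to the spectral decomposition of $h_\alpha^2$, and interpolate $h_t = (1-t)\,h_\alpha|_{<\mu_\alpha} \oplus h_\alpha|_{>\mu_\alpha}$ for $t \in [0,1]$; the eigenvalues of $(h_t)^2$ on the first summand stay strictly below $\mu_\alpha$, so the spectral gap is preserved and this defines a vectorial bundle on $X \times I$ connecting $\mathbb{E}$ to $(E, 0 \oplus h|_{>\mu})$. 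At the endpoint, the summand $(E_\alpha)_{>\mu_\alpha}$ with $h_\alpha|_{>\mu_\alpha}$ has $h$ invertible, so its spectral subspace below any small $\nu > 0$ is empty and every endomorphism is $\fallingdotseq$-equivalent to zero; this summand is therefore isomorphic in $\cKF$ to the zero vectorial bundle, leaving the endpoint isomorphic to $(\mathbb{E}_{<\mu},0)$.

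The principal obstacle is the spectral gap lemma together with the gluing step, namely verifying that local vector bundles built with patch-dependent thresholds $\mu_\alpha$ assemble into a single well-defined class in $K(X)$, given that a uniform global $\mu$ need not exist (as is already clear from simple one-parameter examples where the eigenvalues sweep out an entire interval). This is precisely where the relation $\fallingdotseq$ does its essential work, absorbing the invertible-$h$ parts that would otherwise obstruct strict gluing, and it explains why vectorial bundles are a \emph{bona fide} generalization of vector bundles whose class groups nevertheless coincide in the compact case.
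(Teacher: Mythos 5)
Your overall strategy---inverting $\alpha$ by spectral truncation---is the right one, and the spectral-gap observation and the two composition checks are in the spirit of the paper's argument. But there is a genuine gap at the decisive step, the sentence ``Patching then yields a well-defined class $[\mathbb{E}_{<\mu}]\in K(X)$.'' The local bundles $(E_\alpha)_{<\mu_\alpha}$ have different ranks on overlaps, and $\phi_{\alpha\beta}$ identifies only the sub-bundles below $\min(\mu_\alpha,\mu_\beta)$ --- and even that only after a \emph{uniform} spectral bound $\lambda$ has been extracted by compactness, since the identities $\phi_{\alpha\beta}\phi_{\beta\alpha}\fallingdotseq 1$ and the cocycle condition hold only below a threshold that varies from point to point and is a priori unrelated to your $\mu_\alpha$. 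Observing that the leftover pieces on $[\min(\mu_\alpha,\mu_\beta),\max(\mu_\alpha,\mu_\beta))$ carry an invertible odd $h$ and hence ``contribute trivially in $K$-theory'' does not produce a global class: $K$-theory is not a sheaf, local classes that merely agree on overlaps do not glue, and a clutching construction would require coherent trivializations of those leftover pieces on all multiple overlaps, which you have not supplied. The obstruction is real: the object obtained by gluing the truncations fiberwise has jumping rank and is not locally trivial.

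The paper's proof supplies exactly the missing device, a global stabilization. After fixing a uniform $\lambda$, it forms the fiberwise space $(E)_x=\coprod_{\alpha\in\A(x)}(V_\alpha,(h_\alpha)_x)_{<\lambda}/\!\sim$, accepts that this is not a bundle, and then, using partitions of unity and functional calculus, constructs surjections $(g^i)_x:(E^i)_x\oplus\bigoplus_\alpha V_\alpha^1\to(E^1)_x$ and sets $F^i=\Ker(g^i)$. The dimension of $(F^i)_x$ is locally constant because $\dim(E^0)_x-\dim(E^1)_x$ is, and local triviality is verified by comparing with $\Ker(g^i_{\alpha_0})_x$ inside the fixed space $V_{\alpha_0}^i\oplus\bigoplus_\alpha V_\alpha^1$. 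A Hermitian map $h$ on $F=F^0\oplus F^1$ is then built so that $(F,h)\cong\mathbb{E}$ in $\cKF(X)$, giving surjectivity of $K(X)\to\KF(X)$; injectivity follows by running the same construction over $X\times[0,1]$. To repair your write-up you must replace the ``patching'' sentence by such a stabilization (or an equivalent one); the rest of your argument, including the homotopy $h_t$ contracting $h$ to $0$ on the low spectral part, only makes sense once $\Phi$ actually exists.
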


\begin{rem}
As Definition \ref{dfn:category_HF} works without $\Z_2$-grading, the vectorial bundles in Definition \ref{dfn:vectorial_bundle} should be called \textit{$\Z_2$-graded} vectorial bundles. However, we drop the adjective ``$\Z_2$-graded'', since ungraded ones will not appear in this paper.
\end{rem}


\subsection{Twisted vectorial bundle}

\begin{dfn}
Let $X$ be a topological space, $P \to X$ a principal $PU(\H)$-bundle, and $U \subset X$ a subset.

\begin{enumerate}

\item[(a)]
We define the category $\P(U)$ as follows. The objects in $\P(U)$ consist of sections $s : U \to P|_U$. The morphisms in $\P(U)$ are defined by
$$
\Hom_{\P(U)}(s, s') = \{ g : U \to U(\H) |\ s' \pi(g) = s \},
$$
where $\pi : PU(\H) \to U(\H)$ is the projection. The composition of morphisms is defined by the pointwise multiplication.

\item[(b)]
We define the category $\HF_P(U)$ as follows. The objects in $\HF_P(U)$ are the same as those in $\P(U) \times \HF(U)$:
$$
\mathrm{Obj} (\HF_P(U)) =
\mathrm{Obj} (\P(U)) \times \mathrm{Obj} (\HF(U)).
$$
The homomorphisms in $\HF_P(U)$ are defined by:
\begin{multline*}
Hom_{\HF_P(U)}((s, (E, h)), (s', (E', h'))) \\
=
\Hom_{\P(U)}(s, s') \times \Hom_{\HF(U)}((E, h), (E', h')) / \sim,
\end{multline*}
where the equivalence relation $\sim$ identifies $(g, \phi)$ with $(g \zeta, \phi \zeta)$ for any $U(1)$-valued map $\zeta : U \to U(1)$.
\end{enumerate}
\end{dfn}

\begin{dfn} \label{dfn:twisted_vectorial_bundle}
Let $X$ be a paracompact space, and $P \to X$ a principal $PU(\H)$-bundle. We define the category $\cKF_P(X)$ as follows. 

\begin{enumerate}
\item
An object $(\U, \mathcal{E}_\alpha, \Phi_{\alpha \beta})$ in $\cKF(X)$ consists of an open cover $\U = \{ U_\alpha \}_{\alpha \in \A}$ of $X$, objects $\mathcal{E}_\alpha$ in $\HF_P(U_\alpha)$, and homomorphisms $\Phi_{\alpha \beta} : \mathcal{E}_\beta \to \mathcal{E}_\alpha$ in $\HF_P(U_{\alpha \beta})$ such that:
$$
\begin{array}{rcll}
\Phi_{\alpha \beta} \Phi_{\beta \alpha} & = & 1 & 
\mbox{in $\HF_P(U_{\alpha \beta})$}; \\
\Phi_{\alpha \beta} \Phi_{\beta \gamma} & = & \Phi_{\alpha \gamma} &
\mbox{in $\HF_P(U_{\alpha \beta \gamma})$}.
\end{array}
$$
We call an object in the category $\cKF_P(X)$ a \textit{twisted vectorial bundle} over $X$ twisted by $P$, or a \textit{$P$-twisted vectorial bundle} over $X$. 

\item
A homomorphism $(\{ U'_{\alpha'} \}, \mathcal{E}'_{\alpha'}, \Phi'_{\alpha' \beta'}) \to (\{ U_\alpha \}, \mathcal{E}_\alpha, \Phi_{\alpha \beta})$ consists of homomorphisms $\Psi_{\alpha \alpha'} : \mathcal{E}'_{\alpha'} \to \mathcal{E}_\alpha$ in $\HF_P(U_\alpha \cap U'_{\alpha'})$ such that the following diagrams commute in $\HF_P(U_\alpha \cap U'_{\alpha'} \cap U_{\beta'})$ and $\HF_P(U_\alpha \cap U_\beta \cap U'_{\alpha'})$, respectively.
$$
\xymatrix{
\mathcal{E}'_{\alpha'} 
\ar[r]^{\Psi_{\alpha \alpha'}} & 
\mathcal{E}_\alpha \\
\mathcal{E}'_{\beta'} 
\ar[ru]_{\Psi_{\alpha \beta'}} \ar[u]^{\Phi'_{\alpha' \beta'}} &
}
\quad \quad \quad \quad 
\xymatrix{
 & \mathcal{E}_\alpha \\
\mathcal{E}'_{\beta'} 
\ar[ru]^{\Psi_{\alpha \beta'}} \ar[r]_{\Psi_{\beta \beta'}} &
\mathcal{E}_\beta \ar[u]_{\Phi_{\alpha \beta}} 
}
$$
\end{enumerate}
\end{dfn}

\medskip

It may be helpful to give a more explicit description than that in the definition above. We can describe a twisted vectorial bundle as the data
$$
(\U, s_\alpha, g_{\alpha \beta}, (E_\alpha, h_\alpha), \phi_{\alpha \beta})
$$
consisting of:
\begin{list}{$\bullet$}{\topsep=4pt}
\item
an open cover $\U = \{ U_\alpha \}$ of $X$;

\item
local sections $s_\alpha : U_\alpha \to P|_{U_\alpha}$;

\item
lifts $g_{\alpha \beta} : U_{\alpha \beta} \to U(\H)$ of the transition functions $\bar{g}_{\alpha \beta}$;

\item
$\Z_2$-graded Hermitian vector bundles $E_\alpha \to U_\alpha$ of finite rank;

\item
Hermitian maps $h_\alpha : E_\alpha \to E_\alpha$ of degree 1;

\item
maps $\phi_{\alpha \beta} : E_\beta|_{U_{\alpha \beta}} \to E_\alpha|_{U_{\alpha \beta}}$ such that $h_\alpha \phi_{\alpha \beta} = \phi_{\alpha \beta} h_\beta$ and:
$$
\begin{array}{rcll}
\phi_{\alpha \beta} \phi_{\beta \alpha} & \fallingdotseq & 1 & 
\mbox{on $U_{\alpha \beta}$}; \\
\phi_{\alpha \beta} \phi_{\beta \gamma} & \fallingdotseq & 
z_{\alpha \beta \gamma} \phi_{\alpha \gamma} &
\mbox{on $U_{\alpha \beta \gamma}$}.
\end{array}
$$
\end{list}
In the above, the transition function $\bar{g}_{\alpha \beta} : U_{\alpha \beta} \to PU(\H)$ is defined by $s_\alpha \bar{g}_{\alpha \beta} = s_\beta$. A lift $g_{\alpha \beta}$ of $\bar{g}_{\alpha \beta}$ means a function $g_{\alpha \beta} : U_{\alpha \beta} \to U(\H)$ such that $\pi \circ g_{\alpha \beta} = \bar{g}_{\alpha \beta}$. The function $z_{\alpha \beta \gamma} : U_{\alpha \beta \gamma} \to U(1)$ is defined by $g_{\alpha \beta} g_{\beta \gamma} = z_{\alpha \beta \gamma} g_{\alpha \gamma}$.

\medskip

Note that the data $s_\alpha, g_{\alpha \beta}$ of $P$ are crucial in considering isomorphisms classes of twisted vectorial bundles.

\begin{dfn}
We denote by $\KF_P(X)$ the homotopy classes of twisted vectorial bundles over $X$ twisted by $P$.
\end{dfn}

The notion of \textit{homotopies} of $P$-twisted vectorial bundles over $X$ is formulated by using $(P \times I)$-twisted vectorial bundles over $X \times I$. As in the case of $\KF(X)$, the set $\KF_P(X)$ gives rise to an abelian group. Clearly, if $P$ is trivial, then a trivialization $P \cong X \times PU(\H)$ induces an isomorphism $KF_P(X) \cong KF(X)$.

\medskip

\begin{rem}
Consider the following property of a topological space $X$:
\begin{itemize}{}
\item[(L)]
For any principal $PU(\H)$-bundle $P \to X$ and an open cover of $X$, there is a refinement $\U = \{ U_\alpha \}$ of the cover such that we can find local trivializations $s_\alpha : U_\alpha \to P|_{U_\alpha}$ and lifts $g_{\alpha \beta}$ of the transition functions $\bar{g}_{\alpha \beta}$. 
\end{itemize}
As $P$ is locally trivial, the existence of lifts $g_{\alpha \beta}$ matters only. In general, paracompact spaces have the property (L). Thus, through this property, the paracompactness assumption in Definition \ref{dfn:twisted_vectorial_bundle} ensures that $\KF_P(X)$ is non-empty.
\end{rem}

\begin{rem}
The assignment of $\P(U)$ to each open set $U \subset X$ gives a \textit{$\underline{U(1)}$-gerbe} over $X$, where $\underline{U(1)}$ is the sheaf of germs of $U(1)$-valued functions. In general, for a $\underline{U(1)}$-gerbe $\mathcal{G}$, we can construct a category $\cKF_{\mathcal{G}}(X)$ similar to $\cKF_{P}(X)$. On a manifold $X$, the assignment $U \to \cKF_{\mathcal{G}}(U)$ becomes a \textit{stack} and gives the generalization of Brylinski's 2-vector bundle mentioned in Section \ref{sec:introduction}.
\end{rem}


\section{Cohomology theory $\KF$}
\label{sec:cohomology_KF}

By means of $\KF_P(X)$, we construct in this section a certain generalized cohomology theory similar to twisted $K$-cohomology theory. Then we describe and prove some basic properties.


\subsection{Construction}

Let $X$ be a paracompact space, and $P \to X$ a principal $PU(\H)$-bundle. We define the \textit{support} of a twisted vectorial bundle 
$$
\mathbb{E} = 
(\U, s_\alpha, g_{\alpha \beta}, (E_\alpha, h_\alpha), \phi_{\alpha \beta}) 
\in \cKF_P(X)
$$ 
to be:
$$
\mathrm{Supp} \mathbb{E} 
=
\overline{
\{ x \in X |\ \mbox{$(h_\alpha)_x$ is not invertible for some $\alpha$ } \}
}.
$$
For a (closed) subspace $Y \subset X$, we denote by $\cKF_P(X, Y)$ the full subcategory in $\cKF_P(X)$ consisting of objects $\mathbb{E}$ such that $\mathrm{Supp}\mathbb{E} \cap Y = \emptyset$. Then we define $\KF_P(X, Y)$ to be the homotopy classes of objects in $\cKF_P(X, Y)$, where homotopies are given by objects in $\cKF_{P \times I}(X \times I, Y \times I)$. For $n \ge 0$, we put:
$$
\KF_P^{-n}(X, Y) =
\KF_{P \times I^n}(X \times I^n, Y \times I^n \cup X \times \partial I^n).
$$
We also put $\KF_P^1(X, Y) = \KF_P^{-1}(X, Y)$. By means of the pull-back, a morphism $(f, F) : (X', Y'; P') \to (X, Y; P)$ in $\widehat{\cC}$ clearly induces a homomorphism $(f, F)^* : \KF_P^n(X, Y) \to \KF_{P'}^n(X', Y')$. In the case of $P' = f^*P$ and $F = \widehat{f}$, we will write $f^*$ for the induced homomorphism.

\begin{prop} \label{prop:cohomology_KF}
The assignment of $\{ \KF_P^n(X, Y) \}_{n \le 1}$ to $(X, Y; P) \in \widehat{\cC}$ has the following properties:
\begin{enumerate}
\item (Homotopy axiom)
If $(f_i, F_i) : (X', Y'; P') \to (X, Y; P)$, ($i = 0, 1$) are homotopic, then the induced homomorphisms coincide: $(f_0, F_0)^* = (f_1, F_1)^*$.

\item (Excision axiom)
For subcomplexs $A, B \subset X$, the inclusion map induces the isomorphism:
$$
\KF_{P|_{A \cup B}}^n(A \cup B, B) \cong 
\KF_{P|_{A}}^n(A, A \cap B). 
$$

\item (``Exactness'' axiom)
There is the natural complex of groups:
$$
\hspace{-0.5cm}
\cdots \to
\KF^{-1}_{P|_Y}(Y) \overset{\delta_{-1}}{\to}
\KF^{0}_P(X, Y) \to
\KF^{0}_P(X) \to
\KF^{0}_{P|_Y}(Y) \overset{\delta_{0}}{\to}
\KF^{1}_P(X, Y).
$$
This complex is exact except at the term $\KF^{0}_{P|_Y}(Y)$.

\item (Additivity axiom)
For a family $\{ (X_\lambda, Y_\lambda; P_\lambda) \}_{\lambda \in \Lambda}$ in $\widehat{\cC}$, the inclusion maps $X_\lambda \to \coprod_\lambda X_\lambda$ induce the natural isomorphism:
$$
\KF_{\coprod_\lambda P_\lambda}^{-n}
(\tcoprod_\lambda X_\lambda, \tcoprod_\lambda Y_\lambda)
\cong
\prod_\lambda 
\KF_{P_\lambda}^{-n}(X_\lambda, Y_\lambda). 
$$
\end{enumerate}
\end{prop}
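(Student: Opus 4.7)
The plan is to verify the four axioms directly from the definitions. The homotopy and additivity axioms follow with very little effort. A homotopy $H = (\tilde f, \tilde F) : (X' \times I, Y' \times I; P' \times I) \to (X, Y; P)$ between $(f_0, F_0)$ and $(f_1, F_1)$ pulls back any $\mathbb{E} \in \cKF_P(X, Y)$ to $H^*\mathbb{E} \in \cKF_{P' \times I}(X' \times I, Y' \times I)$, and this object is precisely the homotopy witnessing $(f_0, F_0)^* \mathbb{E} = (f_1, F_1)^* \mathbb{E}$ in $\KF^0_{P'}(X', Y')$; the relative and suspended cases are identical. Additivity is clear because the data defining a twisted vectorial bundle and a morphism are local, so $\cKF_{\tcoprod P_\lambda}(\tcoprod X_\lambda)$ decomposes as $\prod_\lambda \cKF_{P_\lambda}(X_\lambda)$, and homotopies decompose the same way.

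For the excision axiom, the restriction $\KF^n_{P|_{A \cup B}}(A \cup B, B) \to \KF^n_{P|_A}(A, A \cap B)$ is well-defined because the support condition is preserved. For its inverse, take $\mathbb{E} = (\U, s_\alpha, g_{\alpha \beta}, (E_\alpha, h_\alpha), \phi_{\alpha \beta})$ over $A$ with $\mathrm{Supp}\,\mathbb{E} \cap (A \cap B) = \emptyset$, and use normality of $A \cup B$ to separate $\mathrm{Supp}\,\mathbb{E}$ from $B$ by an open neighborhood $W$. Refine the cover so that each chart meeting $A \setminus W$ has $h_\alpha$ invertible on it; such objects are $\fallingdotseq$-equivalent to the zero object in $\HF_P$, hence may be replaced by zero up to isomorphism. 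Then extend the cover to all of $A \cup B$ by appending charts in $B \setminus \overline{\mathrm{Supp}\,\mathbb{E}}$ carrying the zero object with zero transitions, using local trivializations of $P|_B$. This yields an object of $\cKF_{P|_{A \cup B}}(A \cup B, B)$ that restricts to $\mathbb{E}$ on $A$, and a parameter-dependent version shows the construction descends to homotopy classes.

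For exactness, first fix a collar $V \cong Y \times [0, 1)$ of $Y$ in $X$, available because $(X, Y)$ is a CW pair. A class in $\KF^{-n-1}_{P|_Y}(Y)$ is represented by a twisted vectorial bundle $\mathbb{E}$ over $Y \times I^{n+1}$ with support disjoint from $Y \times \partial I^{n+1}$; after identifying the last $I$-factor with the collar parameter, $\mathbb{E}$ extends to a twisted vectorial bundle over $V \times I^n$ whose support is disjoint from $Y \times I^n \cup V \times \partial I^n$. Gluing this with the zero object over $(X \setminus V) \times I^n$ produces a representative of $\delta_{-n-1}[\mathbb{E}] \in \KF^{-n}_P(X, Y)$. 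Standard collar-uniqueness arguments give independence of choices and the homomorphism property. Exactness at $\KF^{-n}_P(X, Y)$, at $\KF^{-n}_P(X)$ for $n \ge 0$, and at $\KF^{-n}_{P|_Y}(Y)$ for $n \ge 1$ then reduces to a familiar cofibration-sequence manipulation: any null homotopy in an adjacent group supplies the extra $I$-direction needed to lift a class along the next map, because there is always an unused suspension coordinate in which to absorb the homotopy.

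The main obstacle, and the reason exactness cannot be asserted at $\KF^0_{P|_Y}(Y)$, is that the sequence terminates at $\KF^1_P(X, Y)$: given $[\mathbb{E}] \in \KF^0_{P|_Y}(Y)$ with $\delta_0[\mathbb{E}] = 0$, turning the null homotopy into an extension of $\mathbb{E}$ to a bundle over all of $X$ would require an additional suspension coordinate that is simply not available. In twisted $K$-theory this gap is bridged by Bott periodicity, which is unavailable for $\KF$ at this point in the paper; indeed, the eventual equivalence with twisted $K$-theory (Theorem \ref{ithm:main}) is what ultimately supplies it. One nonetheless checks that the composition $\KF^0_P(X) \to \KF^0_{P|_Y}(Y) \overset{\delta_0}{\to} \KF^1_P(X, Y)$ vanishes — the boundary of the restriction of a bundle on $X$ clutches trivially against the bundle itself — so the sequence is at least a complex at this last interior term, which is all that is claimed.
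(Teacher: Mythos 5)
Your treatment of the homotopy, additivity, and excision axioms is essentially the paper's: the first two are immediate from the definitions, and for excision the paper likewise glues the given object with the zero object over the complement of its support (its Lemma~\ref{lem:Meyer-Vietoris}), so that part is fine. The exactness axiom, however, has two genuine problems.

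First, your construction of $\delta_{-n-1}$ rests on ``fix a collar $V \cong Y \times [0,1)$ of $Y$ in $X$, available because $(X,Y)$ is a CW pair.'' That is false: a CW pair is an NDR pair (equivalently, $Y \hookrightarrow X$ is a cofibration), but $Y$ need not have a product neighborhood --- already $X = D^2$, $Y = \{0\}$ has no neighborhood homeomorphic to $Y \times [0,1)$. The paper avoids collars entirely: it proves exactness for a triple (Lemma~\ref{lem:triple}) using the Mayer--Vietoris gluing together with the homotopy extension property, and then defines $\delta_{-n}$ by composing the excision/homotopy identifications of Lemmas~\ref{lem:iso_easy} and~\ref{lem:iso_difficult} with a restriction map on a subspace of $X \times I^n$. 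Your ``familiar cofibration-sequence manipulation'' is precisely this content, and it is where all the work lies; asserting it does not discharge it.

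Second, and more seriously, you never construct $\delta_0 : \KF^0_{P|_Y}(Y) \to \KF^1_P(X,Y)$. Your collar recipe consumes one suspension coordinate, producing $\delta_{-n-1} : \KF^{-n-1}_{P|_Y}(Y) \to \KF^{-n}_P(X,Y)$ only for $n \ge 0$; starting from $\KF^0(Y)$ there is no coordinate to consume, and $\KF^1$ is by definition just $\KF^{-1}$, so the map must raise degree. The paper's solution is to first apply the external product $\beta$ with the Thom class $(F,T)$ over $(D^2, S^1)$, giving $\beta : \KF^0 \to \KF^{-2}$, and to set $\delta_0 = \delta_{-2} \circ \beta$; the vanishing $\delta_0 \circ i^* = 0$ then follows from naturality of $\beta$ and exactness already established in degree $-2$. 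Your one-line justification (``the boundary of the restriction of a bundle on $X$ clutches trivially against the bundle itself'') is not a proof and, without a definition of $\delta_0$, is not even a well-posed claim. You do correctly identify why exactness at $\KF^0_{P|_Y}(Y)$ cannot be asserted, which matches the paper's remark, but the missing construction of $\delta_0$ and of the vanishing $\delta_0 \circ i^* = 0$ is a real gap.
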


The homotopy axiom and the additivity axiom follow directly from the definition of $\KF_P(X, Y)$. The excision axiom and the ``exactness'' axiom will be shown in the following subsections. 

\medskip

\begin{rem}
The Bott periodicity for $\KF_P^{-n}(X, Y)$ is not yet established at this stage. This is the reason that the ``exactness'' axiom in Proposition \ref{prop:cohomology_KF} is formulated partially. At the end, the periodicity will turn out to hold, and we will obtain the complete exactness axiom.
\end{rem}

\begin{rem}
A generalization of $\KF_P(X)$ is given by incorporating actions of Clifford-algebra bundles into vectorial bundles. Another generalization is to use real vector bundles with inner product instead of Hermitian vector bundles. These generalizations also satisfy properties similar to those in Proposition \ref{prop:cohomology_KF}.
\end{rem}


\subsection{Excision axiom}

We here prove the excision axiom in Proposition \ref{prop:cohomology_KF}. For untwisted $\KF(X, Y)$, the excision theorem is shown in \cite{F1}. The following argument is essentially the same as that used in the untwisted case.

\begin{lem}[Meyer-Vietoris construction] \label{lem:Meyer-Vietoris}
Let $X$ be a paracompact space, $P \to X$ a principal $PU(\H)$-bundle, and $U, V \subset X$ open subsets such that $U \cap V \neq \emptyset$. If $\mathbb{E} \in \cKF_{P|_U}(U)$ and $\mathbb{F} \in \cKF_{P|_V}(V)$ are isomorphic on $U \cap V$, then there is $\mathbb{G} \in \cKF_{P|_{U \cup V}}(U \cup V)$ such that $\mathbb{G}|_U \cong \mathbb{E}$ and $\mathbb{G}|_V \cong \mathbb{F}$.
\end{lem}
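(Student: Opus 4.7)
The plan is the natural Meyer--Vietoris gluing, adapted to the twisted vectorial setting: I would assemble $\mathbb{G}$ by forming the disjoint union of the indexing data of $\mathbb{E}$ and $\mathbb{F}$ and using the supplied isomorphism on $U \cap V$ to furnish the mixed transition morphisms. The argument is largely an unwinding of Definition \ref{dfn:twisted_vectorial_bundle}; the only subtlety is keeping track of the auxiliary trivializing sections $s_\alpha : U_\alpha \to P$ and the unitary lifts $g_{\alpha\beta}$ of the transition functions that accompany each local datum in the twisted case.

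Concretely, writing $\mathbb{E} = (\{U_\alpha\}_{\alpha \in \A}, \mathcal{E}_\alpha, \Phi_{\alpha\beta})$ and $\mathbb{F} = (\{V_{\alpha'}\}_{\alpha' \in \A'}, \mathcal{F}_{\alpha'}, \Phi'_{\alpha'\beta'})$, and letting $\Psi : \mathbb{E}|_{U \cap V} \to \mathbb{F}|_{U \cap V}$ denote the given isomorphism with inverse $\Theta$, I would record their data as morphisms $\Psi_{\alpha' \alpha} : \mathcal{E}_\alpha \to \mathcal{F}_{\alpha'}$ and $\Theta_{\alpha \alpha'} : \mathcal{F}_{\alpha'} \to \mathcal{E}_\alpha$ in $\HF_P(U_\alpha \cap V_{\alpha'})$. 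The vectorial bundle $\mathbb{G}$ on $U \cup V$ is then defined by the index set $\A \sqcup \A'$, the open cover $\{U_\alpha\} \cup \{V_{\alpha'}\}$, the same local objects $\mathcal{E}_\alpha$ and $\mathcal{F}_{\alpha'}$, and the transition morphisms $\Phi_{\alpha\beta}$, $\Phi'_{\alpha' \beta'}$, $\Theta_{\alpha \alpha'}$, $\Psi_{\alpha' \alpha}$ on the respective pairwise intersections. The cocycle identities decompose into three blocks: the pure $\A$- and $\A'$-cocycles are inherited from $\mathbb{E}$ and $\mathbb{F}$; the mixed conditions on $U_\alpha \cap U_\beta \cap V_{\alpha'}$ and on $U_\alpha \cap V_{\alpha'} \cap V_{\beta'}$ are precisely the commuting triangles required of homomorphisms in $\cKF_P$ by Definition \ref{dfn:twisted_vectorial_bundle}(2), applied to $\Psi$ and to $\Theta$; and the two-sided identities $\Theta \Psi = 1$, $\Psi \Theta = 1$ are exactly the hypothesis that $\Psi$ is an isomorphism.

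To conclude, I would produce $\mathbb{G}|_U \cong \mathbb{E}$ by comparing covers: $\mathbb{G}|_U$ is represented on $\{U_\alpha\}_{\alpha \in \A} \cup \{V_{\alpha'} \cap U\}_{\alpha' \in \A'}$, and a homomorphism $\mathbb{G}|_U \to \mathbb{E}$ is prescribed by the identity on the $U_\alpha$-chart paired with $U_\alpha$, by $\Phi_{\beta \alpha}$ on the $U_\alpha$-chart paired with $U_\beta$, and by $\Theta_{\beta \alpha'}$ on the $(V_{\alpha'} \cap U)$-chart paired with $U_\beta$; the analogous choices built from $\Psi$ supply a two-sided inverse. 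All compatibility diagrams again collapse to the cocycle identities for $\mathbb{E}$ together with the fact that $\Psi$ and $\Theta$ are mutually inverse homomorphisms. The verification of $\mathbb{G}|_V \cong \mathbb{F}$ is symmetric.

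The hard part is purely bookkeeping: I need to ensure that the compositions being formed in $\HF_P$ live over the same local section of $P$, but this is precisely what the equivalence relation $\sim$ in $\Hom_{\HF_P}$ absorbs, since the $U(1)$-ambiguity in the lifts $g_{\alpha\beta}$ is quotiented out. Once that observation is made, the construction is a direct transcription of the untwisted Meyer--Vietoris argument in \cite{F1}, and no new ideas beyond those of Furuta are required.
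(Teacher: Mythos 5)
Your construction is exactly the one the paper uses: take the union of the two covers, keep the local objects, and let the given isomorphism and its inverse supply the mixed transition morphisms, with all cocycle and compatibility conditions reducing to the hypotheses. The paper states this only as a brief sketch, so your more explicit verification of the block decomposition of the cocycle identities and of the restrictions $\mathbb{G}|_U \cong \mathbb{E}$, $\mathbb{G}|_V \cong \mathbb{F}$ is a correct filling-in of the same argument.
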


\begin{proof}
Suppose that $\mathbb{E} = (\mathcal{U}, \mathcal{E}_\alpha, \Phi_{\alpha \alpha'})$ and $\mathbb{F} = (\mathcal{V}, \mathcal{F}_\beta, \Psi_{\beta \beta'})$. We can construct the object $\mathbb{G} = ( \mathcal{W}, \mathcal{G}_\gamma, \Upsilon_{\gamma \gamma'})$ as follows. We let $\mathcal{W}$ be the open cover of $U \cup V$ consisting of the open sets belonging to $\mathcal{U}$ or $\mathcal{V}$. The object $\mathcal{G}_\gamma$ is $\mathcal{E}_\alpha$ or $\mathcal{F}_\beta$. Then $\Phi_{\alpha \alpha'}$, $\Psi_{\beta \beta'}$ and the data of the isomorphism $\mathbb{E}|_{U \cap V} \cong \mathbb{F}|_{U \cap V}$ together give the morphisms $\Upsilon_{\gamma \gamma'}$.
\end{proof}

\begin{prop}[Excision axiom]
Let $X$ be a paracompact space, and $P \to X$ a principal $PU(\H)$-bundle. For an open set $U$ and closed set $Y$ such that $U \subset Y \subset X$, the inclusion $i : X - U \to X$ induces the isomorphism:
$$
i^* : \ \KF_P(X, Y) 
\overset{\cong}{\longrightarrow} \KF_{P|_{X - U}}(X - U, Y - U).
$$
\end{prop}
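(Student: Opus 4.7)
The plan is to exhibit a two-sided inverse $j$ to $i^*$, defined by extending twisted vectorial bundles on $X - U$ to $X$ via Meyer--Vietoris gluing (Lemma \ref{lem:Meyer-Vietoris}) with the rank-zero $P$-twisted vectorial bundle on an open neighborhood of $U$. The approach mirrors the untwisted case treated in \cite{F1}; the only new ingredient is that the gluing now takes place inside $\cKF_P$ rather than $\cKF$.

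The crucial local observation I will establish first is the following: \emph{if $\mathbb{G} \in \cKF_{P|_W}(W)$ has all its Hermitian maps $h_\alpha$ invertible on $W$, then $\mathbb{G}$ is isomorphic, inside $\cKF_{P|_W}(W)$, to the rank-zero twisted vectorial bundle $\mathbf{0}_W$} (which exists by property (L) applied to the paracompact open set $W$). Indeed, Definition \ref{dfn:category_HF} shows that invertibility of $h_\alpha$ at $y$ forces $(E_\alpha, h_\alpha)_{y, < \mu} = 0$ for all sufficiently small $\mu > 0$; by the relation $\fallingdotseq$, any two morphisms into or out of $(E_\alpha, h_\alpha)$ coincide, so the unique zero morphisms $(E_\alpha, h_\alpha) \rightleftarrows (0, 0)$ compose to the identity on both sides and are mutually inverse. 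Globalizing over the cover yields the asserted isomorphism.

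To construct $j$, given $\mathbb{F} \in \cKF_{P|_{X-U}}(X-U, Y-U)$ I choose an open neighborhood $W$ of $Y - U$ in $X - U$ on which every $h_\alpha$ of $\mathbb{F}$ is invertible (possible since $\mathrm{Supp}\,\mathbb{F}$ is closed in $X-U$ and disjoint from $Y - U$), write $W = W' \cap (X-U)$ for some open $W' \subset X$, and set $V = U \cup W'$. Then $V$ is open in $X$, contains $U$, and satisfies $V \cap (X - U) = W$. By the local observation, $\mathbb{F}|_W \cong \mathbf{0}_V|_W$, so Lemma \ref{lem:Meyer-Vietoris} produces $\mathbb{E} \in \cKF_P(X)$ with $\mathbb{E}|_{X-U} \cong \mathbb{F}$ and $\mathbb{E}|_V \cong \mathbf{0}_V$. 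Since $\mathrm{Supp}\,\mathbb{E} \subset \mathrm{Supp}\,\mathbb{F}$ lies in $X - U$ and is disjoint from $Y - U$, it is disjoint from $Y$, and I set $j[\mathbb{F}] = [\mathbb{E}] \in \KF_P(X, Y)$. The identity $i^* \circ j = \mathrm{id}$ is immediate. For $j \circ i^* = \mathrm{id}$ I use that any $\mathbb{E} \in \cKF_P(X, Y)$ has invertible $h_\alpha$ on an open neighborhood of $Y \supset U$; choosing $V$ inside that neighborhood, the local observation gives $\mathbb{E}|_V \cong \mathbf{0}_V$, so that both $\mathbb{E}$ and $j(i^*[\mathbb{E}])$ arise as Meyer--Vietoris gluings of the same pair $(\mathbb{E}|_{X-U}, \mathbf{0}_V)$ and are therefore isomorphic.

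The main obstacle I anticipate is verifying that $j$ descends to homotopy classes: independent choices of $W$, $W'$, and of the isomorphism $\mathbb{F}|_W \cong \mathbf{0}_V|_W$ must yield homotopic outputs, and homotopic inputs must yield homotopic outputs. Both statements reduce to rerunning the construction over $X \times I$ in place of $X$; the technical point is to choose an open neighborhood of $(Y - U) \times I$ in $(X-U) \times I$ on which the homotopy's $h_\alpha$'s are invertible, for which a tube-lemma argument using compactness of $I$ lets me take such a neighborhood of product form $W_0 \times I$. No conceptual idea beyond the local observation and Lemma \ref{lem:Meyer-Vietoris} is needed, but this bookkeeping is the most delicate part of the proof.
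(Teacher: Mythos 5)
Your proposal is correct and follows essentially the same route as the paper: both construct the inverse of $i^*$ by observing that a twisted vectorial bundle is isomorphic to the zero object wherever its Hermitian maps are invertible, and then gluing the given object on $X-U$ with the zero object on an open neighborhood of $Y$ via the Meyer--Vietoris lemma. Your explicit justification of the ``local observation'' via the relation $\fallingdotseq$ and your attention to descent to homotopy classes merely spell out details the paper leaves implicit.
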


\begin{proof}
It suffices to construct the inverse of $i^*$. Suppose that we are given $\mathbb{E} \in \cKF_{P|_{X - U}}(X - U, Y - U)$. We put $V = X - Y$ and $W = X - \mathrm{Supp}\mathbb{E}$. We let $\mathbb{O} = (\U, s_\alpha, g_{\alpha \beta}, (E_\alpha, h_\alpha), \phi_{\alpha \beta}) \in \cKF_{P|_W}(W)$ be an object such that $E_\alpha$, $h_\alpha$ and $\phi_{\alpha \beta}$ are trivial. Note that $\mathbb{O}$ represents $0 \in \KF_{P|_W}(W)$. Clearly, the support of $\mathbb{E}$ does not intersect $V \cap W$. Thus, there is a natural isomorphism $\mathbb{E}|_{V \cap W} \cong \mathbb{O}|_{V \cap W}$, so that Lemma \ref{lem:Meyer-Vietoris} provides us an object $\tilde{\mathbb{E}} \in \cKF_P(X)$ such that $\mathrm{Supp}\tilde{\mathbb{E}} \cap Y = \emptyset$ and $\tilde{\mathbb{E}}|_{X - U} \cong \mathbb{E}$. Note that the construction in Lemma \ref{lem:Meyer-Vietoris} is natural. Hence the construction of $\tilde{\mathbb{E}}$ above behaves naturally with respect to the pull-back. Consequently, the assignment $\mathbb{E} \mapsto \tilde{\mathbb{E}}$ induces a well-defined map $\KF_{P|_{X-U}}(X - U, Y - Y) \to \KF_P(X, Y)$, giving the inverse to $i^*$.
\end{proof}

Now, the excision axiom in Proposition \ref{prop:cohomology_KF} follows from the proposition above: Setting $X = A \cup B$, $Y = B$ and $U = B - A \cap B$, we get $\KF_P(A \cup B, B) \cong \KF_{P|_A}(A, A \cap B)$, which leads to $KF^n_P(A \cup B, B) \cong KF^n_{P|_A}(A, A \cap B)$, ($n < 0$).


\subsection{Exactness axiom}
\label{subsec:exactness_axiom}

We show the ``exactness'' axiom in Proposition \ref{prop:cohomology_KF} in a way similar to that used in \cite{A}. To define the connecting homomorphism $\delta_{-n}$, we begin with:

\begin{lem} \label{lem:triple}
Let $X$ be a paracompact space, $P \to X$ a principal $PU(\H)$-bundle, and $Z \subset Y \subset X$ subspaces. If $Y \to X$ is a cofibration, then we have the exact sequence:
$$
\KF_P(X, Y) \overset{i^*}{\longrightarrow} 
\KF_P(X, Z) \overset{j^*}{\longrightarrow} 
\KF_{P|_Y}(Y, Z),
$$
where $i : (X, Z) \to (X, Y)$ and $j : (Y, Z) \to (X, Y)$ are the inclusion maps.
\end{lem}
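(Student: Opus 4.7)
The plan is to verify exactness at $\KF_P(X, Z)$ in two inclusions.

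First, $j^{*} \circ i^{*} = 0$ is easy: for $[\mathbb{E}] \in \KF_P(X, Y)$, the condition $\mathrm{Supp}\,\mathbb{E} \cap Y = \emptyset$ forces every local Hermitian map of $\mathbb{E}|_Y$ to be invertible, so by continuity of the spectrum of $h^{2}$ the subspace $(E,h)_{y, <\mu}$ vanishes on a neighborhood of each $y \in Y$ for some $\mu > 0$. Per Definition \ref{dfn:category_HF}, this forces $\mathrm{id}_{\mathbb{E}|_Y} \fallingdotseq 0$, so $\mathbb{E}|_Y \cong 0$ in $\cKF_{P|_Y}(Y)$ and therefore $j^{*} i^{*}[\mathbb{E}] = 0$ in $\KF_{P|_Y}(Y, Z)$.

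For $\mathrm{Ker}\, j^{*} \subset \mathrm{Im}\, i^{*}$, take $[\mathbb{E}] \in \KF_P(X, Z)$ with $j^{*}[\mathbb{E}] = 0$. By definition there exists $\tilde{\mathbb{E}} \in \cKF_{(P|_Y) \times I}(Y \times I, Z \times I)$ with $\tilde{\mathbb{E}}|_{Y \times \{0\}} \cong \mathbb{E}|_Y$ and $\tilde{\mathbb{E}}|_{Y \times \{1\}} \cong 0$. Set $A = X \times \{0\} \cup Y \times I \subset X \times I$. Gluing $\mathbb{E}$ and $\tilde{\mathbb{E}}$ along their prescribed agreement on $Y \times \{0\}$, by applying Lemma \ref{lem:Meyer-Vietoris} to open thickenings of the two pieces inside $X \times I$ (available from a collar neighborhood of $Y$ in $X$ that the cofibration provides), yields $\mathbb{H} \in \cKF_{(P \times I)|_A}(A)$ with $\mathrm{Supp}\,\mathbb{H} \cap (Z \times I) = \emptyset$. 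Since $A \hookrightarrow X \times I$ is also a cofibration, there is a retraction $\rho : X \times I \to A$ homotopic to $\mathrm{id}_{X \times I}$. Setting $\mathbb{G} := \rho^{*}\mathbb{H}$ gives a vectorial bundle on $X \times I$ twisted by $\rho^{*}((P \times I)|_A)$, which we identify with $P \times I$ by an isomorphism restricting to the identity over $A$. Because $\rho|_A = \mathrm{id}$, the support of $\mathbb{G}$ is disjoint from $Z \times I$ and $\mathbb{G}|_{X \times \{0\}} \cong \mathbb{E}$; because $\rho(y, 1) = (y, 1)$ for $y \in Y$ and $\tilde{\mathbb{E}}|_{Y \times \{1\}} \cong 0$, the Hermitian map of $\mathbb{G}$ is invertible on $Y \times \{1\}$, whence by openness of invertibility $\mathrm{Supp}\,\mathbb{F} \cap Y = \emptyset$ for $\mathbb{F} := \mathbb{G}|_{X \times \{1\}}$. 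Thus $[\mathbb{F}] \in \KF_P(X, Y)$, and $\mathbb{G}$ itself is a homotopy in $\cKF_{P \times I}(X \times I, Z \times I)$ connecting $\mathbb{E}$ to $\mathbb{F}$, yielding $i^{*}[\mathbb{F}] = [\mathbb{E}]$ in $\KF_P(X, Z)$.

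The main obstacle is the bookkeeping of the twist under pullback by $\rho$. While $\rho \simeq \mathrm{id}_{X \times I}$ implies $\rho^{*}((P \times I)|_A) \cong P \times I$ by homotopy invariance of principal $PU(\H)$-bundles, one needs this isomorphism to restrict to the identity over $A$ in order that $\mathbb{G}$ genuinely restrict to $\mathbb{H}$ there, with the consequences on supports used above. Such a \emph{relative} isomorphism is obtained from the homotopy extension property for the cofibration $A \hookrightarrow X \times I$, and this compatibility is the most delicate point; once it is in place, everything else is formal gluing and inspection of supports.
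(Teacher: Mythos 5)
Your argument is correct and follows essentially the same route as the paper: one checks $j^*i^*=0$ by a support argument, and for $\Ker j^*\subset \mathrm{Im}\, i^*$ one glues $\mathbb{E}$ on $X\times\{0\}$ to the null-homotopy on $Y\times I$ via Lemma \ref{lem:Meyer-Vietoris}, extends over $X\times I$ by the retraction $X\times I\to X\times\{0\}\cup Y\times I$ supplied by the cofibration (taking care, as you note, that the identification of the pulled-back twist with $P\times I$ is the identity over that subspace), and reads off the class at $t=1$. The only real difference is cosmetic: the paper performs the gluing inside $A=X\times\{0\}\cup Y\times I$ itself, using the open subsets $Y\times(0,1]$ and $X\times\{0\}\cup Y\times[0,1)$ together with the pullback of $\mathbb{E}$ under the projection onto $X\times\{0\}$, so no neighborhood of $Y$ in $X$ is needed at that step (a cofibration yields only an NDR halo, not a genuine collar); in either version one should first reparametrize the null-homotopy to be constant near $t=0$ so that the two pieces are honestly isomorphic on an open overlap, an informality your write-up shares with the paper's.
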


\begin{proof}
Clearly, $j^*i^* = 0$. Suppose that $[\mathbb{E}] \in \KF_P(X, Z)$ is such that $j^*([\mathbb{E}]) = 0$. This means that there is a homotopy $\tilde{\mathbb{F}} \in \cKF_{P \times I}(Y \times I, Z \times I)$ connecting $\mathbb{E}|_Y$ with a trivial object $\mathbb{O}$ on $Y$. Then we have an object 
$$
\mathbb{G} \in 
\cKF_{P \times [0, 1]}(X \cup \{ 0 \} \cup Y \times [0, 1], Z \times [0, 1])
$$ 
such that $\mathbb{G}|_{X \times \{ 0 \}} \cong \mathbb{E}$ and $\mathbb{G}|_{Y \times \{ 1 \}} \cong \mathbb{O}$. We can construct such an object $\mathbb{G}$ by applying Lemma \ref{lem:Meyer-Vietoris} to $\tilde{\mathbb{F}}|_{Y \times (0, 1]}$ and the pull-back of $\mathbb{E}$ under the projection $X \times \{ 0 \} \cup Y \times [0, 1) \to X \times \{ 0 \}$. Now, because $Y \to X$ is a cofibration, we have a map $\eta$ making the following diagram commutative.
$$
\xymatrix{
X \times \{ 0 \} \cup Y \times [0, 1] 
\ar@{^{(}->}[r] \ar[d]_{\mathrm{id}} &
X \times [0, 1] \ar@{-->}[ld]^-\eta \\
X \times \{ 0 \} \cup Y \times [0, 1] &
}
$$
Then $\mathbb{H} = \eta(\cdot, 1)^*\mathbb{G}$ defines $[\mathbb{H}] \in \KF_P(X, Y)$. Since the homotopy $\tilde{\mathbb{H}} = \eta^*\mathbb{G} \in \cKF_{P \times I}(X \times I, Z \times I)$ connects $\mathbb{G}|_{X \times \{ 0 \} } \cong \mathbb{E}$ with $\mathbb{H}$, we have $i^*([\mathbb{H}]) = [\mathbb{E}]$.
\end{proof}

\begin{lem} \label{lem:iso_difficult}
For $(X, Y; P) \in \widehat{\cC}$, the group $\KF_P^{-n+1}(X, Y)$ is isomorphic to 
\begin{multline*}
\KF_{P \times I^{n}}(
X \times I^{n-1} \times \{ 0 \} \cup 
(Y \times I^{n-1} \cup X \times \partial I^{n-1}) \times I, \\
Y \times I^{n-1} \times \{ 1 \} \cup
X \times \partial I^{n-1} \times I
).
\end{multline*}
\end{lem}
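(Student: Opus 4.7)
The plan is to show that the projection $\pi\colon A \to X \times I^{n-1}$, $(x,s,u)\mapsto(x,s)$, induces the desired isomorphism by pull-back. Writing $Z := Y \times I^{n-1} \cup X \times \partial I^{n-1}$, the left-hand side is $\KF_{P \times I^{n-1}}(X \times I^{n-1}, Z)$ by definition. One verifies directly from the formulas for $A$ and $B$ that $\pi(B) \subset Z$ and $\pi^{-1}(Z) \cap A = Z \times I \supset B$, so $\pi$ is a morphism of pairs $(A,B) \to (X \times I^{n-1}, Z)$, lifting canonically to a morphism in $\widehat{\cC}$ through the projection $P \times I^n \to P \times I^{n-1}$ on the bundle factor. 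Since $\mathrm{Supp}(\pi^*\mathbb{E}) = \pi^{-1}(\mathrm{Supp}\,\mathbb{E})$ avoids $\pi^{-1}(Z) \supset B$ whenever $\mathrm{Supp}\,\mathbb{E} \cap Z = \emptyset$, pulling back gives a well-defined natural homomorphism
\[
\pi^*\colon \KF_{P \times I^{n-1}}(X \times I^{n-1}, Z) \longrightarrow \KF_{P \times I^n}(A, B).
\]

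To construct an inverse $\beta$, given $\mathbb{F} \in \cKF_{P \times I^n}(A,B)$, set $\mathbb{F}_0 := \mathbb{F}|_{X \times I^{n-1} \times \{0\}}$; the support of $\mathbb{F}_0$ avoids $X \times \partial I^{n-1}$ because $X \times \partial I^{n-1} \times \{0\} \subset B$. The restriction $\mathbb{F}|_{Z \times I}$ has support missing $B \cap (Z \times I) = B$, and in particular missing both $Z \times \{1\}$ and $X \times \partial I^{n-1} \times I$, so it is a homotopy from $\mathbb{F}_0|_Z$ (at $u = 0$) to the trivial object (at $u = 1$), kept away from $X \times \partial I^{n-1}$ throughout. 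Because $(X,Y)$ is a CW pair, $Z \hookrightarrow X \times I^{n-1}$ is a cofibration with an open collar neighborhood $N$ deformation retracting onto $Z$; pulling the collar-homotopy back along this retraction extends it to a homotopy of $\mathbb{F}_0|_N$ ending at a trivial object on $N$. Applying the Meyer--Vietoris construction of Lemma~\ref{lem:Meyer-Vietoris} to the cover $\{N,\, X \times I^{n-1} \setminus \overline{N_0}\}$ for a smaller $N_0$ with $\overline{N_0} \subset N$ glues $\mathbb{F}_0$ outside $\overline{N_0}$ with the trivial object over $N$, producing $\mathbb{F}'_0 \in \cKF_{P \times I^{n-1}}(X \times I^{n-1}, Z)$; set $\beta([\mathbb{F}]) := [\mathbb{F}'_0]$.

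Finally one checks $\beta \circ \pi^* = \mathrm{id}$ (starting from $\mathbb{E}$ already trivial on $Z$, the collar-homotopy is constant and the Meyer--Vietoris reassembly returns $\mathbb{E}$ up to homotopy) and $\pi^* \circ \beta = \mathrm{id}$ (a homotopy from $\mathbb{F}$ to $\pi^* \mathbb{F}'_0$ is obtained by combining the collapse of the collar $Z \times I$ onto $Z \times \{0\}$ with the collar-direction flow recorded in $\mathbb{F}|_{Z \times I}$, arranged so the support never enters $B$). The main obstacle is the second paragraph: showing that $\beta$ descends to \emph{homotopy} classes of $\mathbb{F}$ requires running the same Meyer--Vietoris gluing in a family parametrized by an additional homotopy interval, which in turn rests on the cofibration property of $Z \hookrightarrow X \times I^{n-1}$ and the naturality built into Lemma~\ref{lem:Meyer-Vietoris}.
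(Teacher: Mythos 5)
Your map $\pi^*$ is exactly the paper's isomorphism (the paper calls it ${p'}^*$, induced by the projection onto the $\{0\}$-slice), and you correctly identify the essential ingredient, namely that $Z = Y \times I^{n-1} \cup X \times \partial I^{n-1}$ sits in $X \times I^{n-1}$ as a cofibration. But your construction of the inverse $\beta$ has a genuine gap at the gluing step. Lemma \ref{lem:Meyer-Vietoris} requires the two objects to be \emph{isomorphic} on the overlap of the two open sets; you propose to glue $\mathbb{F}_0$ on $X \times I^{n-1} \setminus \overline{N_0}$ with the trivial object on the collar $N$, but on the overlap $N \setminus \overline{N_0}$ you only have a \emph{homotopy} (obtained from $\mathbb{F}|_{Z \times I}$ via the retraction) between $\mathbb{F}_0$ and a trivial object --- and in fact the homotopy you describe starts at $r^*(\mathbb{F}_0|_Z)$, not at $\mathbb{F}_0|_N$ itself. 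Homotopic objects of $\cKF$ need not be isomorphic, and $\mathrm{Supp}\,\mathbb{F}_0$ may well meet $N \setminus \overline{N_0}$, so the hypotheses of the Meyer--Vietoris lemma are simply not met. (A smaller inaccuracy: $\mathbb{F}|_{Z \times \{1\}}$ is not literally the trivial object; it has empty support and is therefore isomorphic to one.) The well-definedness of $\beta$ on homotopy classes and the verification $\pi^* \circ \beta = \mathrm{id}$, which you defer, inherit the same difficulty.

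The repair is to replace the collar-plus-Meyer--Vietoris gluing by a direct use of the homotopy extension property, which is how the paper proceeds. Since $A = X \times I^{n-1} \times \{0\} \cup Z \times I$ includes into $X \times I^{n-1} \times I$ as a cofibration, there is a retraction $\eta : X \times I^{n-1} \times I \to A$; then $\eta^*\mathbb{F}$ is a homotopy from $\mathbb{F}_0$ to $\mathbb{H} = \eta(\cdot,1)^*\mathbb{F}$, and $\mathbb{H}$ has support disjoint from $Z$ because $\eta$ fixes $Z \times \{1\} \subset B$'s complement... more precisely $\eta(z,1) = (z,1)$ and $\mathrm{Supp}\,\mathbb{F}$ misses $Z \times \{1\} \subset B$. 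This is precisely the argument of Lemma \ref{lem:triple}. The paper then avoids constructing a two-sided inverse altogether: it factors $\pi^* = i^* \circ p^*$ through the intermediate pair $(A, Z \times I)$, proves injectivity of $\pi^*$ by the HEP argument above, surjectivity of $p^*$ by excision, and surjectivity of $i^*$ from the exact sequence of the triple $(A,\, Z\times I,\, B)$ together with the vanishing $\KF_{P \times I^n}(Z \times I, B) = 0$ coming from the homotopy equivalence of $(Z \times I, B)$ with $(B,B)$. If you want to keep your explicit-inverse strategy, you must at minimum run the HEP straightening in place of the Meyer--Vietoris gluing, both for $\beta$ itself and again in one higher homotopy parameter to show $\beta$ descends to homotopy classes.
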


\begin{proof}
Recall $\KF_P^{-n+1}(X, Y) = \KF_{P \times I^{n-1}}(X \times I^{n-1}, Y \times I^{n-1} \cup X \times \partial I^{n-1})$ by definition. Consider the following maps:
$$
\xymatrix{
{
\begin{array}{c}
(X \times I^{n-1} \times \{ 0 \}
\cup (Y \times I^{n-1} \cup X \times \partial I^{n-1}) \times I,
\quad \quad \quad  \\
\quad \quad \quad \quad \quad \quad \quad \quad \quad \quad
Y \times I^{n-1} \times \{ 1 \} \cup X \times \partial I^{n-1} \times I)
\end{array}
} 
\ar@{_{(}->}[d]_i \\
{
\begin{array}{c}
(X \times I^{n-1} \times \{ 0 \}
\cup (Y \times I^{n-1} \cup X \times \partial I^{n-1}) \times I,
\quad \quad \quad  \\
\quad \quad \quad \quad \quad \quad \quad \quad \quad \quad
(Y \times I^{n-1} \cup X \times \partial I^{n-1}) \times I)
\end{array}
} 
\ar@{->}[d]_p \\
(X \times I^{n-1} \times \{ 0 \}, \quad
(Y \times I^{n-1} \cup X \times \partial I^{n-1}) \times \{ 0 \}),
\ar@<-3ex>@{^{(}->}[u]_j
}
$$
where $i$ and $j$ are induced from the inclusions, and $p$ from the projection. The map $p' = p \circ i$ is also induced from the projection. We will prove below that ${p'}^*$ provides us the isomorphism in the present lemma. For the aim, we show:
\begin{list}{}{\parsep=-2pt\topsep=4pt}
\item[(a)]
${p'}^*$ is injective;

\item[(b)]
$p^*$ is surjective; and

\item[(c)]
$i^*$ is surjective.
\end{list}
For (a), we use the map $H$ given by the homotopy extension property:
$$
\xymatrix{
X \times I^{n-1} \times \{ 0 \} \cup
(Y \times I^{n-1} \cup X \times \partial I^{n-1}) \times I \
\ar@<-0.5ex>@{^{(}->}[r] \ar[d]^{\mathrm{id}} &
X \times I^{n-1} 
\times I \ar@{-->}[dl]^{H} \\
X \times I^{n-1} \times \{ 0 \} \cup
(Y \times I^{n-1} \cup X \times \partial I^{n-1}) \times I.
}
$$
If we put $h(\cdot) = H(\cdot, 1)$, then $p' \circ h$ is homotopic to the identity of $X \times I^{n-1}$ relative to $Y \times I^{n-1} \cup X \times \partial I^{n-1}$, so that ${p'}^*$ is injective. For (b), it is enough to apply the excision axiom. For (c), we define $\mathcal{X} \supset \mathcal{Y} \supset \mathcal{Z}$ as follows.
\begin{align*}
\mathcal{X} &= 
X \times I^{n-1} \times \{ 0 \} 
\cup (Y \times I^{n-1} \cup X \times \partial I^{n-1}) \times I, \\
\mathcal{Y} &=
(Y \times I^{n-1} \cup X \times \partial I^{n-1}) \times I, \\
\mathcal{Z} &=
Y \times I^{n-1} \times \{ 1 \} \cup X \times \partial I^{n-1} \times I.
\end{align*}
Lemma \ref{lem:triple} gives the exact sequence:
$$
\begin{CD}
\KF_{P \times I^n}(\mathcal{X}, \mathcal{Y}) @>>>
\KF_{P \times I^n}(\mathcal{X}, \mathcal{Z}) @>>>
\KF_{P \times I^n}(\mathcal{Y}, \mathcal{Z}),
\end{CD}
$$
in which the first map coincides with $i^*$. Hence the surjectivity of $i^*$ will follow from $\KF_{P \times I^n}(\mathcal{Y}, \mathcal{Z}) = 0$. To see this vanishing, we let $(\mathcal{Z}, \mathcal{Z}) \to (\mathcal{Y}, \mathcal{Z})$ be the inclusion, and $(\mathcal{Y}, \mathcal{Z}) \to (\mathcal{Z}, \mathcal{Z})$ the map given by composing the following projection and inclusion:
$$
\begin{CD}
\mathcal{Y} @>>>
(Y \times I^{n-1} \cup X \times \partial I^{n-1}) \times \{ 1 \} @>>>
\mathcal{Z}.
\end{CD}
$$
These maps give homotopy equivalences between $(\mathcal{Y}, \mathcal{Z})$ and $(\mathcal{Z}, \mathcal{Z})$, so that we have $\KF_{P \times I^n}(\mathcal{Y}, \mathcal{Z}) \cong \KF_{P \times I^n}(\mathcal{Z}, \mathcal{Z}) = 0$.
\end{proof}

\begin{lem} \label{lem:iso_easy}
For $(X, Y; P) \in \widehat{\cC}$, the group $\KF_{P|_Y}^{-n}(Y)$ is isomorphic to
\begin{multline*}
\KF_{P \times I^{n}}(
X \times I^{n-1} \times \{ 0 \} \cup 
(Y \times I^{n-1} \cup X \times \partial I^{n-1}) \times I, \\
X \times I^{n-1} \times \{ 0 \} \cup 
Y \times I^{n-1} \times \{ 1 \} \cup
X \times \partial I^{n-1} \times I
).
\end{multline*}
\end{lem}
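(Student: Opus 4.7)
The plan is to deduce this lemma by a single application of the excision axiom of Proposition~\ref{prop:cohomology_KF}. Denote the large space and the subspace in the target pair by
\[
\mathcal{X} = X \times I^{n-1} \times \{0\} \cup (Y \times I^{n-1} \cup X \times \partial I^{n-1}) \times I,
\]
\[
\mathcal{Z}' = X \times I^{n-1} \times \{0\} \cup Y \times I^{n-1} \times \{1\} \cup X \times \partial I^{n-1} \times I,
\]
so that the group in question is $\KF_{P \times I^n}(\mathcal{X}, \mathcal{Z}')$; here $\mathcal{X}$ coincides with the space $\mathcal{X}$ of Lemma~\ref{lem:iso_difficult}, while $\mathcal{Z}'$ differs from its $\mathcal{Z}$ by the additional face $X \times I^{n-1} \times \{0\}$. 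Inside the ambient CW complex $X \times I^n$, I would introduce the subcomplexes $A = Y \times I^n$ and $B = \mathcal{Z}'$.

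First I would verify the two set-theoretic identities $A \cup B = \mathcal{X}$ and $A \cap B = Y \times \partial I^n$. The first holds because $Y \times I^{n-1} \times \{1\} \subset A$, so that summand of $B$ is already absorbed by $A$, leaving exactly the three pieces that describe $\mathcal{X}$. The second is obtained by distributing the intersection over the three summands of $B$, which produces $Y \times I^{n-1} \times \{0\}$, $Y \times I^{n-1} \times \{1\}$, and $Y \times \partial I^{n-1} \times I$, whose union is $Y \times (I^{n-1} \times \partial I \cup \partial I^{n-1} \times I) = Y \times \partial I^n$.

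With these identities in hand, excision applied to $A$ and $B$ (with the bundle $P \times I^n$ understood as the pullback of $P$ along $X \times I^n \to X$, restricted appropriately) yields
\[
\KF_{P \times I^n}(\mathcal{X}, \mathcal{Z}') \cong \KF_{(P \times I^n)|_A}(A, A \cap B) = \KF_{P|_Y \times I^n}(Y \times I^n, Y \times \partial I^n),
\]
and the right-hand side is by definition $\KF^{-n}_{P|_Y}(Y)$, giving the claimed isomorphism.

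In contrast to Lemma~\ref{lem:iso_difficult}, where the absence of $X \times I^{n-1} \times \{0\}$ in the smaller subspace forced a three-step argument combining the homotopy extension property, a second excision, and the triple exact sequence of Lemma~\ref{lem:triple}, the present statement is designed so that this very face, now included in $\mathcal{Z}'$, turns the problem into a clean Meyer--Vietoris decomposition via the subcomplexes $A$ and $B$. Accordingly I do not anticipate any serious obstacle; the only nontrivial items are the two set identities above and the routine bundle identification $(P \times I^n)|_{Y \times I^n} = P|_Y \times I^n$.
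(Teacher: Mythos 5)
Your proposal is correct and is exactly the argument the paper intends: its proof of this lemma is the single sentence ``the present lemma straightly follows from the excision axiom,'' and your choice $A = Y \times I^n$, $B = \mathcal{Z}'$ with the verified identities $A \cup B = \mathcal{X}$ and $A \cap B = Y \times \partial I^n$ is precisely the application of the excision axiom of Proposition \ref{prop:cohomology_KF} that makes that sentence work.
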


\begin{proof}
The present lemma straightly follows from the excision axiom.
\end{proof}

Now, for $(X, Y; P) \in \widehat{\cC}$, we define the natural homomorphism
$$
\delta_{-n} : \ 
\KF_{P|_Y}^{-n}(Y) \longrightarrow 
\KF_P^{-n+1}(X, Y), \quad (n \ge 1) 
$$
to be the composition of the isomorphism in Lemma \ref{lem:iso_easy}, the following homomorphism induced from the inclusion map:
$$
\begin{CD}
\begin{array}{c}
\KF_{P \times I^{n}}(
X \times I^{n-1} \times \{ 0 \} \cup 
(Y \times I^{n-1} \cup X \times \partial I^{n-1}) \times I, 
\hspace{2.5cm} \\
\hspace{3cm}
X \times I^{n-1} \times \{ 0 \} \cup 
Y \times I^{n-1} \times \{ 1 \} \cup
X \times \partial I^{n-1} \times I
)
\end{array} \\
@VVV \\
\begin{array}{c}
\KF_{P \times I^{n}}(
X \times I^{n-1} \times \{ 0 \} \cup 
(Y \times I^{n-1} \cup X \times \partial I^{n-1}) \times I, 
\hspace{2.5cm} \\
\hspace{5cm}
Y \times I^{n-1} \times \{ 1 \} \cup
X \times \partial I^{n-1} \times I
),
\end{array}
\end{CD}
$$
and the isomorphism in Lemma \ref{lem:iso_difficult}.

\begin{prop}[Exactness axiom] \label{prop:exactness}
For $(X, Y; P) \in \widehat{\cC}$ and $n \ge 0$, we have the following exact sequences.
\begin{enumerate}
\item[(a)] \
$
\KF_P^{-n}(X, Y) \longrightarrow 
\KF_P^{-n}(X) \longrightarrow 
\KF_{P|_Y}^{-n}(Y).
$

\item[(b)] \
$
\KF_{P|_Y}^{-n-1}(Y) \overset{\delta_{-n-1}}{\longrightarrow} 
\KF_P^{-n}(X, Y) \longrightarrow 
\KF_P^{-n}(X).
$

\item[(c)] \
$
\KF_P^{-n-1}(X) \longrightarrow 
\KF_{P|_Y}^{-n-1}(Y) \overset{\delta_{-n-1}}{\longrightarrow} 
\KF_P^{-n}(X, Y).
$
\end{enumerate}
In the above, the maps $\KF_P^{-n}(X, Y) \to \KF_P^{-n}(X)$ and $\KF_P^{-n}(X) \to \KF_{P|_Y}^{-n}(Y)$ are induced from the inclusions $(X, \emptyset) \to (X, Y)$ and $Y \to X$, respectively.
\end{prop}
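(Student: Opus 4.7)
The plan is to derive all three exact sequences from Lemma~\ref{lem:triple} applied to suitable triples of subcomplexes of iterated cubes, combined with the excision axiom and the identifications of Lemmas~\ref{lem:iso_difficult} and \ref{lem:iso_easy}. Since inclusions of subcomplexes of CW complexes are always cofibrations, the hypothesis of Lemma~\ref{lem:triple} is automatic.

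For part (a), I would apply Lemma~\ref{lem:triple} to the triple
$X \times I^n \supset Y \times I^n \cup X \times \partial I^n \supset X \times \partial I^n$.
The first two groups are $\KF_P^{-n}(X,Y)$ and $\KF_P^{-n}(X)$ by definition, and the third, by the excision axiom, is isomorphic to $\KF_{P|_Y}(Y \times I^n, Y \times \partial I^n) = \KF_{P|_Y}^{-n}(Y)$. For part (b), I would let $\mathcal{X}$, $\mathcal{X}'$, $\mathcal{Z}$ denote the subcomplexes appearing in Lemmas~\ref{lem:iso_difficult} and \ref{lem:iso_easy} with $n$ replaced by $n+1$, and apply Lemma~\ref{lem:triple} to the nested triple $\mathcal{X} \supset \mathcal{X}' \supset \mathcal{Z}$. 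Lemma~\ref{lem:iso_easy} (shifted) identifies the first group with $\KF_{P|_Y}^{-n-1}(Y)$; Lemma~\ref{lem:iso_difficult} (shifted) identifies the second with $\KF_P^{-n}(X,Y)$; and for the third, the decomposition $\mathcal{X}' = (X \times I^n \times \{0\}) \cup \mathcal{Z}$ with intersection $X \times \partial I^n \times \{0\}$ allows one to invoke the excision axiom to obtain $\KF_{P \times I^{n+1}}(\mathcal{X}', \mathcal{Z}) \cong \KF_P^{-n}(X)$. Tracing through the identifications shows that the first arrow in the resulting sequence is precisely $\delta_{-n-1}$.

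For part (c), I would construct yet another triple in the subdivided cube, designed so that its three Lemma~\ref{lem:triple} terms identify with $\KF_P^{-n-1}(X)$, $\KF_{P|_Y}^{-n-1}(Y)$, and $\KF_P^{-n}(X,Y)$ respectively, with $\delta_{-n-1}$ emerging as the second arrow. A natural candidate is obtained by extending the triple of part~(b) by one further subcomplex corresponding to a mapping-cylinder-type enlargement of $(X, Y)$, followed by repeated excision to match the required groups, mirroring the iterated-triple argument Atiyah uses in \cite{A} for ordinary $K$-theory. The main obstacle I anticipate is precisely (c): one must select the triple so that the connecting map produced by Lemma~\ref{lem:triple} agrees, up to the standard sign, with $\delta_{-n-1}$ as defined in the paper, and this requires careful geometric bookkeeping in the subdivided cube. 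Parts (a) and (b), by contrast, reduce to routine diagram-chasing once the triple is chosen.
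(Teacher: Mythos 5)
Your overall strategy coincides with the paper's: all three sequences are obtained by feeding suitable triples of subcomplexes of $X \times I^{n}$ (or $X \times I^{n+1}$) into Lemma~\ref{lem:triple} and then identifying the three terms via excision and Lemmas~\ref{lem:iso_difficult}, \ref{lem:iso_easy}. Your treatments of (a) and (b) are exactly the paper's argument: for (a) the triple $X \times I^n \supset Y \times I^n \cup X \times \partial I^n \supset X \times \partial I^n$, and for (b) the triple $\mathcal{X} \supset \mathcal{X}' \supset \mathcal{Z}$ built from the two lemmas with $n$ shifted by one, with the first arrow being $\delta_{-n-1}$ essentially by the definition of $\delta$. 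Those parts are complete modulo the routine compatibility checks that the paper also leaves to the reader.

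The gap is in (c), and you have correctly located it: the entire content of that part is the choice of triple, and ``a mapping-cylinder-type enlargement of $(X,Y)$ followed by repeated excision'' is not yet a construction. No single further enlargement of the triple from (b) works directly, because in (c) the group $\KF_{P|_Y}^{-n-1}(Y)$ must sit in the \emph{middle} position $\KF(\mathcal{X},\mathcal{Z})$ of the Lemma~\ref{lem:triple} sequence while $\KF_P^{-n-1}(X)$ occupies the first position $\KF(\mathcal{X},\mathcal{Y})$; this forces one to glue a copy of $X \times I^n \times [-1,0]$ (carrying the degree $-n-1$ group for $X$) onto a copy of $Y \times I^n \times [0,1]$ (carrying the degree $-n-1$ group for $Y$) along $X \times I^n \times \{0\} \supset Y \times I^n \times \{0\}$. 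Concretely, the triple the paper uses is
\begin{align*}
\mathcal{X}
&=
X \times I^n \times [-1, 0] \cup
X \times I^n \times \{ 0 \} \cup
Y \times I^n \times [0, 1], \\
\mathcal{Y}
&=
X \times I^n \times \{ -1 \} \cup
X \times I^n \times \{ 0 \} \cup
Y \times I^n \times [0, 1] \cup
X \times \partial I^n \times [-1, 0], \\
\mathcal{Z}
&=
X \times I^n \times \{ -1 \} \cup
Y \times I^n \times \{ 1 \} \cup
Y \times \partial I^n \times [0, 1] \cup
X \times \partial I^n \times [-1, 0],
\end{align*}
for which excision and homotopy equivalences give $\KF(\mathcal{X},\mathcal{Y}) \cong \KF_P^{-n-1}(X)$, $\KF(\mathcal{X},\mathcal{Z}) \cong \KF_{P|_Y}^{-n-1}(Y)$ and $\KF(\mathcal{Y},\mathcal{Z}) \cong \KF_P^{-n}(X,Y)$, and for which the second arrow is checked to agree with $\delta_{-n-1}$. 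Until a triple of this kind is exhibited and those identifications verified, part (c) remains unproved.
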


\begin{proof}
We define $\mathcal{X} \supset \mathcal{Y} \supset \mathcal{Z}$ to be $\mathcal{X} = X \times I^n$, $\mathcal{Y} = Y \times I^n \cup X \times \partial I^n$ and $\mathcal{Z} = X \times \partial I^n$. Then we consider the diagram:
$$
\begin{CD}
\KF_{P \times I^n}(\mathcal{X}, \mathcal{Y}) @>>>
\KF_{P \times I^n}(\mathcal{X}, \mathcal{Z}) @>>>
\KF_{P \times I^n}(\mathcal{Y}, \mathcal{Z}) \\
@| @| @VVV \\
\KF_P^{-n}(X, Y) @>>>
\KF_P^{-n}(X) @>>>
\KF_{P|_Y}^{-n}(Y),
\end{CD}
$$
where the upper row is the exact sequence in Lemma \ref{lem:triple}, the lower row is the sequence in (a), and the third vertical map is the isomorphism in the excision axiom. The diagram above commutes, so that (a) is proved. We can prove (b) and (c) in the same way. For (b), we use $\mathcal{X} \supset \mathcal{Y} \supset \mathcal{Z}$ given by:
\begin{align*}
\mathcal{X} 
&= 
X \times I^{n} \times \{ 0 \} 
\cup (Y \times I^{n} \cup X \times I^{n}) \times I, \\
\mathcal{Y} 
&=
X \times I^{n} \times \{ 0 \} \cup \mathcal{Z}, \\
\mathcal{Z} 
&=
Y \times I^{n} \times \{ 1 \} \cup X \times \partial I^{n} \times I.
\end{align*}
For (c), we use $\mathcal{X} \supset \mathcal{Y} \supset \mathcal{Z}$ given by
\begin{align*}
\mathcal{X} 
&= 
X \times I^n \times [-1, 0] \cup 
X \times I^n \times \{ 0 \} \cup 
Y \times I^n \times [0, 1], \\
\mathcal{Y} 
&=
X \times I^n \times \{ -1 \} \cup
X \times I^n \times \{ 0 \} \cup
Y \times I^n \times [0, 1] \cup
X \times \partial I^n \times [-1, 0], \\
\mathcal{Z} 
&=
X \times I^n \times \{ -1 \} \cup
Y \times I^n \times \{ 1 \} \cup
Y \times \partial I^n \times [0, 1] \cup
X \times \partial I^n \times [-1, 0]. 
\end{align*}
Then we obtain (b) and (c) by identifying the groups in the exact sequence in Lemma \ref{lem:triple}, and checking the compatibility of the identifications. The details of the check are left to the reader (cf. \cite{A}).
\end{proof}

Finally, to complete the proof of the ``exactness'' axiom in Proposition \ref{prop:cohomology_KF}, we extend the exact sequence obtained so far in Proposition \ref{prop:exactness} as a complex. For the purpose, we let $F = F^0 \oplus F^1$ be the $\Z_2$-graded Hermitian vector bundle over the unit disk $D^2 \subset \C$ defined by $F^i = D^2 \times \C$. We also let ${T} : F \to F$ be the Hermitian map of degree $1$ given by ${T}_z = \left( \begin{array}{cc} 0 & \bar{z} \\ z & 0 \end{array} \right)$. As is known \cite{A}, the ``Thom class'' $(F, {T})$ represents a generator of $K(D^2, S^1) \cong \Z$.

\smallskip

For a moment, let $X$ be a paracompact space and $P \to X$ a principal $PU(\H)$-bundle. Multiplying $\mathbb{E} = (\{ U_\alpha \}_{\alpha \in \mathfrak{A}}, s_\alpha, g_{\alpha \beta}, (E_\alpha, h_\alpha), \phi_{\alpha \beta}) \in \cKF_P(X)$ and $(F, {T})$, we get the following object $\beta(\mathbb{E})$ in $\cKF_{P \times D^2}(X \times D^2, X \times S^1)$:
$$
(\{ U_\alpha \times D^2 \}_{\alpha \in \mathfrak{A}}, \
\pi_X^*s_\alpha, \ \pi_X^*g_{\alpha \beta}, \
(\pi_X^*E_\alpha {\otimes} \pi_{D^2}^*F, 
\pi_X^*h_\alpha \hat{\otimes} \pi_{D^2}^*{T}), \
\pi_X^*\phi_{\alpha \beta}),
$$
where $\pi_X$ and $\pi_{D^2}$ are the projections from $X \times D^2$ to $X$ and $D^2$ respectively. The Hermitian map $\pi_X^*h_\alpha \hat{\otimes} \pi_{D^2}^*{T}$ of degree $1$,  acting on the $\Z_2$-graded tensor product $\pi_X^*E_\alpha {\otimes} \pi_{D^2}^*F$, is given by:
$$
\pi_X^*h_\alpha \hat{\otimes} \pi_{D^2}^*{T}
=
\pi_X^*h_\alpha \otimes 1 + \epsilon \otimes \pi^*_{D^2}T,
$$
where $\epsilon$ is $1$ on the even part of $\pi_X^*E_\alpha$, and $-1$ on the odd part. The assignment $\mathbb{E} \mapsto \beta(\mathbb{E})$ gives rise to a functor, and induces a natural homomorphism:
$$
\beta : \ \KF_P(X) \longrightarrow \KF_P^{-2}(X)
= \KF_{P \times D^2}(X \times D^2, Y \times S^1).
$$

Now, we complete the proof of the ``exactness'' axiom in Proposition \ref{prop:cohomology_KF}:

\begin{prop}
For $(X, Y; P) \in \widehat{\cC}$, we define $\delta_0 : \KF_{P|_Y}^0(Y) \to \KF_P^1(X, Y)$ to be $\delta_0 = \delta_{-2} \circ \beta$. Then the following maps compose to give the trivial map, i.e. $\delta_0 \circ i^* = 0$.
$$
\begin{CD}
\KF_P^0(X) @>i^*>>  
\KF_{P|_Y}^0(Y) @>\delta_0>> 
\KF_{P}^1(X, Y).
\end{CD}
$$
\end{prop}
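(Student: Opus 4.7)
The plan is to reduce this to the exactness already proved in Proposition \ref{prop:exactness}(c) by exploiting the naturality of $\beta$ with respect to pull-backs. Writing $i : Y \to X$ also for the map of pairs $(Y, \emptyset) \to (X, \emptyset)$ and for its product with $D^{2}$, the claim breaks into two pieces: first, that $\beta$ commutes with the restriction $i^{*}$, and second, that the restriction $\KF_{P}^{-2}(X) \to \KF_{P|_{Y}}^{-2}(Y)$ followed by $\delta_{-2}$ is zero.

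First I would verify the naturality square
$$
\begin{CD}
\KF_{P}^{0}(X) @>{\beta}>> \KF_{P}^{-2}(X) \\
@V{i^{*}}VV @VV{i^{*}}V \\
\KF_{P|_{Y}}^{0}(Y) @>>{\beta}> \KF_{P|_{Y}}^{-2}(Y).
\end{CD}
$$
This is immediate from the pointwise definition of $\beta$: pulling back a twisted vectorial bundle $\mathbb{E}$ along $i \times \mathrm{id}_{D^{2}}$ commutes with tensoring by the pull-back of the Thom object $(F, T)$, since $(F, T)$ itself is pulled back from a point. No serious work is needed here; it is just unwinding the explicit description of $\beta(\mathbb{E})$ given above the proposition.

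Next I would apply Proposition \ref{prop:exactness}(c) with $n = 1$, which furnishes the exact sequence
$$
\KF_{P}^{-2}(X) \xrightarrow{\ i^{*}\ } \KF_{P|_{Y}}^{-2}(Y) \xrightarrow{\delta_{-2}} \KF_{P}^{-1}(X, Y) = \KF_{P}^{1}(X, Y),
$$
so that $\delta_{-2} \circ i^{*} = 0$ on $\KF_{P}^{-2}(X)$.

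Combining the two pieces, for any class $x \in \KF_{P}^{0}(X)$ we compute
$$
\delta_{0}(i^{*} x) = \delta_{-2}\bigl(\beta(i^{*} x)\bigr) = \delta_{-2}\bigl(i^{*}(\beta x)\bigr) = 0,
$$
which is exactly the required identity $\delta_{0} \circ i^{*} = 0$. The only nontrivial point is the naturality of $\beta$, and this is really a matter of checking that the construction of $\beta(\mathbb{E})$ is manifestly functorial, so I would not anticipate any genuine obstacle.
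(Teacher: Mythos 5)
Your proposal is correct and follows essentially the same route as the paper: the paper's proof consists precisely of the commutative diagram expressing the naturality of $\beta$ with respect to $i^*$ (together with the definition $\delta_0 = \delta_{-2}\circ\beta$) followed by an appeal to Proposition \ref{prop:exactness}(c). Your verification of the naturality square and the index bookkeeping ($n=1$ in part (c), and $\KF_P^{1}(X,Y) = \KF_P^{-1}(X,Y)$ by definition) are exactly what the paper leaves implicit.
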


\begin{proof}
Notice the commutative diagram:
$$
\begin{CD}
\KF^0_P(X) @>{i^*}>> \KF^0_{P|_Y}(Y) @>{\delta_0}>> \KF_P^1(X, Y)  \\
@V{\beta}VV @VV{\beta}V @| \\
\KF^{-2}_P(X) @>{i^*}>> \KF^{-2}_{P|_Y}(Y) @>{\delta_{-2}}>> \KF_P^{-1}(X,Y).
\end{CD}
$$
Now, Proposition \ref{prop:exactness} (c) completes the proof.
\end{proof}


\section{Finite-dimensional approximation}
\label{sec:fda}

In this section, we construct a natural transformation between $K^*_P(X, Y)$ and $\KF^*_P(X, Y)$. The key to the construction is a notion of a finite-dimensional approximation of a family of Fredholm operators. We then study some properties of the natural transformation to prove our main theorem (Theorem \ref{thm:main}).


\subsection{Approximation of family of Fredholm operators}
\label{subsec:fda}

First of all, we introduce some notations: let $\hat{\H}$ be the $\Z_2$-graded Hilbert space $\hat{\H} = \H \oplus \H$, and $\F(\hat{\H})$ the space of self-adjoint bounded operators on $\hat{\H}$ of degree 1 whose square differ from the identity by compact operators:
$$
\F(\hat{\H}) = \{ \hat{A} : \hat{\H} \to \hat{\H} |\ 
\mbox{bounded, self-adjoint, degree $1$, $\hat{A}^2 - 1 \in \K(\hat{\H})$} \}.
$$
We identify $\F(\H)$ with $\F(\hat{\H})$ through the assignment $A \mapsto \hat{A} = \left( \begin{array}{cc} 0 & A^* \\ A & 0 \end{array} \right)$. 

For $A \in \F(\H)$, we write $\rho(\hat{A}^2)$ for the resolvent set of the operator $\hat{A}^2$, and $\sigma(\hat{A}^2) = \C - \rho(\hat{A}^2)$ for the spectrum set. If $\mu$ is such that $0 < \mu < 1$, then $\sigma(\hat{A}^2) \cap [0, \mu)$ consists of a finite number of eigenvalues, since $\hat{A}^2 - 1$ is compact. In particular, corresponding eigenspaces are finite-dimensional, and so is the following direct sum:
$$
(\hat{\H}, \hat{A})_{< \mu} = 
\bigoplus_{\lambda < \mu} \Ker(\hat{A}^2 - \lambda) =
\bigoplus_{\lambda < \mu} 
\{ \xi \in \hat{\H} |\ \hat{A}^2 \xi = \lambda \xi \}.
$$

The purpose of this subsection is to establish:

\begin{prop} \label{prop:fda}
Let $X$ be a topological space, and $A : X \to \F(\H)$ a continuous map. For an open set $U \subset X$ and a number $\mu \in (0, 1) \cap \bigcap_{x \in U} \rho(\hat{A}^2_x)$ given, the family of vector spaces $\bigcup_{x \in U} (\hat{\H}, \hat{A}_x)_{< \mu} \subset U \times \hat{\H}$ gives rise to a (finite rank, $\Z_2$-graded, Hermitian) vector bundle over $U$.
\end{prop}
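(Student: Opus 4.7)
The plan is to realize the spectral subspaces $(\hat{\H}, \hat{A}_x)_{<\mu}$ uniformly as the image of a norm-continuous family of orthogonal projections, built by Cauchy--Riesz functional calculus, and then to invoke the standard fact that the image of such a family forms a locally trivial subbundle of the trivial bundle $U \times \hat{\H}$.

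First, I would check that the topology on $\F(\H)$ makes $x \mapsto \hat{A}_x^2$ norm-continuous as a function into $\B(\hat{\H})$. This is immediate from the definition of the topology: the diagonal blocks of $\hat{A}^2 - 1$ are $A^*A - 1$ and $AA^* - 1$, which by definition vary norm-continuously in $\K(\H)$. Consequently $\hat{A}_x^2$ depends norm-continuously on $x$, and the second resolvent identity
$$(z - \hat{A}_x^2)^{-1} - (z - \hat{A}_{x_0}^2)^{-1} = (z - \hat{A}_x^2)^{-1}(\hat{A}_x^2 - \hat{A}_{x_0}^2)(z - \hat{A}_{x_0}^2)^{-1}$$
yields norm continuity of the resolvent.

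Next, I fix $x_0 \in U$. Since $\hat{A}_{x_0}^2$ is self-adjoint with spectrum in $[0,\infty)$ and $\hat{A}_{x_0}^2 - 1$ is compact, the spectrum of $\hat{A}_{x_0}^2$ in $[0,1)$ consists of finitely many eigenvalues of finite multiplicity. Together with $\mu \in \rho(\hat{A}_{x_0}^2)$ this gives some $\eta > 0$ with $(\mu - \eta, \mu + \eta) \cap \sigma(\hat{A}_{x_0}^2) = \emptyset$. The resolvent identity then produces a neighborhood $V \subset U$ of $x_0$ on which the boundary $\gamma$ of the rectangle $R = [-\epsilon,\ \mu + \eta/2] \times [-\delta, \delta]$ (with $\delta < \eta$ and $\epsilon > 0$ small, chosen symmetric under complex conjugation) lies in $\rho(\hat{A}_x^2)$ for every $x \in V$, and on which $z \mapsto (z - \hat{A}_x^2)^{-1}$ varies norm-continuously uniformly for $z \in \gamma$. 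Because $\sigma(\hat{A}_x^2) \subset [0,\infty)$ and $\mu \in \rho(\hat{A}_x^2)$, the part of the spectrum enclosed by $\gamma$ is precisely $\sigma(\hat{A}_x^2) \cap [0, \mu)$. Hence
$$P_x = \frac{1}{2\pi i} \oint_\gamma (z - \hat{A}_x^2)^{-1}\, dz$$
is a norm-continuous family of orthogonal projections onto $(\hat{\H}, \hat{A}_x)_{<\mu}$.

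Finally, after shrinking $V$ so that $\|P_x - P_{x_0}\| < 1$, the rank of $P_x$ is constant and equal to $\dim (\hat{\H}, \hat{A}_{x_0})_{<\mu}$. Choosing an orthonormal basis $e_1, \ldots, e_n$ of the image of $P_{x_0}$, the vectors $P_x e_1, \ldots, P_x e_n$ form a continuous frame of the image of $P_x$, and Gram--Schmidt yields a local Hermitian trivialization of $\bigcup_{x \in V}(\hat{\H}, \hat{A}_x)_{<\mu}$. The $\Z_2$-grading is automatic: $\hat{A}_x$ has degree $1$, so $\hat{A}_x^2$ and $P_x$ have degree $0$, and the image of $P_x$ inherits the grading of $\hat{\H} = \H \oplus \H$. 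The only genuine subtlety is the step above asserting norm continuity of $\hat{A}^2$ from the somewhat non-standard compact-open topology on $\F(\H)$; once that is in hand, the remainder is classical Kato-style perturbation theory of isolated spectrum.
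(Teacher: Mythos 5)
Your argument is correct and shares its technical core with the paper's proof: both hinge on realizing $(\hat{\H},\hat{A}_x)_{<\mu}$ as the range of a Riesz spectral projection $P_x=\frac{1}{2\pi i}\oint_\gamma (z-\hat{A}_x^2)^{-1}\,dz$ and on the norm continuity of $x\mapsto P_x$ (the paper's Lemma~\ref{lem:continuity} is exactly this contour-integral estimate, and your observation that the topology on $\F(\H)$ forces $x\mapsto \hat{A}_x^2$ to be norm continuous is the same point underlying Lemma~\ref{lem:ensure_hypothesis}). Where you diverge is in the finishing steps. The paper first proves local constancy of $\dim(\hat{\H},\hat{A}_x)_{<\mu}$ separately (Lemma~\ref{lem:dimension}), via the min--max formula and continuity of the ordered eigenvalues $\lambda_k(x)$, and then obtains the local trivialization from a Fredholm argument (Lemma~\ref{lem:trivialization}): the orthogonal projection $p$ onto the fibre at $x_0$ restricts to an isomorphism on nearby fibres, proved by analyzing $p^\perp\pi_x^\perp i^\perp$ as in the appendix of Atiyah's book. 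You instead extract both constancy of rank and the trivialization directly from $\lVert P_x-P_{x_0}\rVert<1$, using the frame $P_xe_1,\dots,P_xe_n$ and Gram--Schmidt; this is a bit more economical and avoids the eigenvalue-continuity lemma altogether. One sentence you should tighten: the assertion that the spectrum enclosed by $\gamma$ is exactly $\sigma(\hat{A}_x^2)\cap[0,\mu)$ does not follow merely from $\sigma(\hat{A}_x^2)\subset[0,\infty)$ and $\mu\in\rho(\hat{A}_x^2)$, because your rectangle reaches out to $\mu+\eta/2$ and spectrum of $\hat{A}_x^2$ could a priori lie in $(\mu,\mu+\eta/2)$, in which case $P_x$ would project onto too large a subspace. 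The claim does follow from the self-adjoint perturbation bound $\sigma(\hat{A}_x^2)\subset\{t : \mathrm{dist}(t,\sigma(\hat{A}_{x_0}^2))\le\lVert \hat{A}_x^2-\hat{A}_{x_0}^2\rVert\}$ after shrinking $V$ so that $\lVert \hat{A}_x^2-\hat{A}_{x_0}^2\rVert<\eta/2$, which is available from the norm continuity you already established; so this is a presentational fix rather than a gap. (The paper sidesteps the issue by choosing $V$ so that $\lambda_r(x)<\mu<\lambda_{r+1}(x)$ throughout $V$.)
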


The restriction of $\hat{A}_x$ to $(\hat{\H}, \hat{A}_x)_{< \mu}$ approximates the original family $\{ \hat{A}_x \}$.

\medskip

Notice that the next lemma ensures the hypothesis in the proposition:

\begin{lem} \label{lem:ensure_hypothesis}
Let $A : X \to \F(\H)$ be a continuous map. For each point $x_0 \in X$ and a number $\mu \in \rho(\hat{A}^2_{x_0})$, there is an open neighborhood $U$ of $x_0$ such that:
$$
\mu \in \bigcap_{x \in U} \rho(\hat{A}^2_x).
$$
\end{lem}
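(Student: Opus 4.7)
The strategy is to exploit the definition of the topology on $\F(\H)$: the coordinates $A^*A - 1$ and $AA^* - 1$ are topologized by the operator norm, so those two specific expressions vary \emph{norm-continuously} with $x$. From the identification $\hat{A} = \left(\begin{smallmatrix} 0 & A^* \\ A & 0 \end{smallmatrix}\right)$ one computes
$$
\hat{A}^2 - 1 \;=\;
\begin{pmatrix} A^*A - 1 & 0 \\ 0 & AA^* - 1 \end{pmatrix},
$$
so the assignment $x \mapsto K_x := \hat{A}^2_x - 1 \in \K(\hat{\H})$ is continuous with respect to the operator norm topology on $\K(\hat{\H})$.

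Having reduced to a norm-continuity statement, the rest is classical perturbation theory. Write $\mu - \hat{A}^2_x = (\mu - 1) - K_x$. By hypothesis $\mu \in \rho(\hat{A}^2_{x_0})$, so the bounded operator $T_{x_0} := (\mu - 1) - K_{x_0}$ is invertible. Since the set of invertible elements of $\mathcal{B}(\hat{\H})$ is open in the operator norm topology (a Neumann-series argument: $T$ is invertible and $\|S - T\| < \|T^{-1}\|^{-1}$ imply $S$ is invertible), there exists $\varepsilon > 0$ such that every bounded operator within norm-distance $\varepsilon$ of $T_{x_0}$ is invertible.

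By the norm-continuity of $x \mapsto K_x$ established above, there is an open neighborhood $U$ of $x_0$ such that $\|K_x - K_{x_0}\| < \varepsilon$ for all $x \in U$, and hence $\|T_x - T_{x_0}\| < \varepsilon$ on $U$. Therefore $T_x = \mu - \hat{A}^2_x$ is invertible for every $x \in U$, which is exactly the assertion $\mu \in \bigcap_{x \in U}\rho(\hat{A}^2_x)$.

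The only subtle point is the first paragraph: one must observe that although $A$ itself is only continuous in the compact-open topology (which is strictly weaker than the norm topology), the \emph{specific combinations} $A^*A - 1$ and $AA^* - 1$ appearing in $\hat{A}^2 - 1$ are precisely the coordinates topologized in norm by the definition of $\F(\H)$. This is the feature of the compact-open formulation of $\F(\H)$ alluded to in Remark~2.6 of the paper, and it is exactly what makes the standard open-mapping/Neumann-series perturbation argument go through without any further work.
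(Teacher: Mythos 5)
Your proof is correct and follows the same route as the paper: the paper's own argument cites exactly the three facts you use (norm-continuity of $x \mapsto \hat{A}_x^2 - \mu$, invertibility at $x_0$, and openness of the invertibles in $\mathcal{B}(\hat{\H})_{\mathrm{norm}}$), and your first paragraph simply makes explicit why the first of these holds given the topology on $\F(\H)$.
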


\begin{proof}
This lemma follows from the following facts: (i) the map $X \to \B(\hat{\H})_\mathrm{norm}$, $(x \mapsto \hat{A}_x^2 - \mu)$ is continuous; (ii) the operator $\hat{A}^2_{x_0} - \mu$ is invertible; and (iii) invertible bounded operators on $\hat{\H}$ form an open subset in $\B(\hat{\H})_\mathrm{norm}$.
\end{proof}

For the proof of Proposition \ref{prop:fda}, we show some lemmas.

\begin{lem} \label{lem:dimension}
Let $U$ and $\mu$ be as in Proposition \ref{prop:fda}. For each point $x_0 \in U$, there exists an open neighborhood $V \subset U$ of $x_0$ such that:
$$
\quad \quad \quad
\mathrm{dim} (\hat{\H}, \hat{A}_x)_{< \mu} = 
\mathrm{dim} (\hat{\H}, \hat{A}_{x_0})_{< \mu} 
\quad < +\infty
$$
for all $x \in V$.
\end{lem}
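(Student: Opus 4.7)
The plan is to realize $(\hat{\H}, \hat{A}_x)_{<\mu}$ as the range of a spectral projection depending continuously on $x$. I first observe that the topology on $\F(\H)$ makes $x \mapsto A_x^*A_x - 1$ and $x \mapsto A_xA_x^* - 1$ continuous in the operator-norm topology on $\K(\H)$, so
$$
\hat{A}_x^2 - 1 = \begin{pmatrix} A_x^*A_x - 1 & 0 \\ 0 & A_xA_x^* - 1 \end{pmatrix}
$$
depends continuously on $x$ in operator norm. In particular $\hat{A}_x^2$ is a norm-continuous family of self-adjoint operators on $\hat{\H}$, each a compact perturbation of the identity, so its spectrum is a closed subset of $[0,\infty)$ whose only possible accumulation point is $1$; in particular $\sigma(\hat{A}_{x_0}^2) \cap [0, \mu)$ consists of finitely many eigenvalues of finite total multiplicity, yielding the finiteness statement.

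Next, since $\mu$ lies in the open set $\rho(\hat{A}_{x_0}^2)$, I can fix some $\mu' \in (0, \mu)$ with $[\mu', \mu] \cap \sigma(\hat{A}_{x_0}^2) = \emptyset$ and choose a rectifiable Jordan contour $\gamma \subset \rho(\hat{A}_{x_0}^2)$ enclosing exactly $\sigma(\hat{A}_{x_0}^2) \cap [0, \mu']$. Then
$$
P_{x_0} = \frac{1}{2\pi i} \oint_\gamma (z - \hat{A}_{x_0}^2)^{-1}\,dz
$$
is the orthogonal projection onto $(\hat{\H}, \hat{A}_{x_0})_{<\mu}$. By compactness of $\gamma$ together with norm continuity of $\hat{A}_x^2$, the resolvent $(z - \hat{A}_x^2)^{-1}$ exists and is jointly norm-continuous on a neighborhood $V' \times \gamma$ of $\{x_0\} \times \gamma$, so $P_x = \frac{1}{2\pi i}\oint_\gamma (z - \hat{A}_x^2)^{-1}\,dz$ defines a norm-continuous family of projections on $V'$. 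Continuous families of projections have locally constant finite rank, so after possibly shrinking $V'$ we get $\mathrm{rank}(P_x) = \mathrm{rank}(P_{x_0}) = \dim(\hat{\H}, \hat{A}_{x_0})_{<\mu}$.

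It remains to identify $\mathrm{rank}(P_x)$ with $\dim(\hat{\H}, \hat{A}_x)_{<\mu}$: the projection $P_x$ picks out the spectral subspace for the part of $\sigma(\hat{A}_x^2)$ inside $\gamma$, which is contained in $[0, \mu'] \subset [0, \mu)$, so I need to rule out eigenvalues of $\hat{A}_x^2$ in the gap $[\mu', \mu)$. This follows from upper semi-continuity of the spectrum under norm perturbations: since $[\mu', \mu]$ is a compact subset of $\rho(\hat{A}_{x_0}^2)$, there is a neighborhood $V \subset V'$ of $x_0$ on which $[\mu', \mu] \cap \sigma(\hat{A}_x^2) = \emptyset$. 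For such $x$, every eigenvalue of $\hat{A}_x^2$ in $[0, \mu)$ actually lies in $[0, \mu']$ and is enclosed by $\gamma$, giving the desired equality. The one delicate point to watch is the very first step: because the topology on $\F(\H)$ is only compact-open on the operator part, the map $x \mapsto \hat{A}_x$ is \emph{not} norm continuous, and norm continuity of $\hat{A}_x^2$ must be extracted from the norm-continuous components $A_x^*A_x - 1$ and $A_xA_x^* - 1$ built into the definition of $\F(\H)$; this is precisely where the choice of topology pays off.
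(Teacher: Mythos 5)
Your argument is correct, but it takes a different route from the paper's. The paper does not use Riesz projections at this stage: it lists the eigenvalues $\lambda_1(x)\le\lambda_2(x)\le\cdots$ of $\hat{A}_x^2$ with multiplicity, invokes the min--max characterization $\lambda_k(x)=\sup_{\dim E=k-1}\inf_{u\in E^\perp\setminus\{0\}}\langle u,\hat{A}_x^2u\rangle/\lvert u\rvert^2$ to get continuity of each $\lambda_k$ (again via norm continuity of $x\mapsto\hat{A}_x^2$, the same point you rightly flag as the payoff of the compact--open topology on $\F(\H)$), and then chooses $\varepsilon$ with $2\varepsilon<\min\{\mu-\lambda_r(x_0),\,\lambda_{r+1}(x_0)-\mu\}$ so that $\lambda_r(x)<\mu<\lambda_{r+1}(x)$ on the resulting neighborhood $V$; the dimension is then pinned at $r$ directly. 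Your contour-integral argument is sound --- the one point to state carefully is that the contour must cross the real axis inside the spectral gap $[\mu',\mu]$ so that, after shrinking to where $\sigma(\hat{A}_x^2)\cap[\mu',\mu]=\emptyset$, the spectrum enclosed is exactly $\sigma(\hat{A}_x^2)\cap[0,\mu)$ --- but it effectively proves more than the lemma asks: the norm continuity of the spectral projections $\pi_x$ is precisely the content of the paper's next lemma (Lemma \ref{lem:continuity}), which is proved there by the same resolvent integral, using the $\varepsilon$ and $V$ produced by the min--max argument to position the circle. So the paper's two-step organization buys an elementary, quantitative dimension count first and reuses its output; your approach merges the two lemmas into a single projection-continuity statement from which both conclusions drop out.
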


\begin{proof}
Because $\hat{A}_{x_0}^2 - 1$ is compact, $(\hat{\H}, \hat{A}_{x_0})_{< \mu}$ is finite-dimensional. We put $r = \mathrm{dim} (\hat{\H}, \hat{A}_{x_0})_{< \mu}$. Let $\lambda_1(x) \le \lambda_2(x) \le \cdots $ denote eigenvalues of $\hat{A}_x^2$, where each eigenvalue is included as many times as the dimension of its eigenspace. As is known, $\lambda_k(x)$ is a continuous function in $x$, because of the expression:
$$
\lambda_k(x) = 
\sup_{\substack{E \subset \H \\ \mathrm{dim}E = k -1}} 
\inf_{u \in E^\perp - \{ 0 \}}
\frac{\langle u, \hat{A}_x^2 u \rangle}{\lvert u \rvert^2}.
$$
We choose $\varepsilon$ so as to be $0 < 2\varepsilon < \mathrm{min}\{ \mu - \lambda_r(x_0), \ \lambda_{r+1}(x_0) - \mu \}$, and define the open set $V$ such that $x_0 \in V \subset U$ to be:
$$
V = 
\bigcap_{i = 1}^{r+1}
\{ x \in U |\ \lvert \lambda_i(x) - \lambda_i(x_0) \rvert < \varepsilon \}.
$$
For all $x \in V$, we have $\lambda_r(x) < \mu < \lambda_{r+1}(x)$, so that $\mathrm{dim}(\hat{\H}, \hat{A}_{x})_{< \mu} = r$.
\end{proof}

\begin{lem} \label{lem:continuity}
Let $U$ and $\mu$ be as in Proposition \ref{prop:fda}. The orthogonal projections $\pi_x : \hat{\H} \to \hat{\H}$ onto $(\hat{\H}, \hat{A}^2_x)_{< \mu}$ constitute the continuous map 
$$
\pi = \{ \pi_x \}_{x \in U} : \ 
U \longrightarrow \mathcal{B}(\H)_\mathrm{norm}.
$$
\end{lem}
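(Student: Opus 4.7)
The plan is to exhibit $\pi_x$ as a Riesz spectral projection, which manifestly depends continuously in norm on a parameter through which the operator varies in norm. The crucial preliminary step is to upgrade the continuity of $x \mapsto \hat{A}_x$ to norm continuity of $x \mapsto \hat{A}^2_x$. This follows from the definition of the topology on $\F(\H)$: the assignments $x \mapsto A_x^*A_x - 1$ and $x \mapsto A_xA_x^* - 1$ take values in $\K(\H)_{\mathrm{norm}}$, so
$$
\hat{A}^2_x - 1 = \begin{pmatrix} A_x^*A_x - 1 & 0 \\ 0 & A_xA_x^* - 1 \end{pmatrix}
$$
is norm continuous in $x$, and hence so is $\hat{A}^2_x$.

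Fix $x_0 \in U$. Since $\hat{A}^2_{x_0}$ is self-adjoint and $\hat{A}^2_{x_0} - 1$ is compact, its spectrum lies in $[0, \infty)$ with essential spectrum $\{1\}$ and isolated eigenvalues of finite multiplicity accumulating only at $1$. Because $\mu \in \rho(\hat{A}^2_{x_0})$ and the resolvent set is open, there exists $\delta > 0$ with $(\mu - \delta, \mu + \delta) \subset \rho(\hat{A}^2_{x_0})$, and the finitely many eigenvalues of $\hat{A}^2_{x_0}$ below $\mu$ all lie in $[0, \mu - \delta]$. I take $\Gamma \subset \C$ to be a simple closed curve (for instance a circle slightly displaced from the real axis) that encloses $[-1, \mu - \delta/2]$ and lies entirely in $\rho(\hat{A}^2_{x_0})$.

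Because $\Gamma$ is compact and contained in the resolvent set of $\hat{A}^2_{x_0}$, the openness of the invertible operators in $\B(\hat{\H})_\mathrm{norm}$, combined with the norm continuity of $x \mapsto \hat{A}^2_x$, yields a neighborhood $V \subset U$ of $x_0$ on which $\Gamma \subset \rho(\hat{A}^2_x)$ for every $x \in V$ and on which $(z, x) \mapsto (z - \hat{A}^2_x)^{-1}$ is jointly norm continuous on $\Gamma \times V$. Shrinking $V$ further by invoking Lemma \ref{lem:dimension} and the continuity of the eigenvalues $\lambda_k(x)$ established in its proof, one can arrange that, for every $x \in V$, the eigenvalues of $\hat{A}^2_x$ that are smaller than $\mu$ are precisely those enclosed by $\Gamma$.

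On $V$, the Riesz projection formula
$$
\pi_x = \frac{1}{2\pi i} \oint_\Gamma (z - \hat{A}^2_x)^{-1}\, dz
$$
produces the spectral projection of $\hat{A}^2_x$ onto the sum of eigenspaces for eigenvalues enclosed by $\Gamma$, which by construction equals $(\hat{\H}, \hat{A}_x)_{< \mu}$; as $\hat{A}^2_x$ is self-adjoint, this spectral projection is automatically orthogonal. Continuity of $x \mapsto \pi_x$ in $\B(\hat{\H})_\mathrm{norm}$ is then immediate from the joint norm continuity of the integrand on the compact contour $\Gamma$. The main subtlety lies in the preparatory observation that the topology on $\F(\H)$ forces $\hat{A}^2_x$ to depend norm continuously on $x$, since only then can the holomorphic functional calculus be wielded in this standard way; once that is in place, the rest of the argument is routine.
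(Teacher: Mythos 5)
Your proposal is correct and follows essentially the same route as the paper: both express $\pi_x$ as a Riesz contour integral of the resolvent of $\hat{A}_x^2$ over a fixed contour separating the eigenvalues below $\mu$ from the rest of the spectrum, and both rest on the norm continuity of $x \mapsto \hat{A}_x^2$, which the topology on $\F(\H)$ guarantees. The only cosmetic difference is that the paper derives an explicit estimate $\lVert \pi_x - \pi_{x_0} \rVert \le M \lVert \hat{A}_x^2 - \hat{A}_{x_0}^2 \rVert$ from a uniform resolvent bound, whereas you invoke joint norm continuity of the resolvent on the compact contour.
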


\begin{proof}
It suffices to prove that, for a point $x_0 \in U$, we have $\lVert \pi_x - \pi_{x_0} \rVert \to 0$ as $x \to x_0$. For this aim, we choose $\varepsilon$ and $V$ as in the proof of Lemma \ref{lem:dimension}. If $x \in V$, then $\pi_x$ has the expression:
$$
\pi_x = 
\frac{1}{2\pi i}\int_C R(z; \hat{A}_x^2) dz,
$$
where $ R(z; \hat{A}_x^2) = (z - \hat{A}_x^2)^{-1}$ is the resolvent, and $C$ is a counterclockwisely oriented circle in $\C$ such that: its center lies on the real axis; and the open disks $B(\lambda_i(x_0); 2\varepsilon)$, ($i = 1, \ldots, r$) are inside $C$, but $B(\lambda_{r+1}(x_0); 2\varepsilon)$ is outside. Notice $\varepsilon < \lvert z - \lambda \rvert$ for $(z, \lambda) \in C \times \bigcup_{x \in V} \sigma(\hat{A}^2_x)$. Thus, for $(z, x) \in C \times V$, we have:
$$
\lVert R(z; \hat{A}_x^2) \rVert 
=
\sup_{u \neq 0} \frac{\lVert R(z; \hat{A}_x^2) u \rVert}{\lVert u \rVert}
=
\sup_{v \neq 0} \frac{\lVert v \rVert}{\lVert (z - \hat{A}_x^2) v \rVert}
=
\frac{1}
{\inf_{v \neq 0} \frac{\lVert (z - \hat{A}_x^2) v \rVert}{\lVert v \rVert}}
<
\frac{1}{\varepsilon}.
$$
Now, thanks to the integral expression of $\pi_x$, we get a constant $M$ such that $\lVert \pi_x - \pi_{x_0} \rVert \le M \lVert \hat{A}_x^2 - \hat{A}_{x_0}^2 \rVert$ for $x \in V$. Hence $\lVert \pi_x - \pi_{x_0} \rVert \to 0$ as $x \to x_0$. 
\end{proof}

\begin{lem} \label{lem:trivialization}
Let $U$ and $\mu$ be as in Proposition \ref{prop:fda}. For $x_0 \in U$, there is an open neighborhood $W \subset U$ of $x_0$ such that: the projection $p : \hat{\H} \to (\hat{\H}, \hat{A}_{x_0})_{< \mu}$ induces an isomorphism $(\hat{\H}, \hat{A}_x)_{< \mu} \cong (\hat{\H}, \hat{A}_{x_0})_{< \mu}$ for all $x \in W$.
\end{lem}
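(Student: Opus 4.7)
The plan is to combine Lemma \ref{lem:dimension} and Lemma \ref{lem:continuity}: the former tells us that on some neighborhood $V$ of $x_0$ the spaces $(\hat{\H}, \hat{A}_x)_{<\mu}$ all have the same finite dimension $r$ as $(\hat{\H}, \hat{A}_{x_0})_{<\mu}$, while the latter tells us that the orthogonal projections $\pi_x$ depend continuously on $x$ in the norm topology. Since $p = \pi_{x_0}$ and the source and target will have the same finite dimension, it is enough to verify injectivity of $p|_{(\hat{\H}, \hat{A}_x)_{<\mu}}$ on a possibly smaller neighborhood $W \subset V$ of $x_0$.

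The injectivity will come from a standard ``small-perturbation of projections'' argument. Given $\xi \in (\hat{\H}, \hat{A}_x)_{<\mu}$, we have $\pi_x \xi = \xi$ by definition, hence
$$
\xi = \pi_x \xi = (\pi_x - \pi_{x_0}) \xi + \pi_{x_0} \xi.
$$
If $\pi_{x_0} \xi = 0$, then $\xi = (\pi_x - \pi_{x_0})\xi$, and taking norms gives $\lVert \xi \rVert \le \lVert \pi_x - \pi_{x_0} \rVert \cdot \lVert \xi \rVert$. By Lemma \ref{lem:continuity}, I can shrink $V$ to an open neighborhood $W$ of $x_0$ on which $\lVert \pi_x - \pi_{x_0} \rVert < 1$, and this forces $\xi = 0$.

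Thus $p|_{(\hat{\H}, \hat{A}_x)_{<\mu}} \colon (\hat{\H}, \hat{A}_x)_{<\mu} \to (\hat{\H}, \hat{A}_{x_0})_{<\mu}$ is an injective linear map between two finite-dimensional vector spaces of the same dimension $r$, hence an isomorphism for every $x \in W$. I do not expect any real obstacle here: the only thing to be slightly careful about is that $W$ be chosen inside the neighborhood furnished by Lemma \ref{lem:dimension} so that the target dimensions agree; the norm estimate on $\pi_x - \pi_{x_0}$ is then a direct consequence of Lemma \ref{lem:continuity}.
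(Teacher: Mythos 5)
Your argument is correct, and it reaches the conclusion by a somewhat more elementary route than the paper. Both proofs reduce the lemma to (i) injectivity of $p$ on $(\hat{\H},\hat{A}_x)_{<\mu}$ for $x$ near $x_0$ and (ii) the equality of dimensions supplied by Lemma \ref{lem:dimension}, but they obtain (i) differently. The paper passes to the complementary projections $\pi_x^\perp = 1-\pi_x$, invokes the Fredholm-stability argument from the appendix of \cite{A} to make $p^\perp\pi_x^\perp i^\perp : F^\perp \to F^\perp$ bijective on a neighborhood $V'$, and then reads off from a map of short exact sequences that $p$ is an isomorphism on $\Ker p^\perp\pi_x^\perp \supset \Ker\pi_x^\perp=(\hat{\H},\hat{A}_x)_{<\mu}$. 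You instead use the standard perturbation-of-projections estimate: for $\xi\in(\hat{\H},\hat{A}_x)_{<\mu}$ with $\pi_{x_0}\xi=0$ one has $\xi=(\pi_x-\pi_{x_0})\xi$, so $\lVert\pi_x-\pi_{x_0}\rVert<1$ forces $\xi=0$; this invokes Lemma \ref{lem:continuity} at exactly the same point where the paper does. Your version is shorter and self-contained, avoiding the external reference and the diagram chase; the paper's version mirrors the index-bundle construction of \cite{A}, which is the argument one would need if the relevant subspaces could change dimension, but that flexibility is not needed here precisely because of Lemma \ref{lem:dimension}. Either way the proof is complete.
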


\begin{proof}
Let $F^\perp$ be the orthogonal complement of $F = (\hat{\H}, \hat{A}_{x_0})_{< \mu}$. We write $p^\perp : \hat{\H} \to F^\perp$ for the projection, and $i^\perp : F^\perp \to \hat{\H}$ for the inclusion. The operator $\pi_x^\perp = 1 - \pi_x$ is apparently Fredholm, and $x \mapsto \pi_x^\perp$ is norm continuous by Lemma \ref{lem:continuity}. Thus, in the same way as that used in the appendix of \cite{A}, we can find an open neighborhood $V'$ of $x_0$ such that: the map $p^\perp \pi_x^\perp i^\perp : \ F^\perp \to F^\perp$ is bijective for all $x \in V'$. Now, by the map of exact sequences:
$$
\begin{CD}
0 @>>> F^\perp @>{i^\perp}>> \hat{\H} @>{p}>> F @>>> 0 \\
@. @V{p^\perp \pi_x^\perp i^\perp}VV @VV{p^\perp \pi_x^\perp}V @VVV @. \\
0 @>>> F^\perp @= F^\perp @>>> 0 @>>> 0,
\end{CD}
$$
we see that $p$ induces an isomorphism $\Ker p^\perp \pi_x^\perp \cong F$ for $x \in V'$. Note that $\Ker p^\perp \pi_x^\perp \supset \Ker \pi_x^\perp = (\hat{\H}, \hat{A}_x)_{< \mu}$. By Lemma \ref{lem:dimension}, the dimension of $(\hat{\H}, \hat{A}_x)_{< \mu}$ is equal to that of $F = (\hat{\H}, \hat{A}_{x_0})_{< \mu}$, provided that $x \in V$. Thus, $p$ induces an isomorphism $(\hat{\H}, \hat{A}_x)_{< \mu} \cong (\hat{\H}, \hat{A}_{x_0})_{< \mu}$ for all $x \in W = V \cap V'$.
\end{proof}

\begin{proof}[Proof of Proposition \ref{prop:fda}]
It suffices to see that the family $\bigcup_{x \in U} (\hat{\H}, \hat{A}_x)_{< \mu}$ is locally trivial. We consider the open neighborhood $W$ of a point $x_0 \in U$ in Lemma \ref{lem:trivialization}. Then, on $W$, the map $\mathrm{id} \times p : W \times \H \to W \times (\hat{\H}, \hat{A}_{x_0})_{< \mu}$ induces a local trivialization $\bigcup_{x \in W} (\hat{\H}, \hat{A}_x)_{< \mu} \to W \times (\hat{\H}, \hat{A}_{x_0})_{< \mu}$.
\end{proof}

\begin{rem}
Instead of $\F(\H)$, we can use the space of Fredholm operators with the norm topology to obtain the same claim as Proposition \ref{prop:fda}. A key to this case is that $0$ is a discrete spectrum of a non-invertible Fredholm operator.
\end{rem}


\subsection{Natural transformation}

Let $X$ be a paracompact space, and $P \to X$ a principal $PU(\H)$-bundle. We construct a natural homomorphism:
$$
\alpha : \ 
K_P(X) \longrightarrow \KF_P(X)
$$
as follows: suppose that a section $\mathbb{A} \in \Gamma(X, P \times_{Ad} \F(\H))$ is given. We choose an open cover $\U = \{ U_\alpha \}_{x \in \mathfrak{A}}$ such that there are local sections $s_\alpha : U_\alpha \to P|_{U_\alpha}$ and lifts of transition functions $g_{\alpha \beta} : U_{\alpha \beta} \to U(\H)$. The local sections of $P$ allow us to identify $\mathbb{A}$ with a collection of maps $A_\alpha : U_\alpha \to \F(\H)$ such that $A_\alpha = g_{\alpha \beta} A_\beta g_{\alpha \beta}^{-1}$ on $U_{\alpha \beta}$. Because of Lemma \ref{lem:ensure_hypothesis}, taking a refinement of $\U$ if necessary, we can find a positive number $\mu_\alpha$ such that $\mu_\alpha \in \bigcap_{x \in U_\alpha} \rho((\hat{A}_\alpha^2)_x)$. By Proposition \ref{prop:fda}, we get a finite rank $\Z_2$-graded Hermitian vector bundle $E_\alpha = \bigcup_{x \in U_\alpha}(\hat{\H}, (\hat{A}_\alpha)_x)_{< \mu_\alpha}$ over $U_\alpha$. The restriction of $\hat{A}_\alpha$ to $E_\alpha$ defines a Hermitian map $h_\alpha : E_\alpha \to E_\alpha$ of degree $1$. On $U_{\alpha \beta}$, we define $\phi_{\alpha \beta} : E_\beta \to E_\alpha$ to be the composition of the maps:
$$
\bigcup_{x \in U_{\alpha \beta}} 
(\hat{\H}, (\hat{A}_\beta)_x)_{< \mu_\beta} 
\to
U_{\alpha \beta} \times \hat{\H} 
\overset{\mathrm{id} \times g_{\alpha\beta}}{\longrightarrow}
U_{\alpha \beta} \times \hat{\H} 
\to
\bigcup_{x \in U_{\alpha \beta}} 
(\hat{\H}, (\hat{A}_\alpha)_x)_{< \mu_\alpha},
$$
where the first and third maps are the inclusion and projection, respectively. The data $\mathbb{E} = (\U, s_\alpha, g_{\alpha \beta}, (E_\alpha, h_\alpha), \phi_{\alpha \beta})$ is a $P$-twisted vectorial bundle over $X$. The isomorphism class of $\mathbb{E}$ is independent of the choice of $\mu_\alpha$, $s_\alpha$, $g_{\alpha \beta}$ and $\mathcal{U}$. Now, the homomorphism $\alpha : K_P(X) \to \KF_P(X)$ is given by $\alpha([\mathbb{A}]) = [\mathbb{E}]$.

\medskip

The same construction yields a natural map $\alpha : K_P(X, Y) \to \KF_P(X, Y)$ for $Y \subset X$, and hence $\alpha_n : K^n_P(X, Y) \to \KF_P^n(X, Y)$. 

\begin{lem} \label{lem:compatible_with_periodicity}
For a paracompact space $X$ and a principal $PU(\H)$-bundle $P \to X$, the following diagram commutes:
$$
\begin{CD}
K_P(X) @>\beta>> K_{P \times D^2}(X \times D^2, X \times S^1) \\
@V{\alpha}VV  @VV{\alpha}V \\
\KF_P(X) @>{\beta}>> \KF_{P \times D^2}(X \times D^2, X \times S^1),
\end{CD}
$$
where the upper map $\beta$ induces the Bott periodicity $K_P^0(X) \cong K_P^{-2}(X)$.
\end{lem}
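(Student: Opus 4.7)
The plan is to unpack both compositions locally from the same initial data and exhibit a natural isomorphism of vectorial bundles over $X \times D^2$.

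Fix $\mathbb{A} \in \Gamma(X, P \times_{Ad} \F(\H))$ with local representatives $A_\alpha : U_\alpha \to \F(\H)$ arising from sections $s_\alpha$ and lifts $g_{\alpha\beta}$, and choose cutoffs $\mu_\alpha \in \bigcap_{x \in U_\alpha} \rho(\hat{A}_\alpha(x)^2)$ as in the construction of $\alpha$. The local component of $\alpha(\mathbb{A})$ is $(E_\alpha, h_\alpha) = ((\hat{\H}, \hat{A}_\alpha)_{<\mu_\alpha}, \hat{A}_\alpha|_{E_\alpha})$, and applying $\beta$ in $\KF$ produces $(E_\alpha \otimes F,\ h_\alpha \hat{\otimes} T)$ over $U_\alpha \times D^2$. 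For the other route, $\beta(\mathbb{A})$ is locally the Fredholm family $\hat{A}_\alpha \hat{\otimes} 1 + 1 \hat{\otimes} T$ on $\hat{\H} \otimes F$. Since $\hat{A}$ and $T$ both have degree $1$, the Koszul sign rule forces the cross terms to cancel, so the square equals $\hat{A}_\alpha^2 \otimes 1 + 1 \otimes T^2 = \hat{A}_\alpha^2 \otimes 1 + |z|^2 \cdot 1$. Consequently the $(<\nu)$-eigenspace of this square at $(x,z)$ is $(\hat{\H}, \hat{A}_\alpha)_{<\nu - |z|^2}(x) \otimes F$, and after possibly refining the cover of $X \times D^2$ (using Lemma \ref{lem:ensure_hypothesis}) the local component of $\alpha\beta(\mathbb{A})$ near $U_\alpha \times \{0\}$ is exactly this eigenspace bundle, equipped with the restricted Bott operator.

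With the shared cutoff $\mu_\alpha$, the $\alpha\beta$-fiber $V_2 = (\hat{\H}, \hat{A}_\alpha)_{<\mu_\alpha - |z|^2} \otimes F$ is a subbundle of the $\beta\alpha$-fiber $V_1 = E_\alpha \otimes F$, and the inclusion $\iota : V_2 \hookrightarrow V_1$ together with the orthogonal projection $\pi : V_1 \to V_2$ intertwine the two operators (both being restrictions of the same Bott operator on $\hat{\H} \otimes F$). Trivially $\pi \iota = 1_{V_2}$. For $\iota \pi \fallingdotseq 1_{V_1}$, at a point $(x_0, 0)$ choose $\nu < \mu_\alpha/2$ and a neighborhood on which $|z|^2 < \nu$, so that any vector in $(V_1, h_\alpha \hat{\otimes} T)_{<\nu}$ lies in $(\hat{\H}, \hat{A}_\alpha)_{<\nu - |z|^2} \otimes F \subset V_2$ where $\iota \pi$ acts as the identity; at a point $(x_0, z_0)$ with $z_0 \ne 0$ choose $\nu < |z_0|^2/2$ and a neighborhood on which $|z|^2 > \nu$, whereupon $(V_1, h_\alpha \hat{\otimes} T)_{<\nu}$ is zero and the relation holds vacuously. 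Since $\iota, \pi$ act as the identity on the $F$-factor, they commute with $\pi_X^* g_{\alpha\beta}$ and with the transition morphisms $\pi_X^* \phi_{\alpha\beta}$ inherited from $\alpha(\mathbb{A})$, so these local comparisons assemble into a morphism of vectorial bundles in $\cKF_{P \times D^2}$, which is an isomorphism by the $\fallingdotseq$-computation above.

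The main obstacle is the ``size mismatch'': the $\beta\alpha$-fiber $V_1$ is strictly larger than the natural Bott approximation $V_2$, and they can be identified only because the equivalence $\fallingdotseq$ in the definition of $\HF$ ignores behavior above any positive spectral threshold; this is the design feature of vectorial bundles that makes the diagram commute. A secondary technicality is that $\mu_\alpha$ may fail to lie in $\rho(\hat{A}_\alpha(x)^2 + |z|^2)$ when $|z|^2$ is comparable to or larger than $\mu_\alpha$, so the approximation of $\beta(\mathbb{A})$ requires a refined cover in the $D^2$-direction; however the Bott operator is invertible on $X \times (D^2 \setminus \{0\})$, so on the complement of a neighborhood of $X \times \{0\}$ both vectorial bundles are equivalent to the zero object, and the comparison reduces to a neighborhood of $X \times \{0\}$ where the shared cutoff $\mu_\alpha$ suffices.
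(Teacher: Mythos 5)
Your argument is correct and rests on the same key computation as the paper's proof --- the square of the Bott family is $\hat{A}_\alpha^2 \otimes 1 + \lvert z \rvert^2$, so its spectral subspaces factor as tensor products --- but you handle the cutoff in a genuinely different way. The paper covers $D^2$ by the annuli $V(s;\epsilon_\alpha) = \{\, z \in D^2 \mid s - \epsilon_\alpha < \lvert z \rvert^2 < s + \epsilon_\alpha \,\}$ and approximates $\beta(\hat{\mathbb{A}})$ on $U_\alpha \times V(s;\epsilon_\alpha)$ with the \emph{shifted} cutoff $\mu_\alpha + s$; after arranging a uniform spectral gap of width $2\epsilon_\alpha$ around $\mu_\alpha$, the resulting bundle is literally $E_\alpha \otimes (\C \oplus \C)$ with the Bott operator, i.e.\ the very object representing $\beta\alpha([\hat{\mathbb{A}}])$, so no comparison morphism is needed at all. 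You instead keep the fixed cutoff $\mu_\alpha$, which is admissible only near $X \times \{0\}$, and compensate with the $\fallingdotseq$-flexibility --- note that your $V_2$ is exactly $(E_\alpha \otimes F, h_\alpha \hat{\otimes} T)_{< \mu_\alpha}$, so the inclusion/projection pair is an isomorphism in $\HF$ for the uniform threshold $\nu = \mu_\alpha$, making your two-case analysis unnecessary --- together with the invertibility of the Bott operator off the zero section. Both routes work; the paper's shifted cutoff buys an on-the-nose identification over all of $X \times D^2$, while yours isolates exactly where the equivalence relation $\fallingdotseq$ is doing the work. Two points deserve tightening. First, the reduction to a neighborhood of $X \times \{0\}$ implicitly needs a morphism-level analogue of Lemma \ref{lem:Meyer-Vietoris}: two objects supported in $X \times \{0\}$ that are isomorphic near the support and support-free elsewhere are globally isomorphic, the gluing being unobstructed because on the support-free region any two $h$-compatible morphisms agree up to $\fallingdotseq$. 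Second, for $V_2$ to be a vector bundle on your neighborhood you must check that $\mu_\alpha$ stays in the resolvent set of the squared Bott operator there; if you impose the uniform gap $\epsilon_\alpha$ as the paper does, then on $\lvert z \rvert^2 < \epsilon_\alpha$ one even has $(\hat{\H}, \hat{A}_\alpha)_{< \mu_\alpha - \lvert z \rvert^2} = (\hat{\H}, \hat{A}_\alpha)_{< \mu_\alpha}$, so the advertised ``size mismatch'' actually disappears near the zero section.
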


Before the proof of this lemma, we explain the map $\beta$ inducing the Bott periodicity for twisted $K$-cohomology. Roughly, the map is a ``multiplication of a Thom class''. To be more precise, recall the identification of $\F(\H)$ with $\F(\hat{\H})$. This identification is compatible with the conjugate actions of $PU(\H)$, through the diagonal embedding $U(\H) \to U(\hat{\H})$. So we can represent an element in $K_P(X)$ by a section $\hat{\mathbb{A}}$ of the associated bundle $P \times_{Ad} \F(\hat{\H})$ over $X$. We identify the section with a map $\hat{\mathbb{A}} : P \to \F(\hat{\H})$. For $(p, z) \in P \times D^2$, we define a degree $1$ self-adjoint Fredholm operator $\beta(\mathbb{A})_{(p, z)}$ on the $\Z_2$-graded Hilbert space $\hat{\H} \otimes (\C \oplus \C)$ by
\begin{align*}
\beta(\hat{\mathbb{A}})_{(p, z)} 
&= 
\hat{\mathbb{A}}_p \otimes 1 + \epsilon \otimes {T}_z \\
&= 
\left(
\begin{array}{cc}
0 & \mathbb{A}_p^* \\
\mathbb{A}_p & 0
\end{array}
\right)
\otimes
\left(
\begin{array}{cc}
1 & 0 \\
0 & 1
\end{array}
\right)
+
\left(
\begin{array}{cc}
1 & 0 \\
0 & -1
\end{array}
\right)
\otimes
\left(
\begin{array}{cc}
0 & \bar{z} \\
z & 0
\end{array}
\right).
\end{align*}
This operator defines a section $\beta(\hat{\mathbb{A}})$ of $(P \times D^2) \times_{Ad} \F(\hat{\H} \otimes (\C \oplus \C))$ over $X \times D^2$, and induces the Bott periodicity map $\beta : K_P(X) \to K_{P \times D^2}(X \times D^2, X \times S^1)$.

\begin{proof}[Proof of Lemma \ref{lem:compatible_with_periodicity}]
Let $\hat{\mathbb{A}} \in \Gamma(X, P \times_{Ad} \F(\hat{\H}))$ be a section given. First, we describe $\beta \alpha ([\hat{\mathbb{A}}]) \in \KF^{-2}_P(X)$ as follows: let an object
$$
\mathbb{E} = 
( \{ U_\alpha \}_{\alpha \in \mathfrak{A}}, 
s_\alpha, g_{\alpha \beta}, 
(E_\alpha, h_\alpha), \phi_{\alpha \beta}) 
\in \cKF_P(X)
$$ 
represent the element $\alpha([\hat{\mathbb{A}}]) \in \KF_P(X)$. We suppose that $E_\alpha$ is given by $E_\alpha = \bigcup_{x \in U_\alpha} (\hat{\H}, (\hat{A}_\alpha)_x)_{< \mu_\alpha}$ under a choice of a positive number $\mu_\alpha$. Taking a finer open cover if necessary, we can assume that the rank of $E_\alpha$ is $r_\alpha$, and that there exists a positive number $\epsilon_\alpha$ such that $\epsilon_\alpha < \mathrm{min}\{ \mu_\alpha - \lambda_{r_\alpha}(x), \ \lambda_{r_\alpha+1}(x) - \mu_\alpha \}$ for all $x \in U_\alpha$. Here $\lambda_j(x)$ is the $j$th eigenvalue of $(\hat{A}_\alpha^2)_x$, which varies continuously in $x$. (cf.\ Lemma \ref{lem:dimension}) We define an open cover $\{ V(s; \epsilon_\alpha) \}_{s \in [0, 1]}$ of $D^2$ by setting
$$
V(s; \epsilon_\alpha) =
\{ z \in D^2 |\ 
s - \epsilon_\alpha < \lvert z \rvert^2 < s + \epsilon_\alpha \}, \quad
s \in [0, 1].
$$
Then we can represent $\beta \alpha ([\hat{\mathbb{A}}]) \in \KF^{-2}_P(X)$ by
$$
(\{ U_\alpha \times V(s; \epsilon_\alpha) \}, 
\pi_X^*s_\alpha, \ \pi_X^*g_{\alpha \beta}, \
(\pi_X^*E_\alpha \otimes (\C \oplus \C), 
\pi_X^*h_\alpha \hat{\otimes} \pi_{D^2}^*{T}), \
\pi_X^*\phi_{\alpha \beta}).
$$
Next, we consider the element $\alpha \beta ([\hat{\mathbb{A}}]) \in \KF^{-2}_P(X)$. In applying Proposition \ref{prop:fda} to the section $\beta(\hat{\mathbb{A}})$ of $(P \times D^2) \times_{Ad} \F(\hat{\H} \otimes (\C \oplus \C))$, we use the open set $U_\alpha \times V(s; \epsilon_\alpha)$ and the positive number $\mu_\alpha + s$. The $j$th eigenvalue of the square of $(\hat{A}_\alpha)_x \otimes 1 + \epsilon \otimes {T}_z$ is $\lambda_j(x) + \lvert z \rvert^2$. Since $\lambda_{r_\alpha}(x) + \lvert z \rvert^2 < \mu_\alpha + s < \lambda_{r_\alpha+1}(x) + \lvert z \rvert^2$ holds for $(x, z) \in U_\alpha \times V(s; \epsilon_\alpha)$ by construction, we obtain:
$$
(\hat{\H} \otimes (\C \oplus \C), 
(\hat{A}_\alpha)_x \otimes 1 + \epsilon \otimes {T}_z)_{< \mu_\alpha + s}
\cong
(\hat{\H}, (\hat{A}_\alpha)_x)_{< \mu}
\otimes
(\C \oplus \C).
$$
Hence the representative of $\beta\alpha([\hat{\mathbb{A}}])$ also represents $\alpha \beta ([\hat{\mathbb{A}}])$.
\end{proof}

\begin{prop} \label{prop:natural_transformation}
The homomorphisms $\alpha_n : K_P^n(X, Y) \to \KF_P^n(X, Y)$, $(n \le 1)$ constitute a natural transformation of cohomology theories. 
\end{prop}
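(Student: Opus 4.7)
The plan is to verify the three conditions for $\{\alpha_n\}$ to define a natural transformation of cohomology theories on $\widehat{\cC}$: naturality with respect to morphisms $(f, F)$, compatibility with the connecting homomorphisms $\delta$, and compatibility with the Bott periodicity $\beta$. The third condition is exactly Lemma \ref{lem:compatible_with_periodicity}, which simultaneously justifies defining $\alpha_1$ via the identification $K^1_P(X,Y) = K^{-1}_P(X,Y)$ coming from twisted Bott periodicity.

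For naturality, I would just trace through the construction of $\alpha$ using pulled-back data. Given a section $\mathbb{A}$ of $P \times_{Ad} \F(\H)$ and local data $(\{U_\alpha\}, s_\alpha, g_{\alpha\beta}, \mu_\alpha)$ producing a representative vectorial bundle $\mathbb{E}$, one can process the pullback $(f, F)^* \mathbb{A}$ using the cover $\{f^{-1}U_\alpha\}$, the pulled-back local sections and lifts, and the \emph{same} parameters $\mu_\alpha$. Since eigenspaces and spectral projections are defined fiberwise, the outcome is literally $(f, F)^*\mathbb{E}$, so the naturality square commutes. One also has to check that $\alpha$ descends to relative groups: at a point $x$, the finite-dimensional operator $h_\alpha$ on $(\hat{\H}, \hat{A}_x)_{<\mu_\alpha}$ is invertible if and only if $\hat{A}_x$ itself is invertible, so $\mathrm{Supp}\,\alpha([\mathbb{A}]) = \mathrm{Supp}\,\mathbb{A}$, and $\alpha$ sends $K_P(X, Y)$ into $\KF_P(X, Y)$.

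The main step is compatibility with $\delta$. I would mirror the construction of Section \ref{subsec:exactness_axiom} on the twisted $K$ side, writing $\delta_{-n-1} : K_{P|_Y}^{-n-1}(Y) \to K_P^{-n}(X, Y)$ as the composition of: an excision isomorphism (the twisted $K$-theoretic analog of Lemma \ref{lem:iso_easy}), an inclusion-induced map, and the twisted $K$-theoretic analog of Lemma \ref{lem:iso_difficult}. Each of Lemmas \ref{lem:triple}, \ref{lem:iso_difficult}, \ref{lem:iso_easy} admits an immediate twisted $K$-theoretic counterpart whose proof is formally identical: one uses the already-known excision and homotopy invariance of twisted $K$-cohomology, together with a Meyer--Vietoris patching for Fredholm sections that glues a given section to a trivial invertible section over an open set where the original section is invertible. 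Because $\alpha$ preserves supports and is natural under pullback, $\alpha$ commutes with each of the three pieces in the factorization of $\delta$, and the full square commutes.

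The main obstacle is the bookkeeping in that last step: one must set up the twisted $K$-theoretic analogs of Lemmas \ref{lem:triple}, \ref{lem:iso_easy}, \ref{lem:iso_difficult} carefully and check that $\alpha$ is compatible with every auxiliary map appearing in them (excision, the projection--homotopy-extension trick behind Lemma \ref{lem:iso_difficult}, and the Meyer--Vietoris patching). This is somewhat tedious but entirely formal, since all constructions on both sides are built from operations (restriction, pullback, gluing with a trivial invertible object) that respect supports and that $\alpha$ manifestly intertwines.
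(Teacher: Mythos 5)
Your proposal is correct and follows essentially the same route as the paper: the paper's proof likewise reduces everything to naturality of the fiberwise spectral construction, the observation that the connecting maps $\delta_n$ for $n<0$ are built from inclusion-induced maps and excision isomorphisms (hence intertwined by $\alpha$), and Lemma \ref{lem:compatible_with_periodicity} for the $n=0$ case where $\delta_0 = \delta_{-2}\circ\beta$. Your write-up merely spells out in more detail the bookkeeping that the paper dispatches in one sentence.
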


\begin{proof}
It suffices to see that the natural homomorphisms $\alpha_n$ are compatible with the axioms in Proposition \ref{prop:twisted_K_cohomology} and \ref{prop:cohomology_KF}. The homotopy axioms, the excision axioms and the additivity axioms are clearly compatible with $\alpha_n$. For $n < 0$, inclusion maps define $\delta_n$, so that $\delta_n \alpha_n = \alpha_{n+1} \delta_n$. This formula also holds for $n = 0$, because of Lemma \ref{lem:compatible_with_periodicity}. As a result, the ``exactness'' axioms are compatible with $\alpha_n$.
\end{proof}


\subsection{Finite-dimensional approximation in untwisted case}
\label{subsec:property_of_fda}

In untwisted case, the map $\alpha$ has the following property:

\begin{prop} \label{prop:fda_untwisted}
If $(X, Y; P) \in \widehat{\cC}$ is such that $P \to X$ is trivial, then the homomorphism $\alpha : K_P(X, Y) \to \KF_P(X, Y)$ is bijective.
\end{prop}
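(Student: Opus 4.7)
The plan is to trivialize $P$, reduce to the untwisted comparison, and compare the theories via a diagram chase on the long exact sequences, bootstrapping from Furuta's Theorem \ref{thm:Furuta}.

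Choose a trivialization $P \cong X \times PU(\H)$. This identifies $K_P(X, Y)$ with $K(X, Y)$ and $\KF_P(X, Y)$ with $\KF(X, Y)$, and turns $\alpha$ into the untwisted natural transformation. It therefore suffices to show that $\alpha : K(X, Y) \to \KF(X, Y)$ is bijective for every CW pair $(X, Y)$.

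I would then apply a diagram chase to the commutative ladder
$$
\begin{CD}
K^{-1}(X) @>>> K^{-1}(Y) @>>> K^{0}(X, Y) @>>> K^{0}(X) @>>> K^{0}(Y) \\
@V{\alpha}VV @V{\alpha}VV @V{\alpha}VV @V{\alpha}VV @V{\alpha}VV \\
\KF^{-1}(X) @>>> \KF^{-1}(Y) @>>> \KF^{0}(X, Y) @>>> \KF^{0}(X) @>>> \KF^{0}(Y).
\end{CD}
$$
The top row is exact by Proposition \ref{prop:twisted_K_cohomology}; the bottom row is exact at $\KF^{-1}(Y)$, $\KF^{0}(X, Y)$ and $\KF^{0}(X)$ by parts (c), (b) and (a) of Proposition \ref{prop:exactness} with $n = 0$. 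This is exactly what the standard five-lemma argument needs for bijectivity of the middle vertical arrow: exactness at $\KF^{0}(Y)$ (not yet available, since Bott periodicity for $\KF$ is still pending) is not required, because the surjectivity of $\alpha$ on $K^{0}(X, Y)$ can be produced by lifting any class of $\KF^{0}(X, Y)$ to $K^{0}(X)$ via the bijection $\alpha : K^{0}(X) \to \KF^{0}(X)$ and then correcting the residual term using the exactness at $\KF^{0}(X, Y)$ together with surjectivity of $\alpha$ on $K^{-1}(Y)$. Hence it suffices to prove bijectivity of $\alpha$ on the four outer absolute groups $K^{0}(X)$, $K^{0}(Y)$, $K^{-1}(X)$ and $K^{-1}(Y)$.

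The degree $-1$ groups are identified, via excision and homotopy invariance, with degree-$0$ groups on suspensions: $K^{-1}(X) \cong \tilde K^{0}(\Sigma X_{+})$ and $\KF^{-1}(X) \cong \tilde \KF^{0}(\Sigma X_{+})$, and similarly for $Y$. So everything reduces to showing $\alpha : K^{0}(Z) \to \KF^{0}(Z)$ is bijective for every CW complex $Z$. When $Z$ is compact, this is precisely Furuta's Theorem \ref{thm:Furuta}. For a general CW complex $Z$, I would write $Z$ as the filtered colimit of its finite subcomplexes $Z_\lambda$ and invoke a Milnor $\lim$-$\lim^{1}$ sequence for both theories to deduce the bijection on $Z$ from the termwise compact bijections, relying on the local and finite-dimensional nature of Fredholm families and of vectorial bundles for the required colimit compatibility and for the vanishing of the relevant $\lim^{1}$ terms. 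The main technical obstacle is exactly this non-compact extension; once it is settled, the diagram chase and the suspension identification complete the proof.
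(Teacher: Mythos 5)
Your route---trivialize $P$, then run a five lemma on the pair sequence centered at $K^0(X,Y)$ and push everything down to the absolute comparison---is genuinely different from the paper's, which instead collapses $Y$ to a point and treats the pointed case directly. Your bookkeeping of the partial exactness is correct: the chase for the middle arrow only needs exactness of the bottom row at $\KF^{-1}(Y)$ and $\KF^{0}(X,Y)$ together with the complex property at $\KF^{0}(X)$, and Proposition \ref{prop:exactness} with $n=0$ supplies exactly these. But two of your reductions have gaps. First, Theorem \ref{thm:Furuta} compares the \emph{vector-bundle} group $K(Z)$ with $\KF(Z)$, whereas $\alpha$ is defined on the Fredholm model $[Z,\F(\H)]$; you must still check that $\alpha$ equals the comparison map composed with the index isomorphism, which the paper does in Lemma \ref{lem:fda_untwisted_first} by realizing any vector bundle as $\bigcup_x \Ker \hat{A}_x$ for a family with $\sigma(\hat{A}_x^2)=\{0,1\}$. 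Second, your suspension step does not land on an absolute group: identifying $\KF^{-1}(X)=\KF(X\times I,\,X\times\partial I)$ with a group on $X\times I/X\times\partial I$ is precisely the collapsing Lemma \ref{lem:collaption_KF}, and what you obtain is the \emph{pointed} group $\KF(\,\cdot\,,\mathrm{pt})$; passing from there to the absolute case requires $\KF(I,\partial I)=0$ and a further exact-sequence argument, which is Lemma \ref{lem:fda_untwisted_second}. So your reorganization does not actually bypass the paper's lemmas---it needs all of them---though it is a legitimate alternative assembly.

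The third step, extending the absolute comparison to non-compact $Z$ by a Milnor $\lim$--$\lim^1$ sequence, is the one genuine divergence, and it is left unproved: a Milnor sequence for $\KF$ would require additivity plus Mayer--Vietoris/exactness input for the telescope that goes beyond the partial exactness established in Proposition \ref{prop:cohomology_KF}, and you acknowledge this is unsettled. Note that the paper's own proof of Proposition \ref{prop:fda_untwisted} is likewise confined to compact $X$ (both Theorem \ref{thm:Furuta} and Lemma \ref{lem:collaption_KF} assume compactness), and only compact instances---points and $(D^q,S^{q-1})$---are invoked in Lemma \ref{lem:main}. The economical fix is therefore to prove the proposition for compact pairs, which suffices for all downstream uses, rather than to develop the $\lim^1$ machinery.
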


If $P$ is trivial, then we can identify $K_P(X, Y)$ with the set of the homotopy classes of maps $A : X \to \F(\H)$ such that $A_y$, ($y \in Y$) is the identify. Accordingly, we identify the map in Proposition \ref{prop:fda_untwisted} with:
$$
\alpha :  [(X, Y), (\F(\H), 1)] \longrightarrow \KF(X, Y),
$$
where the group $\KF(X, Y)$ consists of homotopy classes of vectorial bundles whose supports do not intersect $Y$. 

\medskip

For the proof of Proposition \ref{prop:fda_untwisted}, we notice Furuta's result:

\begin{thm}[\cite{F1}] \label{thm:Furuta_pair}
For a compact space $X$ and its closed subspace $Y \subset X$, there is an isomorphism $K(X, Y) \to \KF(X, Y)$.
\end{thm}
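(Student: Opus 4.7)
My plan is to reduce Theorem \ref{thm:Furuta_pair} to the absolute case (Theorem \ref{thm:Furuta}) by a naturality argument combined with the exact sequences of both cohomology theories. The strategy has three stages: reduction to a reduced/pointed setting via excision, construction of split short exact sequences, and the short five-lemma.

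First I would consider the quotient $Z := X/Y$, a compact space with basepoint $* \in Z$ the image of $Y$. The excision axiom for both $K$-theory (standard) and $\KF$-theory (Proposition \ref{prop:cohomology_KF}(2), proved more generally in Subsection \ref{subsec:exactness_axiom}) provides natural isomorphisms
$$
K(X, Y) \cong K(Z, *), \qquad \KF(X, Y) \cong \KF(Z, *),
$$
compatible with the natural transformation. If $(X, Y)$ is not already cofibered, I would first replace it by the mapping cylinder of $Y \hookrightarrow X$, which is compact and has the same quotient up to homotopy. It therefore suffices to establish the isomorphism for the pointed compact space $(Z, *)$.

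Next, I would exploit the retraction $r : Z \to *$ of the basepoint inclusion $i : * \to Z$. Since $r^* \circ i^*$ is the identity on any invariant of the point, the maps $K^n(Z) \to K^n(*)$ and $\KF^n(Z) \to \KF^n(*)$ are split surjective for all relevant $n$. Fed into the partial exact sequence of Proposition \ref{prop:cohomology_KF}(3) applied to $(Z, *)$, this surjectivity at $\KF^{-1}(*)$ forces the connecting map $\KF^{-1}(*) \to \KF^0(Z, *)$ to vanish, and hence --- using exactness at $\KF^0(Z, *)$ and $\KF^0(Z)$ --- yields the split short exact sequence
$$
0 \to \KF(Z, *) \to \KF(Z) \to \KF(*) \to 0.
$$
The analogous short exact sequence for $K$-theory is classical, and naturality of the transformation assembles them into a commutative ladder.

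Finally, Theorem \ref{thm:Furuta} applied to the compact space $Z$ gives that $K(Z) \to \KF(Z)$ and $K(*) \to \KF(*)$ are isomorphisms. The short five-lemma applied to the ladder then forces $K(Z, *) \to \KF(Z, *)$ to be an isomorphism as well, which combined with the first step proves the theorem. I expect the main obstacle to be the first step: for a general compact closed inclusion, the identification with the pointed quotient can be delicate, and one either needs a cofibration hypothesis, a mapping-cylinder maneuver, or a direct argument exploiting the support conditions in both definitions --- sections and vectorial bundles with support away from $Y$ naturally define objects on $X/Y$ that are trivialized near the basepoint, so in fact the support-based definitions essentially give the reduced/pointed form on the nose.
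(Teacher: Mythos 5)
Your route is genuinely different from the paper's. The paper proves Theorem \ref{thm:Furuta_pair} in the appendix by running the \emph{same explicit construction} as in the absolute case: given $\mathbb{E} \in \cKF(X,Y)$, the partition-of-unity stabilization of Subsections \ref{subsec:appendix_vector_bundle}--\ref{subsec:appendix_surjective} produces an honest $\Z_2$-graded vector bundle $F = F^0 \oplus F^1$ with a degree-one map $h$ whose support avoids $Y$, i.e.\ a triple $(F^0, F^1, h_{10})$ with $h_{10}$ invertible on $Y$, which is exactly an Atiyah-style relative $K$-class; injectivity is then Proposition \ref{prop:appendix_injective_relative} via Lemma \ref{lem:appendix_injective}. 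This works for an arbitrary compact pair with no point-set hypotheses. Your plan instead deduces the relative statement from Theorem \ref{thm:Furuta} by quotienting, splitting the exact sequences, and applying the five lemma; it is not circular relative to the paper's logical order (Theorem \ref{thm:Furuta} and the Section \ref{sec:cohomology_KF} machinery are established independently of Theorem \ref{thm:Furuta_pair}), and when it applies it is a slicker argument.

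The genuine gap is in the hypotheses under which the machinery you invoke is available. First, the identification $\KF(X,Y) \cong \KF(X/Y, \pt)$ is \emph{not} the excision axiom of Proposition \ref{prop:cohomology_KF}(2) (which only removes an open $U$ with $U \subset Y$); it is Lemma \ref{lem:collaption_KF}, itself a result of Furuta proved by the direct support argument you gesture at in your last sentence --- so you are tacitly importing another piece of \cite{F1} rather than deriving it. Second, and more seriously, the exact sequences you feed into the five lemma rest on Lemma \ref{lem:triple} and Proposition \ref{prop:exactness}, whose proofs require the relevant inclusions to be cofibrations (they use the homotopy extension property explicitly). For an arbitrary compact Hausdorff pair $(X,Y)$ the quotient $Z = X/Y$ need not be well-pointed, so neither sequence is established for $(Z,\pt)$. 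Your mapping-cylinder patch is exactly where the work lies: you must show that replacing $(Z,\pt)$ by a cofibered model does not change $\KF(\,\cdot\,,\,\cdot\,)$, and the paper's homotopy invariance is only stated for maps of pairs, not for this kind of replacement. For the pairs actually used downstream (CW pairs such as $(I,\partial I)$ in Lemma \ref{lem:fda_untwisted_second}) your argument would go through, but as a proof of the theorem in its stated generality it is incomplete; the paper's direct construction is what closes that gap.
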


The $K$-group $K(X, Y)$ above is formulated by means of vector bundles, rather than Fredholm operators. In the case of $Y = \emptyset$, Theorem \ref{thm:Furuta_pair} gives Theorem \ref{thm:Furuta}. The proof of Theorem \ref{thm:Furuta_pair} is also included in Appendix.

\medskip

Thanks to the above result of Furuta, we have:

\begin{lem} \label{lem:fda_untwisted_first}
For a compact space $X$, the map $\alpha : [X, \F(\H)] \to \KF(X)$ is bijective.\end{lem}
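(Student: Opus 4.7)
\medskip

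The plan is to identify $\alpha$ with the composition of two known bijections. For compact $X$, the space $\F(\H)$ is a representing space for $K$-theory \cite{A-Se}, giving a natural bijection $\mathrm{ind} : [X, \F(\H)] \to K(X)$. By Theorem \ref{thm:Furuta}, the map $\iota : K(X) \to \KF(X)$ sending $[V^0] - [V^1]$ to the vectorial bundle $(V^0 \oplus V^1, 0)$ on the single-set cover $\{X\}$ is also a bijection. Thus the lemma reduces to showing $\alpha = \iota \circ \mathrm{ind}$.

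Given $A : X \to \F(\H)$, I first replace $A$ by a finite-rank perturbation $A' = A + K$ such that $\ker A'_x$ and $\ker (A')^*_x$ have locally constant rank on $X$. This is the standard argument of Atiyah: by compactness of $X$, a single finite-dimensional subspace $W \subset \H$ can be chosen so that $W + A_x(\H) = \H$ for every $x$, and adding an appropriate finite-rank operator supported on $W$ achieves the desired surjectivity and constancy of ranks. The straight-line homotopy $A + tK$ stays in $\F(\H)$, so $[A] = [A']$ in $[X, \F(\H)]$ and $\alpha([A]) = \alpha([A'])$. Moreover, $\ker A'$ and $\ker (A')^*$ are now genuine $\Z_2$-graded Hermitian vector bundles on $X$, and $\mathrm{ind}(A') = [\ker A'] - [\ker (A')^*] = \mathrm{ind}(A)$ in $K(X)$.

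Compactness of $X$ combined with constancy of ranks gives a uniform spectral gap: there exists $\mu \in (0, 1)$ lying in $\bigcap_{x \in X} \rho((\hat A'_x)^2)$, since the $(k{+}1)$st eigenvalue of $(\hat A'_x)^2$ is a positive continuous function (by the min-max argument already used in the proof of Lemma \ref{lem:dimension}) and is therefore bounded below on compact $X$. Proposition \ref{prop:fda} then yields a global finite-rank $\Z_2$-graded Hermitian vector bundle $E = \bigcup_x(\hat\H, \hat A'_x)_{<\mu}$ over $X$, which for this small $\mu$ coincides with $\ker A' \oplus \ker (A')^*$, and on which the restriction $\hat A'|_E$ is identically $0$. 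Hence, on the trivial cover $\{X\}$, the vectorial bundle $\alpha([A'])$ is represented by $(E, 0, \mathrm{id})$, which is exactly $\iota([\ker A'] - [\ker (A')^*]) = \iota(\mathrm{ind}(A))$. This establishes $\alpha = \iota \circ \mathrm{ind}$, and bijectivity follows.

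The main obstacle is the perturbation step: one must produce a continuous family of finite-rank operators making both $\ker$ and $\mathrm{coker}$ into vector bundles. This relies essentially on the compactness of $X$ (used to pick one global $W$) and is the only nontrivial input outside the machinery already developed; everything else is an explicit application of Proposition \ref{prop:fda} and Theorem \ref{thm:Furuta}.
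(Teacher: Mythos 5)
Your proof is correct and takes essentially the same route as the paper: both factor $\alpha$ through the index isomorphism $[X,\F(\H)]\to K(X)$ and the isomorphism $K(X)\to\KF(X)$ of Theorem \ref{thm:Furuta}, and both verify commutativity of the resulting triangle on representatives $A$ for which $\ker\hat A$ is an honest vector bundle and $\alpha([A])$ is visibly $(\ker\hat A,0)$ --- the paper realizes a given bundle by such an $A$ via the surjectivity argument for $\mathrm{ind}$, while you perturb a given $A$ into this form, which amounts to the same thing. One small caveat: a finite-rank perturbation cannot always make $A'_x$ surjective (the index may be negative on some components), but the standard compactness argument still makes $\ker A'$ and $\ker (A')^*$ into bundles of locally constant rank (e.g.\ by precomposing $A$ with the orthogonal projection onto a finite-codimensional subspace $V$ with $V\cap\ker A_x=0$ for all $x$), which is all your argument actually uses.
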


\begin{proof}
For (a), we consider the following diagram:
$$
\xymatrix{
 & [X, \F(\H)] \ar[dl]_{\mathrm{ind}} \ar[dr]^{\alpha} & \\
K(X) \ar[rr] &   & \KF(X),
}
$$
where $\mathrm{ind} : [X, \F(\H)] \to K(X)$ is the isomorphism constructed in \cite{A,A-Se}. The homomorphism $K(X) \to \KF(X)$ is introduced in Subsection \ref{subsec:vectorial_bundle}. The method showing the surjectivity of $\mathrm{ind}$ in \cite{A} allows us to realize any vector bundle $E \to X$ as $E = \bigcup_{x \in X} \Ker \hat{A}_x$ by means of a map $A : X \to \F(\H)$ such that $\sigma(\hat{A}^2_x) = \{ 0, 1 \}$ for all $x \in X$. Thus, the above diagram is commutative, so that Theorem \ref{thm:Furuta} implies the present lemma.
\end{proof}

\begin{lem} \label{lem:fda_untwisted_second}
For $(X, \mathrm{pt}) \in \cC$, the map $\alpha : [(X, \mathrm{pt}), (\F(\H), 1)] \to \KF(X, \mathrm{pt})$ is bijective.
\end{lem}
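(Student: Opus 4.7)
My plan is to mimic the proof of Lemma \ref{lem:fda_untwisted_first} in the pointed setting, by constructing the analogous commutative triangle
$$
\xymatrix{
 & [(X, \mathrm{pt}), (\F(\H), 1)] \ar[dl]_{\mathrm{ind}} \ar[dr]^{\alpha} & \\
K(X, \mathrm{pt}) \ar[rr] &   & \KF(X, \mathrm{pt}),
}
$$
where $\mathrm{ind}$ is the pair version of the Atiyah-Segal index map of \cite{A,A-Se} and the bottom arrow sends a virtual bundle (trivialized at $\mathrm{pt}$) to the associated vectorial bundle with $h = 0$. Because $h$ vanishes identically and the representing virtual bundle has rank $0$ at $\mathrm{pt}$, this vectorial bundle has support in $X \setminus \mathrm{pt}$, so the bottom arrow does land in $\KF(X, \mathrm{pt})$, and by Theorem \ref{thm:Furuta_pair} it is an isomorphism.

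Next I would show that $\mathrm{ind}$ is bijective by combining the unbased Atiyah-Segal isomorphism with the retraction $X \to \mathrm{pt}$. This retraction splits both $[X, \F(\H)] \to [\mathrm{pt}, \F(\H)]$ and $K(X) \to K(\mathrm{pt})$. The vanishing $\pi_1(\F(\H)) = K^{-1}(\mathrm{pt}) = 0$ makes the forgetful map from based to free homotopy classes injective with image the component of the identity, so $[(X, \mathrm{pt}), (\F(\H), 1)]$ is canonically identified with $\Ker([X, \F(\H)] \to [\mathrm{pt}, \F(\H)])$, which under the unbased ind is matched to $\Ker(K(X) \to K(\mathrm{pt})) = K(X, \mathrm{pt})$.

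Commutativity of the triangle is then verified as in Lemma \ref{lem:fda_untwisted_first}: following Atiyah's construction one can realize any class in $K(X, \mathrm{pt})$ by a map $A : X \to \F(\H)$ with $A_{\mathrm{pt}} = 1$ and $\sigma(\hat A_x^2) \subset \{0, 1\}$ for all $x \in X$. For any $\mu \in (0, 1)$, the finite-dimensional approximation of Proposition \ref{prop:fda} applied to $A$ produces the vectorial bundle $(\Ker \hat A, 0)$, which is exactly the image of $\mathrm{ind}([A]) = [\Ker \hat A]$ under the bottom arrow. Hence all three arrows in the triangle are isomorphisms, so $\alpha$ is bijective.

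The main difficulty I anticipate lies in the realization step: arranging simultaneously the normalization $A_{\mathrm{pt}} = 1$ and the spectral condition $\sigma(\hat A_x^2) \subset \{0, 1\}$ requires a careful adaptation of Atiyah's construction in \cite{A} so that the value of the family at the basepoint is not disturbed during the spectral truncation. Beyond this bookkeeping, everything is a direct pointed analogue of the unbased arguments.
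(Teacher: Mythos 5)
Your argument is correct in outline but takes a genuinely different route from the paper. The paper proves this lemma by comparing the exact sequences of the pair $(X,\mathrm{pt})$ for the two theories: it places $\alpha$ in the commutative ladder $[(I,\partial I),(\F(\H),1)]\to[(X,\mathrm{pt}),(\F(\H),1)]\to[X,\F(\H)]$ over $\KF(I,\partial I)\to\KF(X,\mathrm{pt})\to\KF(X)$, kills the left-hand terms using $\pi_1(\F(\H),1)=0$ and $\KF(I,\partial I)\cong K(I,\partial I)=0$, and then deduces the claim from Lemma \ref{lem:fda_untwisted_first}. You instead reduce directly to the absolute statements by building the pointed analogue of the triangle in Lemma \ref{lem:fda_untwisted_first}, using the retraction $X\to\mathrm{pt}$ and $\pi_1(\F(\H),1)=0$ to identify $[(X,\mathrm{pt}),(\F(\H),1)]$ with $\ker\bigl([X,\F(\H)]\to\pi_0(\F(\H))\bigr)$, and invoking Theorem \ref{thm:Furuta_pair} for the bottom edge. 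This buys independence from the relative exact-sequence machinery (Lemma \ref{lem:triple}, Proposition \ref{prop:exactness}) at the cost of redoing the realization and commutativity check in the pointed setting, which, as you note, is the delicate step. One genuine imprecision to fix: a vectorial bundle with $h=0$ has support equal to the closure of the locus where the underlying bundle is nonzero, so ``virtual rank $0$ at $\mathrm{pt}$'' does not place it in $\cKF(X,\mathrm{pt})$; the bottom arrow must be the map of Theorem \ref{thm:Furuta_pair}, defined on Atiyah triples $(F^0,F^1,\sigma)$ with $\sigma$ invertible near $\mathrm{pt}$ and sending them to $(F^0\oplus F^1,\sigma+\sigma^*)$. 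For the special representatives produced by your realization step this causes no trouble, since there $\Ker\hat A$ literally vanishes on a neighborhood of $\mathrm{pt}$, but the map itself should be stated in that form.
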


\begin{proof}
We use the exact sequences for $(X, \mathrm{pt})$. By Proposition \ref{prop:natural_transformation}, the diagram
$$
\begin{CD}
[(I, \partial I), (\F(\H), 1)] @>>>
[(X, \mathrm{pt}), (\F(\H), 1)] @>>>
[X, \F(\H)] \\
@VV{\alpha}V @VV{\alpha}V @VV{\alpha}V \\
\KF(I, \partial I) @>>> \KF(X, \mathrm{pt}) @>>> \KF(X).
\end{CD}
$$
is commutative. Because $\F(\H)$ is a representing space for $K$-theory \cite{A-Se}, we have $\pi_1(\F(\H), 1) = 0$. By Theorem \ref{thm:Furuta_pair}, we also have $\KF(I, \partial I) \cong K(I, \partial I) = 0$. Hence Lemma \ref{lem:fda_untwisted_first} leads to the present lemma.
\end{proof}

The following is also a result of Furuta:

\begin{lem}[\cite{F1}] \label{lem:collaption_KF}
If $X$ is compact and $Y \subset X$ is closed, then the quotient map $q : X \to X/Y$ induces an isomorphism $\KF(X, Y) \cong \KF(X/Y, \mathrm{pt})$.
\end{lem}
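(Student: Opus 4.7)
The plan is to construct an inverse to the pullback $q^* : \KF(X/Y, \mathrm{pt}) \to \KF(X, Y)$. That $q^*$ is well-defined on supports is immediate: for $\mathbb{E}' \in \cKF(X/Y, \mathrm{pt})$, one has $\mathrm{Supp}(q^*\mathbb{E}') = q^{-1}(\mathrm{Supp}\mathbb{E}') \subset X \setminus Y$, since $\mathrm{Supp}\mathbb{E}'$ is closed and avoids $\mathrm{pt}$.

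The crucial observation is that if $(E, h) \in \HF(U)$ has $h$ invertible at every point of $U$, then $(E, h)$ is isomorphic to the zero object $(0,0) \in \HF(U)$. Indeed, at each $x \in U$ the continuity of $h$ combined with invertibility produces $\mu > 0$ and an open neighborhood of $x$ on which $(E, h)_{y,<\mu} = 0$, so the zero maps $(E, h) \rightleftarrows (0,0)$ have compositions $\fallingdotseq$ to the identities. It follows that any $\mathbb{E} = (\mathcal{U}, (E_\alpha, h_\alpha), \phi_{\alpha\beta}) \in \cKF(X)$ with $\mathrm{Supp}(\mathbb{E}) \cap U = \emptyset$ admits an isomorphism $\mathbb{E}|_U \cong \mathbb{O}_U$ with the zero vectorial bundle over $U$.

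I define $q_* : \KF(X, Y) \to \KF(X/Y, \mathrm{pt})$ as follows. Given $\mathbb{E} \in \cKF(X, Y)$, normality of the compact Hausdorff $X$ yields an open $V$ with $Y \subset V$ and $\overline{V} \cap \mathrm{Supp}(\mathbb{E}) = \emptyset$; fix an isomorphism $\tau : \mathbb{E}|_V \cong \mathbb{O}_V$ from the observation. Since $q|_{X \setminus Y} : X \setminus Y \to (X/Y) \setminus \{\mathrm{pt}\}$ is a homeomorphism, transport $\mathbb{E}|_{X \setminus Y}$ to a vectorial bundle $\mathbb{F}$ on $(X/Y) \setminus \{\mathrm{pt}\}$. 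On $q(V)$, place the zero bundle $\mathbb{O}_{q(V)}$. The transported isomorphism $\tau|_{V \setminus Y}$ identifies $\mathbb{F}$ and $\mathbb{O}_{q(V)}$ on the overlap $q(V) \setminus \{\mathrm{pt}\}$, and Lemma \ref{lem:Meyer-Vietoris} glues these data into $\mathbb{E}' \in \cKF(X/Y, \mathrm{pt})$; set $q_*[\mathbb{E}] = [\mathbb{E}']$.

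Running the same recipe on objects of $\cKF(X \times I, Y \times I)$ shows $q_*$ is well-defined on homotopy classes. For $q^* q_* = \mathrm{id}$, the naturality of the Meyer-Vietoris construction displays $q^*\mathbb{E}'$ as the gluing of $\mathbb{E}|_{X \setminus Y}$ and $\mathbb{O}_V$ along $\tau$, which is isomorphic to $\mathbb{E}$ via $\tau$ itself. For $q_* q^* = \mathrm{id}$: given $\mathbb{E}'$, choose a neighborhood $V'$ of $\mathrm{pt}$ with $\overline{V'} \cap \mathrm{Supp}(\mathbb{E}') = \emptyset$, take $V = q^{-1}(V')$, and use the pullback of a trivialization $\mathbb{E}'|_{V'} \cong \mathbb{O}_{V'}$ as $\tau$; the construction then reassembles $\mathbb{E}'$. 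The main technical point is verifying that $[\mathbb{E}']$ is independent of the choices of $V$ and $\tau$: independence of $\tau$ follows because any two trivializations differ by an automorphism of $\mathbb{O}_V$, which is connected to the identity by a path, and independence of $V$ is handled by comparing two choices $V \subset V'$ via a further Meyer-Vietoris interpolation on $X \times I$.
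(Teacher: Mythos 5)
Your proof is correct and takes essentially the same route as the paper: the paper's (very terse) argument likewise rests on the homeomorphism $X - Y \cong (X/Y) - \mathrm{pt}$ and the resulting bijection of isomorphism classes, with the extension across $Y$, respectively $\mathrm{pt}$, supplied---exactly as you do---by trivializing the object where its support is absent and gluing via Lemma \ref{lem:Meyer-Vietoris}, just as in the paper's proof of excision. Your write-up simply makes explicit the details the paper leaves implicit.
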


\begin{proof}
Under the assumption, the topology of $(X/Y) - \mathrm{pt}$ induced from $X/Y$ coincides with the topology of $X - Y$ induced from $X$. Hence the isomorphism classes in $\cKF(X, Y)$ correspond bijectively to those in $\cKF(X/Y, \mathrm{pt})$ via $q$. Since this correspondence respects homotopies, the lemma is proved.
\end{proof}

\begin{proof}[Proof of Proposition \ref{prop:fda_untwisted}]
The quotient $q : X \to X/Y$ gives the diagram:
$$
\begin{CD}
[(X, Y), (\F(\H), 1)] @>{\alpha}>> \KF(X, Y) \\
@A{q^*}AA @AA{q^*}A \\
[(X/Y, \mathrm{pt}), (\F(\H), 1)] @>>{\alpha}> \KF(X/Y, \mathrm{pt}).
\end{CD}
$$
By Lemma \ref{lem:collaption_KF}, the right $q^*$ is bijective. Since $Y \to X$ is a cofibration, the left $q^*$ is also bijective, and the diagram above is commutative. Now Lemma \ref{lem:fda_untwisted_second} establishes  Proposition \ref{prop:fda_untwisted}.
\end{proof}


\subsection{Main theorem}

\begin{thm} \label{thm:main}
For a CW complex $X$ and a principal $PU(\H)$-bundle $P \to X$, the homomorphism $\alpha_{-n} : \ K_P^{-n}(X) \to \KF_P^{-n}(X)$, ($n \ge 0$) is bijective.
\end{thm}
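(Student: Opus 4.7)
The plan is to compare the two cohomology theories via the natural transformation $\alpha$ of Proposition \ref{prop:natural_transformation}, using induction on the cellular structure of $X$ together with the Five Lemma. For the base case $X = \mathrm{pt}$, any $PU(\H)$-bundle over a point is trivial; applying Proposition \ref{prop:fda_untwisted} to the pairs $(I^n, \partial I^n)$ immediately gives that $\alpha_{-n}$ is bijective in every degree $n \ge 0$.

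For the inductive step, I assume $X$ is built from a subcomplex $X'$ by attaching a collection of $k$-cells $\{ D^k_\lambda \}_\lambda$, and that $\alpha_{-m}$ is bijective on $X'$ for every $m \ge 0$. Since each $D^k_\lambda$ is contractible, $P$ restricts to a trivial bundle on it, so by the excision and additivity axioms of Propositions \ref{prop:twisted_K_cohomology} and \ref{prop:cohomology_KF}, the relative groups $K_P^{-n}(X, X')$ and $\KF_P^{-n}(X, X')$ both reduce to products of cell-wise factors of the form $K^{-n}(D^k, S^{k-1})$ and $\KF^{-n}(D^k, S^{k-1})$ with trivial twist. On each such factor, $\alpha$ is bijective by Proposition \ref{prop:fda_untwisted}. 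I then apply the Five Lemma to the commutative ladder
$$
\begin{CD}
K_P^{-n-1}(X') @>>> K_P^{-n}(X, X') @>>> K_P^{-n}(X) @>>> K_P^{-n}(X') @>>> K_P^{-n+1}(X, X') \\
@VV \alpha V @VV \alpha V @VV \alpha V @VV \alpha V @VV \alpha V \\
\KF_P^{-n-1}(X') @>>> \KF_P^{-n}(X, X') @>>> \KF_P^{-n}(X) @>>> \KF_P^{-n}(X') @>>> \KF_P^{-n+1}(X, X')
\end{CD}
$$
to conclude that $\alpha_{-n}$ is bijective on $X$. For infinite-dimensional $X$, I would pass to the colimit $X = \varinjlim_n X^{(n)}$ using a Milnor $\varprojlim{}^1$ sequence for both theories to extend the result from finite-dimensional skeleta to all of $X$.

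The principal obstacle is the non-exactness of the lower row at the term $\KF_P^0(X')$: by Proposition \ref{prop:cohomology_KF}, the bottom sequence is exact everywhere \emph{except possibly} at this spot. When $n = 0$ the gap falls squarely inside the Five Lemma window, so the standard Five Lemma does not apply verbatim. Nevertheless, a direct diagram chase shows that both injectivity and surjectivity of the middle $\alpha_0$ require only exactness of the lower row at the two terms $\KF_P^0(X, X')$ and $\KF_P^0(X)$ (which do hold) together with the complex condition $\delta_0 \circ i^* = 0$ at $\KF_P^0(X')$ (which also holds by Proposition \ref{prop:cohomology_KF}), combined with bijectivity of $\alpha$ on the four outer terms. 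Thus the Five Lemma conclusion persists through the gap, the inductive step is complete, and the theorem follows.
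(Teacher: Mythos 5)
Your inductive comparison over cell attachments is essentially the paper's argument (Lemma \ref{lem:main}), and your analysis of the Five Lemma is exactly right: surjectivity of the middle map only uses exactness of the bottom row at $\KF^0_P(X,X')$ and $\KF^0_P(X)$ plus the complex condition at $\KF^0_P(X')$, and injectivity only uses exactness of the bottom row at $\KF^0_P(X,X')$; this is precisely the paper's remark that ``the five lemma works even if the lower row is not exact at $\KF^0_{P|_Y}(Y)$.'' Your variant of the induction --- attaching all $k$-cells of a given dimension at once and invoking additivity --- is if anything cleaner, since it covers skeleta with infinitely many cells in one stroke, whereas the paper inducts on the number of cells and hence Lemma \ref{lem:main} literally covers only finite complexes.

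The genuine gap is the last step. You invoke ``a Milnor $\varprojlim^1$ sequence for both theories,'' but no such sequence has been established for $\KF_P^*$, and establishing one is not routine here: the standard derivations (Mayer--Vietoris on the telescope, or the long exact sequence of the pair (telescope, disjoint union of skeleta)) require exactness properties of $\KF_P^*$ in a range where Proposition \ref{prop:cohomology_KF} only provides a five-term sequence with a known failure of exactness at $\KF^0$, and no Bott periodicity yet. The paper sidesteps this entirely: it forms the telescope $\tilde{X} \simeq X$ with subcomplex $\tilde{Y} = \coprod_q X^q \times \{q\}$, writes down the \emph{same} five-term ladder for the pair $(\tilde{X}, \tilde{Y})$, identifies the four outer terms via the additivity axiom with products $\prod_q K^{-n}_{P|_{X^q}}(X^q)$ (resp.\ $\prod_q \KF^{-n}_{P|_{X^q}}(X^q)$ and the corresponding relative products), applies the already-proved skeleton-level isomorphisms there, and runs the Five Lemma once more. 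In other words, the telescope comparison \emph{replaces} the $\varprojlim^1$ sequence rather than presupposing it; you should finish your proof the same way (your dimension-wise induction feeds directly into it), rather than appealing to a Milnor sequence you cannot yet construct for $\KF$.
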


To prove this theorem, we begin with the case that $X$ is finite.

\begin{lem} \label{lem:main}
Let $X$ be a finite CW complex. Then, for a principal $PU(\H)$-bundle $P \to X$, the homomorphism $\alpha_{-n} : \ K_P^{-n}(X) \to \KF_P^{-n}(X)$, ($n \ge 0$) is bijective. 
\end{lem}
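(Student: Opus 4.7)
The plan is induction on the number of cells of $X$. For the base case $X = \mathrm{pt}$, the bundle $P$ is necessarily trivial, and $K^{-n}_P(\mathrm{pt}) = K(I^n, \partial I^n)$, $\KF^{-n}_P(\mathrm{pt}) = \KF(I^n, \partial I^n)$, so Proposition~\ref{prop:fda_untwisted} immediately gives bijectivity of $\alpha_{-n}$ for all $n \geq 0$.

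For the inductive step, write $X = Y \cup_\varphi D^k$, where $Y$ is a subcomplex with one fewer cell. Since $D^k$ is contractible, the bundle $P|_{D^k}$ is trivial; applying Proposition~\ref{prop:fda_untwisted} to the pair $(D^k \times I^n,\, D^k \times \partial I^n \cup S^{k-1} \times I^n)$ shows that $\alpha_{-n} : K^{-n}_{P|_{D^k}}(D^k, S^{k-1}) \to \KF^{-n}_{P|_{D^k}}(D^k, S^{k-1})$ is bijective for every $n \geq 0$. The excision axioms for both $K^*_P$ and $\KF^*_P$ identify these groups with $K^{-n}_P(X, Y)$ and $\KF^{-n}_P(X, Y)$ respectively, so $\alpha_{-n}$ is bijective on the pair $(X, Y)$. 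By the inductive hypothesis applied to $Y$, the map $\alpha_{-n}$ is also bijective on $Y$.

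Next, feed these bijections into the commutative ladder formed by the long exact sequences of the pair $(X, Y)$ and the natural transformation $\alpha$:
$$
\begin{CD}
K^{-n-1}_{P|_Y}(Y) @>>> K^{-n}_P(X,Y) @>>> K^{-n}_P(X) @>>> K^{-n}_{P|_Y}(Y) @>>> K^{-n+1}_P(X,Y) \\
@VV{\alpha}V @VV{\alpha}V @VV{\alpha}V @VV{\alpha}V @VV{\alpha}V \\
\KF^{-n-1}_{P|_Y}(Y) @>>> \KF^{-n}_P(X,Y) @>>> \KF^{-n}_P(X) @>>> \KF^{-n}_{P|_Y}(Y) @>>> \KF^{-n+1}_P(X,Y).
\end{CD}
$$
The four outer vertical arrows are bijective by the steps above; for the rightmost one when $n = 0$, note that $\KF^1_P(X,Y) = \KF^{-1}_P(X,Y)$ by definition and $K^1_P(X,Y) = K^{-1}_P(X,Y)$ by Bott periodicity in ordinary twisted $K$-theory. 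A standard five-lemma diagram chase then yields bijectivity of the middle vertical arrow $\alpha_{-n} : K^{-n}_P(X) \to \KF^{-n}_P(X)$, completing the induction.

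The main obstacle is that Bott periodicity for $\KF^*_P$ is \emph{not yet} available, so the bottom row of the ladder is only a partially exact complex, with the possible failure of exactness at the single term $\KF^0_{P|_Y}(Y)$. One must therefore verify that the five-lemma chase for both injectivity and surjectivity of $\alpha_{-n}$ at $K^{-n}_P(X)$ goes through without invoking exactness at that troublesome term: injectivity uses only exactness of the bottom row at $\KF^{-n}_P(X, Y)$, and surjectivity uses only exactness at $\KF^{-n}_P(X)$ together with the vanishing of the composite through $\KF^0_{P|_Y}(Y)$ (which holds because the sequence is a complex). Both of these required properties are guaranteed by Proposition~\ref{prop:cohomology_KF}, so the partial exactness suffices.
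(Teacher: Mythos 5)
Your proof is correct and follows essentially the same route as the paper: induction on the number of cells, excision to reduce the relative term to $(D^q, S^{q-1})$ where the bundle is trivial, Proposition \ref{prop:fda_untwisted} for the outer vertical maps, and a five-lemma chase. Your explicit verification that the chase survives the possible failure of exactness at $\KF^0_{P|_Y}(Y)$ is exactly the point the paper disposes of in one sentence, and your accounting of which exactness properties are actually used is accurate.
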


\begin{proof}
We prove this lemma by an induction on the number $r$ of cells in $X$. If $r = 1$, then $X$ consists of a point, so that $\alpha_{-n}$, ($n \ge 0$) is bijective by Proposition \ref{prop:fda_untwisted}. If $r > 1$, then we can express $X$ as $X = e^q \cup Y$, where $e^q$ is a $q$-dimensional cell, and $Y$ is a subcomplex with $(r-1)$ cells. Proposition \ref{prop:natural_transformation} gives the following commutative diagram for $n \ge 0$.
$$
\xymatrix@C=10.5pt{
K^{-n-1}_{P|_Y}(Y) \ar[r] \ar[d] &
K^{-n}_P(X, Y) \ar[r] \ar[d] &
K^{-n}_P(X) \ar[r] \ar[d]^{\alpha_{-n}} &
K^{-n}_{P|_Y}(Y) \ar[r] \ar[d] &
K^{-n+1}_P(X, Y) \ar[d] \\
\KF^{-n-1}_{P|_Y}(Y) \ar[r] &
\KF^{-n}_P(X, Y) \ar[r] &
\KF^{-n}_P(X) \ar[r] &
\KF^{-n}_{P|_Y}(Y) \ar[r] &
\KF^{-n+1}_P(X, Y)
}
$$
As the hypothesis of the induction, we assume the first and forth vertical maps are bijective. The excision axiom gives $K_P^{-n}(X, Y) \cong K_{P|_{D^q}}^{-n}(D^q, S^{q-1})$. Similarly, $\KF_P^{-n}(X, Y) \cong \KF_{P|_{D^q}}^{-n}(D^q, S^{q-1})$. Since every principal $PU(\H)$-bundles over the disk $D^q$ are trivial, Proposition \ref{prop:fda_untwisted} implies that the second and fifth maps are bijective. Therefore the third map is also bijective. Notice that the five lemma works even if the lower row is not exact at $\KF_{P|_Y}^0(Y)$.
\end{proof}

\begin{proof}[Proof of Theorem \ref{thm:main}]
Let $X^q \subset X$ be the subcomplex consisting of cells whose dimensions are less than or equal to $q$. Identifying $X^q \times \{ q + 1 \} \subset X^q \times [q, q+1]$ with $X^q \times \{ q + 1 \} \subset X^{q+1} \times [q+1, q+2]$, we get the ``telescope'' $\tilde{X}$ of $X$:
$$
\tilde{X} = 
X^0 \times [0, 1] \cup X^1 \times [1, 2] \cup X^2 \times [2, 3] \cup \cdots.
$$
In a similar way, we get a principal $PU(\H)$-bundle $\tilde{P} \to \tilde{X}$ from $P \to X$. As is known \cite{M}, the projections $X^q \times [q, q+1] \to X^q$ induce a homotopy equivalence $\varpi : \tilde{X} \to X$. Since $\varpi^*P \cong \tilde{P}$, we have $K_{\tilde{P}}^{-n}(\tilde{X}) \cong K_P^{-n}(X)$ as well as $\KF_{\tilde{P}}^{-n}(\tilde{X}) \cong \KF_P^{-n}(X)$. Thus, to show Theorem \ref{thm:main}, it suffices to prove that $\alpha_{-n} : K_{\tilde{P}}^{-n}(\tilde{X}) \to \KF_{\tilde{P}}^{-n}(\tilde{X})$ is bijective for $n \ge 0$. For this purpose, we let $\tilde{Y}$ be the subcomplex in $\tilde{X}$ given by $\tilde{Y} = \coprod_q X^q = \coprod_q X^q \times \{ q \}$. For $n \ge 0$, Proposition \ref{prop:natural_transformation} gives:
$$
\xymatrix@C=10.5pt{
K^{-n-1}_{\tilde{P}|_{\tilde{Y}}}(\tilde{Y}) \ar[r] \ar[d] &
K^{-n}_{\tilde{P}}(\tilde{X}, \tilde{Y}) \ar[r] \ar[d] &
K^{-n}_{\tilde{P}}(\tilde{X}) \ar[r] \ar[d]^{\alpha_{-n}} &
K^{-n}_{\tilde{P}|_{\tilde{Y}}}(\tilde{Y}) \ar[r] \ar[d] &
K^{-n+1}_{\tilde{P}}(\tilde{X}, \tilde{Y}) \ar[d] \\
\KF^{-n-1}_{\tilde{P}|_{\tilde{Y}}}(\tilde{Y}) \ar[r] &
\KF^{-n}_{\tilde{P}}(\tilde{X}, \tilde{Y}) \ar[r] &
\KF^{-n}_{\tilde{P}}(\tilde{X}) \ar[r] &
\KF^{-n}_{\tilde{P}|_{\tilde{Y}}}(\tilde{Y}) \ar[r] &
\KF^{-n+1}_{\tilde{P}}(\tilde{X}, \tilde{Y}).
}
$$
The first and forth columns in the commutative diagram above are bijective: the additivity axiom in Proposition \ref{prop:twisted_K_cohomology} implies
$$
K^{-n}_{\tilde{P}|_{\tilde{Y}}}(\tilde{Y}) \cong
K^{-n}_{\coprod_q P|_{X^q}} (\tcoprod_q X^q) \cong
\prod_q K^{-n}_{P|_{X^q}}(X^q).
$$
Similarly, we have $\KF^{-n}_{\tilde{P}|_{\tilde{Y}}}(\tilde{Y}) \cong \prod_q \KF^{-n}_{P|_{X^q}}(X^q)$. Because $X^q$ is a finite CW complex, the map $\alpha_{-n} : K^{-n}_{P|_{X^q}}(X^q) \to \KF^{-n}_{P|_{X^q}}(X^q)$ is bijective by Lemma \ref{lem:main}, and so is $\alpha_{-n} : \prod_q K^{-n}_{P|_{X^q}}(X^q) \to \prod_q \KF^{-n}_{P|_{X^q}}(X^q)$. The second and fifth columns can be shown to be bijective in the same way, since we have
\begin{align*}
K^{-n}_{\tilde{P}}(\tilde{X}, \tilde{Y}) 
&\cong
K^{-n}_{\coprod_q P|_{X^q} \times I}
(\tcoprod_q X^q \times I, \ \tcoprod_q X^q \times \partial I) \\
&\cong
\prod_q 
K^{-n}_{P|_{X^q} \times I}(X^q \times I, \ X^q \times \partial I) 
=
\prod_q K^{-n-1}_{P|_{X^q}}(X^q).
\end{align*}
Now, the five lemma leads to the bijectivity of the third.
\end{proof}

We have Theorem \ref{ithm:main} by setting $n = 0$ in Theorem \ref{thm:main}. We also have:

\begin{cor}[Bott periodicity]
Under the assumption in Theorem \ref{thm:main}, there is a natural isomorphism $\KF_P^{-n}(X) \cong \KF_P^{-n-2}(X)$ for $n \ge 0$.
\end{cor}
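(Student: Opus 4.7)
The plan is to exhibit a natural multiplication-by-Thom-class map $\beta$ on $\KF_P^{-n}(X)$, show it intertwines with the Bott periodicity map on $K_P^{-n}(X)$ via the natural transformation $\alpha$, and then invoke Theorem \ref{thm:main} together with the K-theoretic Bott periodicity from Proposition \ref{prop:twisted_K_cohomology}(5).

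First I would define $\beta_{\KF} : \KF_P^{-n}(X) \to \KF_P^{-n-2}(X)$ by exactly the recipe used at the end of Subsection 4.3 for the case $n=0$: multiply a representing $(P \times I^n)$-twisted vectorial bundle over $X \times I^n$ by the Thom class $(F,T)$ on $D^2$, using the $\Z_2$-graded tensor product and the map $h \otimes 1 + \epsilon \otimes T$. Since $\KF_P^{-n}(X)$ is by definition $\KF_{P \times I^n}(X\times I^n, X \times \partial I^n)$, this gives a functor into $\cKF_{P \times I^n \times D^2}((X \times I^n) \times D^2, X \times I^n \times S^1 \cup X \times \partial I^n \times D^2)$, which after reshuffling the $D^2 = I^2$ factor and applying excision/homotopy invariance lands in $\KF_P^{-n-2}(X)$. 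Naturality with respect to pullbacks is automatic from the construction.

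Next I would verify the commutative square
\[
\begin{CD}
K_P^{-n}(X) @>{\beta_K}>> K_P^{-n-2}(X) \\
@V{\alpha_{-n}}VV @VV{\alpha_{-n-2}}V \\
\KF_P^{-n}(X) @>{\beta_{\KF}}>> \KF_P^{-n-2}(X).
\end{CD}
\]
This is precisely the content of Lemma \ref{lem:compatible_with_periodicity} in the case $n=0$; the general $n \ge 0$ case reduces to that case by applying the $n=0$ lemma to the pair $(X \times I^n, X \times \partial I^n)$ and the principal bundle $P \times I^n$. The pointwise spectral identification
\[
(\hat{\H} \otimes (\C \oplus \C),\ \hat{A}_x \otimes 1 + \epsilon \otimes T_z)_{< \mu + s}
\;\cong\;
(\hat{\H},\ \hat{A}_x)_{< \mu} \otimes (\C \oplus \C)
\]
used in the proof of that lemma works verbatim in this extended setting, so the square commutes.

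Finally, Theorem \ref{thm:main} says the vertical arrows $\alpha_{-n}$ and $\alpha_{-n-2}$ are bijective, and Proposition \ref{prop:twisted_K_cohomology}(5) says the top arrow $\beta_K$ is a natural isomorphism. A diagram chase through the commutative square then forces $\beta_{\KF}$ to be an isomorphism as well, and the isomorphism is natural because all three other maps are. The only mildly subtle point, and the one I would be most careful about, is bookkeeping the identification $\KF_P^{-n-2}(X) = \KF_{P \times I^n \times D^2}(X \times I^n \times D^2,\ X \times I^n \times S^1 \cup X \times \partial I^n \times D^2)$ (via $D^2 \simeq I^2$ rel boundary and the excision/homotopy axioms from Proposition \ref{prop:cohomology_KF}), since that identification is what makes $\beta_{\KF}$ land in the correct target group; but this is purely a matter of rewriting indices and poses no real difficulty.
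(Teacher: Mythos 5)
Your proof is correct and amounts to the paper's (implicit) argument: the corollary follows by transporting the Bott periodicity of twisted $K$-theory (Proposition \ref{prop:twisted_K_cohomology}(5)) through the bijections $\alpha_{-n}$ and $\alpha_{-n-2}$ supplied by Theorem \ref{thm:main}. Your additional step of realizing the isomorphism as the explicit Thom-class multiplication $\beta_{\KF}$ --- the map already constructed at the end of Subsection \ref{subsec:exactness_axiom}, with Lemma \ref{lem:compatible_with_periodicity} extended to pairs by the same pointwise spectral computation --- is sound but not needed for the bare statement.
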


Thus, on CW complexes, we can extend Proposition \ref{prop:cohomology_KF} to get a cohomology theory $\{ \KF_P^n(X, Y) \}_{n \in \Z}$ equivalent to twisted $K$-cohomology.




\appendix

\section{Proof of Furuta's theorem}

We provide proof of Furuta's theorems (Theorem \ref{thm:Furuta} and \ref{thm:Furuta_pair}) culling from \cite{F2}. Most parts of this appendix is devoted to the proof of Theorem \ref{thm:Furuta}, since the proof of Theorem \ref{thm:Furuta_pair} is almost the same. To prove Theorem \ref{thm:Furuta}, we begin with preliminaries in Subsection \ref{subsec:appendix_preliminary}, and then construct a vector bundle from a vectorial bundle in Subsection \ref{subsec:appendix_vector_bundle}. We use the vector bundle to prove that the map $K(X) \to \KF(X)$ is surjective in Subsection \ref{subsec:appendix_surjective}. Finally, the injectivity of $K(X) \to \KF(X)$ is shown in Subsection \ref{subsec:appendix_injective}.


\subsection{Preliminary}
\label{subsec:appendix_preliminary}

Let $\mathbb{E} = (\{ U_\alpha \}_{\alpha \in \A}, (E_\alpha, h_\alpha), \phi_{\alpha \beta}) \in \cKF(X)$ be a vectorial bundle on a compact space $X$. Taking a finer open cover if necessary, we can assume that $E_\alpha$ is a trivial bundle $E_\alpha = U_\alpha \times V_\alpha$, where $V_\alpha = V_\alpha^0 \oplus V_\alpha^1$ is a $\Z_2$-graded Hermitian vector space of finite rank. Since $X$ is compact, we can also assume that $\{ U_\alpha \}_{\alpha \in \A}$ is a finite cover of $X$. For $x \in X$, we put $\A(x) = \{ \alpha \in \A |\ x \in U_\alpha \}$. 

\begin{lem}
There is a positive number $\lambda$ such that: for $x \in X$ and $\alpha, \beta \in \A(x)$, the map $(\phi_{\alpha \beta})_x : V_\beta \to V_\alpha$ induces an isomorphism 
$$
(V_\beta, (h_\beta)_x)_{< \lambda} \cong
(V_\alpha, (h_\alpha)_x)_{< \lambda}.
$$
\end{lem}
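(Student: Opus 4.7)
The plan is to promote the pointwise local data carried by the equivalence relation $\fallingdotseq$ to a single global spectral cutoff $\lambda > 0$, using that $X$ is compact and the cover $\{U_\alpha\}_{\alpha \in \A}$ is finite. Two observations drive the argument. First, any representative $\phi_{\alpha\beta}\colon E_\beta \to E_\alpha$ satisfies the literal identity $\phi_{\alpha\beta} h_\beta = h_\alpha \phi_{\alpha\beta}$, and hence carries eigenspaces of $(h_\beta)_x^2$ into eigenspaces of $(h_\alpha)_x^2$ at the same eigenvalue; in particular it restricts, for any $\lambda > 0$, to a linear map $(V_\beta,(h_\beta)_x)_{<\lambda} \to (V_\alpha,(h_\alpha)_x)_{<\lambda}$ between finite-dimensional spaces, and the analogous statement holds for $\phi_{\beta\alpha}$. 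Second, each cocycle identity $\phi_{\alpha\beta}\phi_{\beta\alpha}\fallingdotseq 1$ on $U_{\alpha\beta}$ supplies, at each point, a positive local cutoff below which the composition acts as the identity. The content of the lemma reduces to producing a single such cutoff that works uniformly.

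I would proceed by a compactness argument. For each $x \in X$ the finiteness of $\A(x)$ yields only finitely many pairs $(\alpha,\beta) \in \A(x)^2$; for each such pair the relation $\fallingdotseq$ at $x$ produces $\mu_{\alpha\beta}(x) > 0$ and an open neighborhood $W_{\alpha\beta}(x) \subseteq U_{\alpha\beta}$ of $x$ on which $(\phi_{\alpha\beta}\phi_{\beta\alpha})_y = \mathrm{id}$ holds on $(V_\alpha,(h_\alpha)_y)_{<\mu_{\alpha\beta}(x)}$. Set $\mu(x) = \min_{(\alpha,\beta)\in\A(x)^2}\mu_{\alpha\beta}(x) > 0$ and $W(x) = \bigcap_{(\alpha,\beta)\in\A(x)^2} W_{\alpha\beta}(x)$, so that $W(x)$ is an open neighborhood of $x$, contained in $\bigcap_{\alpha\in\A(x)} U_\alpha$, on which every pair in $\A(x)^2$ is simultaneously controlled by cutoff $\mu(x)$. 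By compactness of $X$, finitely many $W(x_1), \ldots, W(x_N)$ cover $X$, and the desired cutoff is
\[
\lambda \;=\; \min_{1 \le i \le N} \mu(x_i) \;>\; 0.
\]
Given $x \in X$ and $\alpha,\beta \in \A(x)$, choosing $i$ with $x \in W(x_i)$ and $(\alpha,\beta) \in \A(x_i)^2$ gives $(\phi_{\alpha\beta}\phi_{\beta\alpha})_x = \mathrm{id}$ on $(V_\alpha,(h_\alpha)_x)_{<\lambda}$ and, symmetrically, $(\phi_{\beta\alpha}\phi_{\alpha\beta})_x = \mathrm{id}$ on $(V_\beta,(h_\beta)_x)_{<\lambda}$; combined with the intertwining property, these exhibit $(\phi_{\alpha\beta})_x$ and $(\phi_{\beta\alpha})_x$ as mutually inverse isomorphisms of the required subspaces.

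The delicate step, and the one I expect to be the main obstacle, is guaranteeing the existence of such a basepoint $x_i$, since a point $y \in W(x_i)$ may a priori lie in additional $U_\gamma$ with $\gamma \notin \A(x_i)$, producing pairs in $\A(y)^2$ not accounted for by the data chosen at $x_i$. This is handled by a cover-refinement argument: using that the compact Hausdorff space $X$ is normal, one may pass at the outset to a closed shrinking $\{K_\alpha\}$ of $\{U_\alpha\}$ still covering $X$ and shrink each $W(x)$ so that it is disjoint from every $K_\gamma$ with $x \notin K_\gamma$, thereby forcing the index-membership function $y \mapsto \A(y)$ to be suitably compatible with the basepoint $\A(x_i)$. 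Once this refinement is in place, the compactness extraction above proceeds without further complications, and the lemma follows.
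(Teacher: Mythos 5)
Your argument follows the same route as the paper's own (three-line) proof: use the relation $\fallingdotseq$ to extract, at each point $x$, a single cutoff and neighborhood controlling the finitely many pairs in $\A(x)^2$, pass to a finite subcover by compactness, and take the minimum of the cutoffs. The ``delicate step'' you flag is genuine and is simply not addressed in the paper; your shrinking repair is the right one, with the understanding that it delivers the conclusion for the index sets $\{\alpha \mid x \in K_\alpha\}$ of the closed shrinking rather than for $\A(x)$ itself --- which is harmless, since the appendix has already permitted itself to refine the cover at the outset and the subsequent construction of $(E)_x$ only requires the shrunk version.
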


\begin{proof}
By the definition of vectorial bundles, we can find, for each point $x \in X$, an open neighborhood $U_x$ of $x$ and a positive number $\lambda_x$ such that: for $y \in U_x$ and $\alpha, \beta \in \A(x)$, the map $(\phi_{\alpha \beta})_y$ induces an isomorphism $(V_\beta, (h_\beta)_y)_{< \lambda_x} \cong (V_\alpha, (h_\alpha)_y)_{< \lambda_x}$. Because $X$ is compact, we can choose a finite number of points $x_1, \ldots, x_n \in X$ so that $U_{x_1}, \ldots, U_{x_n}$ cover $X$. The minimum among $\lambda_{x_1}, \ldots, \lambda_{x_n}$ gives the $\lambda$.
\end{proof}

We choose and fix a positive number $\lambda$ in the lemma above. Then, for $x \in X$, we define a $\Z_2$-graded Hermitian vector space $(E)_x$ to be
$$
(E)_x = 
\coprod_{\alpha \in \A(x)} (V_{\alpha}, (h_\alpha)_x)_{<\lambda}/\sim,
$$
where $\sim$ is an equivalence relation: for $v_\alpha \in V_\alpha$ and $v_\beta \in V_\beta$, we have $v_\alpha \sim v_\beta$ if and only if $(\phi_{\alpha \beta})_x v_\beta = v_\alpha$. We also define a Hermitian map of degree $1$
$$
(h_E)_x : (E)_x \to (E)_x, \quad [v_\alpha] \mapsto [(h_\alpha)_x v_\alpha],
$$
where $[v_\alpha]$ stands for the element in $(E)_x$ represented by $v_\alpha \in (V_\alpha, (h_\alpha)_x)_{<\lambda}$. 

\begin{lem}
For a point $x_0 \in X$ and a number $\mu$ such that $\mu \in (0, \lambda) \cap \rho((h_E)^2_{x_0})$, there is an open neighborhood $U$ of $x_0$ on which the family of vector spaces 
$$
\bigcup_{x \in U} ((E)_x, (h_E)_x)_{< \mu}
$$ 
gives rise to a vector bundle.
\end{lem}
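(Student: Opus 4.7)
My plan is to reduce the statement to the finite-dimensional analog of Proposition \ref{prop:fda} on a single trivial bundle. Since $x_0 \in U_\alpha$ for some $\alpha \in \A(x_0)$, I would shrink $U$ to a neighborhood of $x_0$ contained in $U_\alpha$. On such a $U$, the assignment $v_\alpha \mapsto [v_\alpha]$ sends $(V_\alpha, (h_\alpha)_x)_{<\lambda}$ bijectively onto $(E)_x$ for every $x \in U$, and intertwines the restriction of $(h_\alpha)_x$ with $(h_E)_x$; both facts are immediate from the preceding lemma on $\lambda$ and from the definition of $(E)_x$ and $(h_E)_x$. So the picture on $U$ is that $(E)_x$ is realized as a concrete (but $x$-dependent) subspace of the fixed finite-dimensional vector space $V_\alpha$.

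Under this realization, the eigenvalues of $((h_E)_x)^2$ are exactly the eigenvalues of $((h_\alpha)_x)^2$ that are strictly below $\lambda$. Because $\mu < \lambda$ by hypothesis, the assumption $\mu \in \rho(((h_E)_{x_0})^2)$ forces $\mu$ to be outside the spectrum of $((h_\alpha)_{x_0})^2$ as well, and the subspace $((E)_x, (h_E)_x)_{<\mu}$ is literally equal to $(V_\alpha, (h_\alpha)_x)_{<\mu}$ viewed inside $V_\alpha$. The problem therefore reduces to the following finite-dimensional statement: for a continuous family $x \mapsto (h_\alpha)_x$ of Hermitian operators on $V_\alpha$, with $\mu$ in the resolvent set of $((h_\alpha)_{x_0})^2$, the family $\bigcup_{x \in U'} (V_\alpha, (h_\alpha)_x)_{<\mu}$ is a locally trivial subbundle of the trivial bundle $U' \times V_\alpha$ near $x_0$.

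This is precisely the finite-dimensional shadow of Proposition \ref{prop:fda}, so the argument used there transcribes verbatim. Concretely, I would (i) observe that the dimension of the fiber is locally constant via the min-max characterization of the eigenvalues of $((h_\alpha)_x)^2$, (ii) show that the orthogonal projection $\pi_x$ onto the fiber is norm-continuous in $x$ by writing it as the Dunford-Cauchy integral $\frac{1}{2\pi i} \int_C (z - ((h_\alpha)_x)^2)^{-1} dz$ around a contour separating the eigenvalues below $\mu$ from those above, and (iii) build a local trivialization via the orthogonal projection onto the reference fiber $(V_\alpha, (h_\alpha)_{x_0})_{<\mu}$, which is an isomorphism on every nearby fiber since it is so at $x_0$ and both sides have the same dimension. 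The only obstacle worth mentioning is a small amount of bookkeeping to check that this local construction is independent of the auxiliary choice of $\alpha \in \A(x_0)$; but this is automatic, since the lemma preceding the statement says that the $\phi_{\alpha\beta}$'s are isomorphisms between the $<\lambda$ subspaces and hence, \emph{a fortiori}, between the $<\mu$ subspaces, so the resulting vector bundle structure on $\bigcup_{x} ((E)_x, (h_E)_x)_{<\mu}$ does not depend on which chart we trivialize in.
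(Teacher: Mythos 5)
Your proposal is correct and follows essentially the same route as the paper: fix $\alpha \in \A(x_0)$, identify $(E)_x$ with $(V_\alpha,(h_\alpha)_x)_{<\lambda}$ via $v_\alpha \mapsto [v_\alpha]$ on a neighborhood contained in $U_\alpha$, and run the argument of Proposition \ref{prop:fda} (locally constant dimension via min-max, continuity of the spectral projection via the contour integral, trivialization by projecting onto the reference fiber) on the trivial bundle $U_\alpha \times V_\alpha$. The paper's proof is exactly this, stated more tersely; your extra remark on independence of the choice of $\alpha$ is a harmless elaboration.
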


We remark that the family of vector spaces $\bigcup_{x \in X} (E)_x$ is not generally a vector bundle over $X$ since the dimension of $(E)_x$ may jump as $x$ varies.

\begin{proof}
We take and fix $\alpha \in \A(x_0)$. The same argument as in Subsection \ref{subsec:fda} implies that there is an open neighborhood $U$ of $x_0$ on which $\bigcup_{x \in U} (V_\alpha, (h_\alpha)_x)_{<\mu}$ gives rise to a vector bundle over $U$. Hence the natural bijection between $\bigcup_{x \in U} (V_\alpha, (h_\alpha)_x)_{<\mu}$ and $\bigcup_{x \in U} ((E)_x, (h_E)_x)_{< \mu}$ establishes the lemma. 
\end{proof}


\subsection{Construction of vector bundle}
\label{subsec:appendix_vector_bundle}

Let $\{ \rho_0^2, \rho_\infty^2 \}$ be a partition of unity subordinate to the open cover $\{ [0, \lambda), (0, \infty) \}$ of $[0, \infty)$. For $x \in X$ and $\alpha \in \A(x)$, the functional calculus induces the Hermitian map $\rho_0^2((h_\alpha)_x^2) : V_\alpha \to V_\alpha$. We can think of $\rho_0^2((h_\alpha)^2_x)$ as an ``approximation'' of the orthogonal projection onto $(V_\alpha, (h_\alpha)_x)_{<\lambda}$. In fact, the image of $V_\alpha$ under $\rho_0^2((h_\alpha)^2_x)$ is in $(V_\alpha, (h_\alpha)_x)_{<\lambda}$. In particular, the image of the odd part $V_\alpha^1$ is in $(V_\alpha, (h_\alpha)_x)_{<\lambda}^1 = V_\alpha^1 \cap (V_\alpha, (h_\alpha)_x)_{<\lambda}$, since $\rho_0^2((h_\alpha)^2_x)$ is of degree $0$. 

\medskip

Now, we use a partition of unity $\{ \varrho_\alpha \}_{\alpha \in \A}$ subordinate to the open cover $\{ U_\alpha \}_{\alpha \in \A}$ of $X$ to define the linear map $(g)_x$, ($x \in X$) as follows:
$$
(g)_x : \
\bigoplus_{\alpha \in \A} V_\alpha^1 \longrightarrow 
(E^1)_x, \quad
\tbigoplus_\alpha v_\alpha^1 \mapsto 
\sum_\alpha \varrho_\alpha(x) [\rho_0^2((h_\alpha)^2_x) v_\alpha^1],
$$
where $(E)_x = (E^0)_x \oplus (E^1)_x$ and $(E^i)_x = \coprod_{\alpha \in \A(x)} (V_{\alpha}, (h_\alpha)_x)^i_{<\lambda}/\sim$. By means of $(g)_x$, we also define the linear maps
$$
(g^i)_x : \ 
(E^i)_x \oplus \bigoplus_{\alpha \in \A} V_\alpha^1
\longrightarrow
(E^1)_x
$$
to be
\begin{align*}
(g^0)_x(v^0 \oplus (\tbigoplus_\alpha v_\alpha^1)) &=
\rho_\infty((h_E)^2_x) (h_E)_x v^0 + 
(g)_x(\tbigoplus_\alpha v_\alpha^1), \\
(g^1)_x(v^1 \oplus (\tbigoplus_\alpha v_\alpha^1)) &=
\rho_\infty((h_E)^2_x) v^1 + (g)_x(\tbigoplus_\alpha v_\alpha^1).
\end{align*}

\begin{lem} \label{lem:appendix_surjectivity}
The maps $(g^0)_x$ and $(g^1)_x$ are surjective at each $x \in X$.
\end{lem}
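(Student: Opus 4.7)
The plan is to use the partition-of-unity identity $\rho_0^2+\rho_\infty^2=1$ on $[0,\infty)$ to split any $w\in(E^1)_x$ via continuous functional calculus as
\[
w=\rho_0^2((h_E)^2_x)w+\rho_\infty^2((h_E)^2_x)w,
\]
and then arrange the two terms in each $(g^i)_x$ to hit these two pieces independently: the summation will produce the $\rho_0^2$-piece, and the leading term with $\rho_\infty$ will produce the $\rho_\infty^2$-piece.

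For the $\rho_0^2$-piece I would, for each $\alpha\in\A(x)$, choose a preimage $v_\alpha^1\in(V_\alpha,(h_\alpha)_x)^1_{<\lambda}\subset V_\alpha^1$ of $w$ under the quotient $(V_\alpha,(h_\alpha)_x)_{<\lambda}\twoheadrightarrow(E)_x$, and set $v_\alpha^1=0$ otherwise. The key point to verify is that this quotient intertwines $(h_\alpha)_x$ with $(h_E)_x$; this is built into the construction, since the $\phi_{\alpha\beta}$ are morphisms in $\HF$ and hence commute with the $h_\alpha$. Continuous functional calculus then gives $[\rho_0^2((h_\alpha)^2_x)v_\alpha^1]=\rho_0^2((h_E)^2_x)w$, and combining with $\sum_{\alpha\in\A(x)}\varrho_\alpha(x)=1$ shows that the partition-of-unity sum in $(g^i)_x$ equals exactly $\rho_0^2((h_E)^2_x)w$.

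For the $\rho_\infty^2$-piece, the case of $(g^1)_x$ is immediate: take $v^1=\rho_\infty((h_E)^2_x)w$, and apply $\rho_\infty((h_E)^2_x)$ once more. For $(g^0)_x$ I need a $v^0\in(E^0)_x$ with $(h_E)_x v^0=\rho_\infty((h_E)^2_x)w$; here I would use that $\rho_\infty(0)=0$, so that $\rho_\infty((h_E)^2_x)w$ is orthogonal to $\ker\bigl((h_E)^2_x|_{(E^1)_x}\bigr)=\ker\bigl((h_E)_x|_{(E^1)_x}\bigr)$ (the last equality by self-adjointness of the degree-$1$ map $(h_E)_x$), and hence lies in $\operatorname{Im}\bigl((h_E)_x\colon(E^0)_x\to(E^1)_x\bigr)$, producing the required $v^0$. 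Summing the two contributions then yields $w$, establishing surjectivity.

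The only point needing real care is the intertwining of functional calculi across the quotient $(V_\alpha,(h_\alpha)_x)_{<\lambda}\to(E)_x$, which reduces via Stone–Weierstrass to the fact, immediate from the definition of $\HF$, that the quotient commutes with $(h_\alpha)_x$; once this is granted, the rest is a routine partition-of-unity bookkeeping plus a standard appeal to self-adjointness for the existence of $v^0$.
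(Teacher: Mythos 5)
Your proof is correct and follows essentially the same route as the paper's: both rest on the identity $\rho_0^2+\rho_\infty^2=1$ together with the functional calculus for $(h_E)_x$, the paper organizing the argument as a case analysis over the eigenspaces of $(h_E)_x^2$ while you split a general $w$ directly into its $\rho_0^2$- and $\rho_\infty^2$-parts and exhibit explicit preimages. The supporting facts you invoke --- that each quotient $(V_\alpha,(h_\alpha)_x)_{<\lambda}\to(E)_x$ is an isomorphism intertwining $(h_\alpha)_x$ with $(h_E)_x$, and that $\mathrm{Im}\bigl((h_E)_x\colon(E^0)_x\to(E^1)_x\bigr)=\Ker\bigl((h_E)_x^2|_{(E^1)_x}\bigr)^\perp$ --- are all valid.
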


\begin{proof}
Consider the eigenspace decomposition $(E^1)_x = \bigoplus_\kappa \Ker((h_E)^2_x - \kappa)$. For each $\kappa$, we have $\rho_0^2(\kappa) \neq 0$ or $\rho_\infty^2(\kappa) \neq 0$. In the case of $\rho_0^2(\kappa) \neq 0$, we can see $\Ker((h_E)^2_x - \kappa) \subset (g)_x(V^1_{\alpha})$ for an $\alpha \in \A(x)$ such that $\varrho_\alpha(x) \neq 0$. In the case of $\rho_\infty^2(\kappa) \neq 0$, we clearly have $\Ker((h_E)^2_x - \kappa) \subset (g^1)_x((E^1)_x)$. Since $\kappa \neq 0$ in this case, we also have $\Ker((h_E)^2_x - \kappa) \subset (g^0)_x((E^0)_x)$.
\end{proof}

Accordingly, we have the following exact sequence at each $x \in X$:
$$
\begin{CD}
0 @>>> 
(F^i)_x @>>> 
(E^i)_x \oplus 
{\displaystyle \bigoplus_{\alpha \in \A}} V_\alpha^1 @>{(g^i)_x}>>
(E^1)_x @>>>
0,
\end{CD}
$$
where $(F^i)_x = \Ker (g^i)_x$. The exact sequence implies that $\dim (F^i)_x$ is locally constant in $x$, because $\dim (E^0)_x - \dim (E^1)_x$ is. While the family of vector spaces $\bigcup_{x \in X} (E^i)_x$ is not generally a vector bundle, we have:

\begin{prop}
For $i = 0, 1$, the family of vector spaces $F^i = \bigcup_{x \in X} (F^i)_x$ gives rise to a vector bundle over $X$.
\end{prop}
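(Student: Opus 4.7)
The plan is to establish local triviality of $F^i$ at an arbitrary fixed point $x_0 \in X$. The key observation is that, on a suitable open neighborhood $U$ of $x_0$, the families $(E)_x$ and $(E^1)_x$ assemble into honest $\Z_2$-graded vector bundles $E|_U$ and $E^1|_U$; once this is arranged, $(g^i)|_U$ becomes a continuous surjection of finite-rank vector bundles, and its kernel $F^i|_U$ is automatically a sub-bundle.

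To build $U$, fix some $\alpha_0 \in \A(x_0)$. The eigenvalues of $(h_{\alpha_0})^2_{x_0}$ on $(V_{\alpha_0}, (h_{\alpha_0})_{x_0})_{<\lambda}$ all lie in an interval $[0,\mu)$ with $\mu < \lambda$. Using continuity of eigenvalues (as in Lemma \ref{lem:dimension}), the support condition $\mathrm{supp}(\varrho_\alpha) \subset U_\alpha$, and the finiteness of $\A$, I shrink $U$ so that: $U \subset U_\alpha$ for every $\alpha \in \A(x_0)$; $\varrho_\alpha|_U \equiv 0$ for every $\alpha \notin \A(x_0)$; and for every $x \in U$ and $\alpha \in \A(x_0)$, no eigenvalue of $(h_\alpha)^2_x$ lies in $[\mu, \lambda)$. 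The third condition, combined with the second lemma of Subsection \ref{subsec:appendix_preliminary}, makes $E|_U := \bigcup_{x \in U}(E)_x$ into a finite-rank $\Z_2$-graded Hermitian vector bundle, via the identification with $\bigcup_{x \in U}(V_{\alpha_0}, (h_{\alpha_0})_x)_{<\lambda}$; moreover $(h_E)|_U$ becomes a continuous Hermitian endomorphism of degree $1$.

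With this bundle structure in place, functional calculus shows that $\rho_\infty((h_E)^2_x)$ and $\rho_\infty((h_E)^2_x)(h_E)_x$ depend continuously on $x$ as endomorphisms of $E|_U$. For each $\alpha \in \A(x_0)$, the operator $\rho_0^2((h_\alpha)^2_x) : V_\alpha \to V_\alpha$ takes values in $(V_\alpha, (h_\alpha)_x)_{<\lambda}$ continuously in $x$, and the latter is identified with the fibre of $E|_U$ at $x$ by $v \mapsto [v]$; summing against $\varrho_\alpha(x)$ (with the indices outside $\A(x_0)$ contributing zero) produces a continuous bundle map $(g)|_U : U \times \bigoplus_\alpha V_\alpha^1 \to E^1|_U$. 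Hence $(g^0)|_U$ and $(g^1)|_U$ are continuous bundle maps, and they are surjective at every point of $U$ by Lemma \ref{lem:appendix_surjectivity}. A pointwise-surjective continuous map of finite-rank bundles has a kernel of locally constant rank that splits off as a sub-bundle (lift a local frame of the target to a section of the surjection), so $F^i|_U$ is a vector bundle on $U$. Since $x_0 \in X$ was arbitrary, $F^i$ is a vector bundle on $X$.

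The most delicate step is the continuity of $(g)|_U$: one must verify that the identifications $V_\alpha|_U \supset (V_\alpha, (h_\alpha)_x)_{<\lambda} \cong (E)_x$ induced by $v \mapsto [v]$ are mutually compatible as $\alpha$ varies, so that $\sum_\alpha \varrho_\alpha(x)[\rho_0^2((h_\alpha)^2_x) v_\alpha^1]$ is unambiguously a continuous section of $E^1|_U$. This compatibility is exactly the content of the coherence conditions on the $\phi_{\alpha\beta}$ in the definition of a vectorial bundle, and the uniform choice of $\lambda$ made at the start of Subsection \ref{subsec:appendix_preliminary} ensures that all these identifications are simultaneously well-defined on $U$.
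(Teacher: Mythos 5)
Your argument breaks down at the step where you shrink $U$ so that ``for every $x \in U$ and $\alpha \in \A(x_0)$, no eigenvalue of $(h_\alpha)^2_x$ lies in $[\mu, \lambda)$.'' Continuity of the eigenvalue functions only prevents eigenvalues from moving far; it does not prevent an eigenvalue that equals $\lambda$ at $x_0$ from dropping to $\lambda - \varepsilon \in [\mu,\lambda)$ at points arbitrarily close to $x_0$. Whenever $\lambda \in \sigma((h_{\alpha_0})^2_{x_0})$ --- and nothing in the choice of $\lambda$ made in Subsection \ref{subsec:appendix_preliminary} rules this out --- no choice of $\mu$ and no shrinking of $U$ achieves your third condition, and $\dim (E)_x = \dim (V_{\alpha_0},(h_{\alpha_0})_x)_{<\lambda}$ genuinely jumps at $x_0$. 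This is exactly the phenomenon flagged in the remark following the second lemma of Subsection \ref{subsec:appendix_preliminary}: $\bigcup_{x}(E)_x$ is \emph{not} in general a vector bundle, not even locally near such a point (local triviality near every point would imply it globally). Since your whole strategy is to exhibit $F^i|_U$ as the kernel of a bundle map whose source and target involve $E^i|_U$ and $E^1|_U$ as honest bundles, this failure is fatal rather than technical. Note also that that second lemma only gives a bundle structure on the truncation $\bigcup_{x\in U}((E)_x,(h_E)_x)_{<\mu}$ for $\mu$ in the resolvent set, never on all of $(E)_x$.

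The paper circumvents this by never putting a bundle structure on $\bigcup_x(E)_x$. It embeds $(E^i)_x$ injectively into the \emph{fixed} space $V^i_{\alpha_0}$ via $(\tau^i_{\alpha_0})_x[v_\alpha] = (\phi_{\alpha_0\alpha})_x v_\alpha$, and replaces $(g^i)_x$ by auxiliary surjections $(g^i_{\alpha_0})_x : V^i_{\alpha_0}\oplus\bigoplus_\alpha V^1_\alpha \to V^1_{\alpha_0}$ whose domain and codomain are constant and which depend continuously on $x \in U_{\alpha_0}$; the kernel of such a continuously varying family of surjections between fixed spaces is automatically a bundle. A dimension count, using $\dim(E^0)_x - \dim(E^1)_x = \dim V^0_{\alpha_0} - \dim V^1_{\alpha_0}$, shows that $(F^i)_x \to \Ker (g^i_{\alpha_0})_x$ is bijective, which transports the bundle structure to $F^i|_{U_{\alpha_0}}$. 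Your surjectivity and kernel-splitting steps are fine as far as they go; it is the local triviality of $E|_U$ that cannot be salvaged, and any repair must route the argument through a fixed model space as the paper does.
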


\begin{proof}
We take and fix $x_0 \in X$ and $\alpha_0 \in \A(x_0)$. For $x \in U_{\alpha_0}$, we introduce linear maps as follows:
\begin{align*}
(\tau^i_{\alpha_0})_x &: \ (E^i)_x \longrightarrow V_{\alpha_0}^i, & \quad
[v_\alpha^i] &\mapsto (\phi_{\alpha_0 \alpha})_x v_\alpha \\
(g_{\alpha_0})_x &: \
\bigoplus_{\alpha \in \A} V_\alpha^1 \longrightarrow 
V^1_{\alpha_0}. &
\tbigoplus_\alpha v_\alpha^1 &\mapsto
\sum_\alpha \varrho_\alpha(x) 
(\phi_{\alpha_0 \alpha})_x 
\rho_0^2((h_\alpha)^2_x) v_\alpha^1
\end{align*}
Note that $(g_{\alpha_0})_x = (\tau^1_{\alpha_0})_x \circ (g)_x$. We also introduce
$$
(g^i_{\alpha_0})_x : \ 
V_{\alpha_0}^i \oplus 
\bigoplus_{\alpha \in \A} V_\alpha^1
\longrightarrow
V^1_{\alpha_0}
$$
by setting
\begin{align*}
(g^0_{\alpha_0})_x(v^0 \oplus (\tbigoplus_\alpha v_\alpha^1)) &= 
\rho_\infty((h_{\alpha_0})^2_x) (h_{\alpha_0})_x v^0 
+ (g)_x(\tbigoplus_\alpha v_\alpha^1), \\
(g^1_{\alpha_0})_x(v^1 \oplus (\tbigoplus_\alpha v_\alpha^1)) &=
\rho_\infty((h_{\alpha_0})^2_x) v^1 + (g)_x(\tbigoplus_\alpha v_\alpha^1).
\end{align*}
In the same way as in the proof of Lemma \ref{lem:appendix_surjectivity}, we see that $(g^i_{\alpha_0})_x$ are also surjective for $x \in U_{\alpha_0}$. Now, we have the commutative diagram:
$$
\begin{CD}
0 @>>> 
\Ker(g^i_{\alpha_0})_x @>>> 
V_{\alpha_0}^i \oplus 
{\displaystyle \bigoplus_{\alpha \in \A}} V_\alpha^1 
@>{(g^i_{\alpha_0})_x}>>
V^1_{\alpha_0} @>>>
0 \\
@. 
@AAA 
@A{(\tau^i_{\alpha_0})_x \oplus \mathrm{id}}AA 
@AA{(\tau^1_{\alpha_0})_x}A 
@. \\
0 @>>> 
(F^i)_x @>>> 
(E^i)_x \oplus 
{\displaystyle \bigoplus_{\alpha \in \A}} V_\alpha^1 @>{(g^i)_x}>>
(E^1)_x @>>>
0.
\end{CD}
$$
Since $(\tau^i_{\alpha_0})_x$ is injective, so is the map $(F^i)_x \to \Ker(g^i_{\alpha_0})_x$. Because $\dim (E^0)_x - \dim (E^1)_x = \dim V^0_{\alpha_0} - \dim V^1_{\alpha_0}$, we have $\dim (F^i)_x = \dim \Ker(g^i_{\alpha_0})_x$. This implies that the map $(F^i)_x \to \Ker(g^i_{\alpha_0})_x$ is bijective. Consequently, we can identify $F^i|_{U_{\alpha_0}}$ with the family of vector spaces $\bigcup_{x \in U_{\alpha_0}} \Ker(g^i_{\alpha_0})_x$. Because $(g^i_{\alpha_0})_x$ is continuous in $x \in U_{\alpha_0}$, the family $\bigcup_{x \in U_{\alpha_0}} \Ker(g^i_{\alpha_0})_x$ becomes a vector bundle. Hence the identification makes $F^i$ into a vector bundle.
\end{proof}


\subsection{Surjectivity}
\label{subsec:appendix_surjective}

In this subsection, we prove:

\begin{prop} \label{prop:appendix_surjective}
If $X$ is compact, then $K(X) \to \KF(X)$ is surjective.
\end{prop}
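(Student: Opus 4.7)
The plan is to prove surjectivity by explicitly realizing the class $[\mathbb{E}] \in \KF(X)$ as the image under $K(X) \to \KF(X)$ of $[F^0] - [F^1] \in K(X)$, where $F^0$ and $F^1$ are the Hermitian vector bundles constructed in Subsection~\ref{subsec:appendix_vector_bundle}. Since the natural map sends a $\Z_2$-graded vector bundle $V$ to the vectorial bundle $(V, 0)$ with zero Hermitian map, it suffices to produce an object of $\cKF(X \times I)$ whose endpoints are $\mathbb{E}$ at $t = 0$ and $(F^0 \oplus (F^1)^\vee, 0)$ at $t = 1$.

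First I would augment $\mathbb{E}$ by the trivial bundle $\underline{W} \oplus \underline{W}^\vee$, where $W = \bigoplus_\beta V_\beta^1$, equipped with zero Hermitian map. Since $(\underline{W} \oplus \underline{W}^\vee, 0)$ is null in $\KF(X)$ (by the construction of the inverse in $\KF(X)$ recalled in Section~\ref{sec:vectorial_bundle}), the augmented bundle $\mathbb{E}'$ has the same class as $\mathbb{E}$. Its local vector bundles are $V_\alpha \oplus W \oplus W$ with appropriate gradings, its local Hermitian maps are $h_\alpha \oplus 0$, and its transitions are $\phi_{\alpha\beta} \oplus \mathrm{id}$. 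The key advantage of working with $\mathbb{E}'$ is that both the source and the target of the degree-$1$ Hermitian map now contain the globally defined $W$, which is precisely the space on which the maps $(g^i)_x$ of Subsection~\ref{subsec:appendix_vector_bundle} act.

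Next I would interpolate the Hermitian map of $\mathbb{E}'$ from $h_\alpha \oplus 0$ at $t = 0$ to a new degree-$1$ Hermitian map at $t = 1$ built from the symmetrized surjections $(g^i)_x$, engineered so that its zero-eigenspace at each $x$ is precisely $(F^0)_x \oplus (F^1)_x^\vee$ while all its nonzero eigenvalues are bounded away from zero on compact $X$. The partition of unity $\{\varrho_\alpha\}$ used in Subsection~\ref{subsec:appendix_vector_bundle}, together with the chart-independence of $W$, is what allows the endpoint Hermitian map to be defined coherently across overlapping charts up to $\fallingdotseq$.

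The main obstacle will be twofold. First, one must verify that the interpolated family defines a genuine object of $\cKF(X \times I)$, i.e.\ that the transitions $\phi_{\alpha \beta} \oplus \mathrm{id}$ commute with the deformed Hermitian map at every $t$ up to $\fallingdotseq$; this is straightforward at $t = 0$ but requires care at $t = 1$ precisely because $(g^i)_x$ mixes local and global data through the partition of unity. Second, one must show that the $t = 1$ endpoint is isomorphic in $\cKF(X)$ to $(F^0 \oplus (F^1)^\vee, 0)$, which amounts to discarding the "large-eigenvalue" complement of $F^0 \oplus (F^1)^\vee$; this follows from the observation that for an everywhere invertible Hermitian map the low-eigenvalue subspaces of Definition~\ref{dfn:category_HF} vanish for sufficiently small $\mu$, so the corresponding vectorial bundle is isomorphic to the zero object in $\cKF(X)$.
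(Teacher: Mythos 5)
Your identification of the preimage---the class $[F^0]-[F^1]$ of the bundles built in Subsection~\ref{subsec:appendix_vector_bundle}---is correct, and the stabilization by the null object $(\underline{W}\oplus\underline{W}^\vee,0)$ with $W=\bigoplus_\alpha V^1_\alpha$ is consistent with where $F^0\oplus F^1$ actually lives. But the step you describe as ``engineering'' the $t=1$ Hermitian map is the entire content of the proof, and the way you have set it up makes it strictly harder than necessary, for two reasons. First, the kernel you want, $(F^i)_x=\Ker(g^i)_x$, is a subspace of $(E^i)_x\oplus W$, where $(E^i)_x=\coprod_{\alpha\in\A(x)}(V_\alpha,(h_\alpha)_x)^i_{<\lambda}/\!\sim$ is the glued low-eigenvalue space; this sits inside a chart $V_\alpha\oplus W\oplus W$ of $\mathbb{E}'$ only through the chart-dependent injections $(\tau^i_\alpha)_x$, whose images are the jumping families $(V_\alpha,(h_\alpha)_x)_{<\lambda}$. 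So the data of Subsection~\ref{subsec:appendix_vector_bundle} do not hand you a continuous degree-$1$ Hermitian map on $U_\alpha\times(V_\alpha\oplus W\oplus W)$ with kernel exactly $F_x$ and a uniform spectral gap. Second, a morphism in $\HF(U)$ is represented by a map that intertwines the Hermitian maps on the nose ($\phi h = h'\phi$, before passing to $\fallingdotseq$), so ``the transitions commute with the deformed map up to $\fallingdotseq$'' is not the condition you need; you would have to deform the transitions along with the Hermitian maps over all of $X\times I$ and re-verify the cocycle identities, and no recipe for this is given.

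The paper's proof avoids all of this by never deforming anything on the charts of $\mathbb{E}$. It equips the \emph{global} bundle $F=F^0\oplus F^1$ with a weighted metric (the factor $1/\lambda$ on the even $W$-summand) and the explicit degree-$1$ Hermitian map $h$ determined by $(h_{10})_x\bigl(v^0\oplus(\tbigoplus_\alpha v^1_\alpha)\bigr)=(h_E)_xv^0\oplus(\tbigoplus_\alpha v^1_\alpha)$ and its adjoint, and then proves by a direct eigenvalue computation (Lemma~\ref{lem:appendix_identification}) that $(F,h)_{x,<\mu_0}\cong((E)_x,(h_E)_x)_{<\mu_0}$, i.e.\ that $(F,h)$ and $\mathbb{E}$ are \emph{isomorphic} in $\cKF(X)$---no homotopy over $X\times I$ between them is needed. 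Since $F$ is a single vector bundle over all of $X$, the linear path $(F,th)$ is trivially an object of $\cKF(X\times I)$, giving $[\mathbb{E}]=[(F,h)]=[(F,0)]$, which is the image of $[F^0]-[F^1]\in K(X)$. If you wish to keep your homotopy-theoretic framing, the missing ingredient is precisely this lemma: establish the isomorphism $(F,h)\cong\mathbb{E}$ in $\cKF(X)$ first, and only then homotope, rather than interpolating Hermitian maps on the charts of $\mathbb{E}'$.
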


So far, a $\Z_2$-graded vector bundle $F = F^0 \oplus F^1$ over $X$ is constructed from a given vectorial bundle $\mathbb{E} \in \cKF(X)$. To prove the proposition above, it suffices to introduce a Hermitian metric and a Hermitian map $h$ on $F$ so that $(F, h)$ is isomorphic to $\mathbb{E}$ as a vectorial bundle. 

\medskip

For $x \in X$, we define a Hermitian metric on $(E^i)_x \oplus (\bigoplus_{\alpha \in \A} V^1_\alpha)$ by:
\begin{align*}
\lVert v^0 \oplus (\tbigoplus_\alpha v^1_\alpha) \rVert^2 &=
\lVert v^0 \rVert^2 
+ \frac{1}{\lambda} \sum_\alpha \lVert v_\alpha \rVert^2, \\
\lVert v^1 \oplus (\tbigoplus_\alpha v^1_\alpha) \rVert^2 &=
\lVert v^1 \rVert^2 
+ \sum_\alpha \lVert v_\alpha \rVert^2.
\end{align*}
We induce the Hermitian metric on $(F^i)_x$ by restriction. We then define
$$
(h_{10})_x : \ (F^0)_x \longrightarrow (F^1)_x, \quad
v^0 \oplus (\tbigoplus_\alpha v^1_\alpha) \mapsto 
(h_E)_x v^0 \oplus (\tbigoplus_\alpha v^1_\alpha).
$$
We put $h_x = (h_{10})_x + (h_{01})_x$, where $(h_{01})_x$ is the adjoint of $(h_{10})_x$:
$$
(h_{01})_x : \ (F^1)_x \longrightarrow (F^0)_x, \quad
v^1 \oplus (\tbigoplus_\alpha v^1_\alpha) \mapsto 
(h_E)_x v^1 \oplus (\tbigoplus_\alpha \lambda v^1_\alpha).
$$

\begin{lem} \label{lem:appendix_identification}
Let $\mu_0 > 0$ be such that $\varrho_0(r) = 1$ for $r \in [0, \mu_0] $. Then for $x \in X$ the composition of the inclusion and the projection
$$
(F^i)_x \longrightarrow 
(E^i)_x \oplus \bigoplus_{\alpha \in \A} V^1_\alpha \longrightarrow 
(E^i)_x
$$
induces an isomorphism $(F, h)_{x, <\mu_0} \cong ((E)_x, (h_E)^2_x)_{< \mu_0}$.
\end{lem}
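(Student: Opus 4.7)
The plan is to identify the inverse of the stated projection $\pi$, restricted to $(F, h)_{x, <\mu_0}$, as the map $\iota: ((E)_x, (h_E)^2_x)_{<\mu_0} \to F_x$ defined by $v \mapsto v \oplus 0$ (zero in the $\bigoplus_\alpha V^1_\alpha$ factor). The choice of $\mu_0$ yields $\rho_\infty(r) = 0$ on $[0, \mu_0]$ and forces $\mu_0 < \lambda$; these two facts drive the argument.

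First I would verify $\iota(v) \in F^i$ and that $\iota$ intertwines $(h_E)^2$ with $h^2$ on low-energy vectors. For $v$ in the spectral subspace of $(h_E)^2$ below $\mu_0$, $\rho_\infty((h_E)^2) v = 0$ and (since $(h_E)$ commutes with $\rho_\infty((h_E)^2)$) also $\rho_\infty((h_E)^2)(h_E) v = 0$; this gives $g^i(v \oplus 0) = 0$, placing $\iota(v)$ in $F^i$. Introduce the ambient extensions $\tilde h_{10}(v^0 \oplus m) = (h_E)v^0 \oplus m$ and its metric-adjoint $\tilde h_{01}(v^1 \oplus m) = (h_E)v^1 \oplus \lambda m$; both compositions $\tilde h_{01}\tilde h_{10}$ and $\tilde h_{10}\tilde h_{01}$ act as $(h_E)^2 \oplus \lambda\cdot\mathrm{id}$ on their respective source spaces. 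Applied to $\iota(v)$, the ambient result is $(h_E)^2 v \oplus 0$, which again lies in $F$ by the same vanishing of $\rho_\infty$, so no compression correction is needed and $h^2 \iota(v) = \iota((h_E)^2 v)$. Hence $\iota$ takes $((E)_x, (h_E)^2_x)_{<\mu_0}$ into $(F, h)_{x, <\mu_0}$, is injective, and $\pi$ is a tautological left inverse.

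Surjectivity reduces to a dimension count, done graded-piece by graded-piece. On the even side, the inclusion $\tilde h_{10}(F^0) \subseteq F^1$ (a direct check from the defining equations of $F^i$) gives $h_{10} = \tilde h_{10}|_{F^0}$ and consequently $h^2|_{F^0} = P_{F^0}\tilde h^2|_{F^0}$. Since the ambient $\tilde h^2$ on $(E^0)_x \oplus \bigoplus_\alpha V^1_\alpha$ is $(h_E)^2 \oplus \lambda\cdot\mathrm{id}$ with $\lambda > \mu_0$, its low-energy spectral subspace is $((E^0)_x, (h_E)^2)_{<\mu_0}$, and Cauchy (min-max) interlacing of the compression yields $\dim(F^0, h)_{x, <\mu_0} \le \dim((E^0)_x, (h_E)^2)_{<\mu_0}$. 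On the odd side this interlacing fails because $\tilde h_{01}$ need not preserve $F$, so I instead use index pairing: $\mathrm{ind}(h_{10}: F^0 \to F^1) = \dim F^0 - \dim F^1 = \dim(E^0) - \dim(E^1) = \mathrm{ind}((h_E))$ by the exact sequences defining $F^i$, and since the nonzero spectrum of $h^2$ matches across $F^0$ and $F^1$ via $h_{10}$, one obtains $\dim(F^0, h)_{<\mu_0} - \dim(F^1, h)_{<\mu_0} = \dim((E^0)_x, (h_E)^2)_{<\mu_0} - \dim((E^1)_x, (h_E)^2)_{<\mu_0}$. Combined with the even-side upper bound and the injectivity of $\iota$ on each graded piece, the dimensions must agree on both $i$, so $\iota$ is a graded isomorphism with inverse $\pi$.

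The main obstacle is the asymmetry between the gradings: $\tilde h_{10}$ preserves $F$ but $\tilde h_{01}$ does not, so Cauchy interlacing yields a clean dimension bound only on the even part. The index pairing of the graded degree-$1$ operator $h$ is what recovers the bound on the odd part.
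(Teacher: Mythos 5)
Your proof is correct, but it takes a genuinely different route from the paper's. The paper introduces the auxiliary operator $D=(h_E)^2_x\oplus(\tbigoplus_\alpha (h_\alpha)^2_x)$, observes that it is Hermitian for the chosen metric and commutes with $g^i$ and with $h$, and decomposes $F$ into the joint eigenspaces $F_\mu=F\cap\Ker(D-\mu)$; it then analyzes $h^2$ on each piece separately: for $\mu<\mu_0$ it computes $F_\mu=E_\mu\oplus V^1_\mu$ explicitly and checks that $h^2=\lambda$ on $V^1_\mu$, while for $\mu\ge\mu_0$ a Rayleigh-quotient estimate gives $h^2\ge\min\{\mu,\lambda\}\ge\mu_0$. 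You dispense with this simultaneous diagonalization: you exhibit the section $v\mapsto v\oplus 0$, obtain the even-graded upper bound from Cauchy interlacing of the compression $P_{F^0}\bigl((h_E)^2_x\oplus\lambda\bigr)|_{F^0}$, and transfer it to the odd part via the index identity $\dim F^0-\dim F^1=\dim(E^0)_x-\dim(E^1)_x$ (from the defining exact sequences) combined with the matching of the nonzero spectra of $h_{01}h_{10}$ and $h_{10}h_{01}$. Both arguments are complete. The paper's route yields a full description of the spectrum of $h^2$ on all of $F$, which is more information than the lemma asks for; your route is lighter on bookkeeping and, importantly, makes explicit the asymmetry that the ambient extension of $h_{01}$ does not preserve $F$ (so $h_{01}$ is genuinely a compression). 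The paper glosses over this point both in the formula it writes for $(h_{01})_x$ and in its one-line assertion that the eigenvalue bound on the even part of $F_\mu$ holds ``on the odd part'' as well --- a claim that itself secretly requires the same kind of index/dimension count you carry out.
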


\begin{proof}
The point $x \in X$ will be fixed in this proof. So we omit subscripts $x$ from $(F^i)_x$, $(E^i)_x$ $(h_E)_x^2$, and so on. The map $h_E^2 \oplus (\bigoplus_{\alpha} h_\alpha^2)$ is Hermitian with respect to the Hermitian metric on $E^i \oplus (\bigoplus_{\alpha \in \A} V^1_\alpha)$ introduced just before this lemma. Hence we get the orthogonal decomposition
$$
E^i \oplus \bigoplus_{\alpha \in \A} V^1_\alpha
= \bigoplus_{\mu \ge 0} \hat{F}_\mu, \quad \quad
\hat{F}_\mu =
\Ker( h_E^2 \oplus (\tbigoplus_\alpha h_\alpha^2) - \mu ).
$$
Since $g^i \circ \left( h_E^2 \oplus (\bigoplus_{\alpha} h_\alpha^2) \right) = h_E^2 \circ g^i$, we also get the orthogonal decomposition
$$
F = \bigoplus_{\mu \ge 0} F_\mu, \quad \quad
F_\mu = F \cap \hat{F}_\mu.
$$
Because $h \circ \left( h_E^2 \oplus (\bigoplus_{\alpha} h_\alpha^2) \right) = \left( h_E^2 \oplus (\bigoplus_{\alpha} h_\alpha^2) \right) \circ h$, the map $h$ preserves the orthogonal decomposition of $F$. Thus, in the following, we will verify the present lemma on each space $F_\mu$.

First, we suppose $\mu \ge \mu_0$. Then we have $(F, h)_{<\mu_0} \cap F_\mu = \{ 0 \}$. To see this, notice $\lambda \ge \mu_0$. For a vector $v^0 \oplus (\bigoplus_\alpha v^1_\alpha)$ in the even part $F^0_\mu$ of $F_\mu$, we have
$$
\frac{ \lVert h_{10}( v^0 \oplus (\tbigoplus_\alpha v^1_\alpha)) \rVert^2 }
{ \lVert v^0 \oplus (\tbigoplus_\alpha v^1_\alpha) \rVert^2 }
\ge
\frac
{ \mu \lVert v^0 \rVert^2 + \sum_\alpha \lVert v^1_\alpha \rVert^2 }
{ \lVert v^0 \rVert^2 + 
\frac{1}{\lambda} \sum_\alpha \lvert v^1_\alpha \rVert^2 }
\ge \min\{ \mu, \lambda \} \ge \mu_0.
$$
Hence the eigenvalue of $h^2$ is greater than or equal to $\mu_0$ on the even part $F^0_\mu$, and so is on the odd part $F^1_\mu$. 

Next, we consider the case of $\mu < \mu_0$. Because $\varrho_0(\mu) = 1$ and $\varrho_\infty(\mu) = 0$, a vector $v^i \oplus (\bigoplus_\alpha v^1_\alpha)$ in $\hat{F}^i_\mu$ belongs to $F^i_\mu$ if and only if $\sum_\alpha \varrho_\alpha v_\alpha^1 = 0$. Hence $F^i_\mu$ has the orthogonal decomposition $F^i_\mu = E^i_\mu \oplus V^1_\mu$, where 
\begin{align*}
E^i_\mu &= E^i \cap \Ker(h_E^2 - \mu), \\
V^1_\mu &= \{
\tbigoplus_\alpha v^1_\alpha \in \bigoplus_{\alpha \in \A}V^1_\alpha |\
\sum_\alpha \varrho_\alpha v^1_\alpha = 0\}
\cap \Ker(\tbigoplus_\alpha h_\alpha^2 - \mu).
\end{align*}
The Hermitian map $h^2$ preserves the decomposition. In particular, $h^2 = \lambda$ on $V^1_\mu$, so that $(F, h)_{<\mu_0} \cap F_\mu = E_\mu$. Thus, $(F, h)_{<\mu_0} = (E, h_E)_{<\mu_0}$.
\end{proof}

The isomorphism in Lemma \ref{lem:appendix_identification} is compatible with the Hermitian maps $(h)_x$ and $(h_E)_x$. In addition, the isomorphism in Lemma \ref{lem:appendix_identification} induces an isomorphism of vector bundles locally, provided that $\mu_0$ is chosen suitably. Now, the construction of $(E)_x$ implies that $(F, h)$ is isomorphic to $\mathbb{E}$ as a vectorial bundle, which completes the proof of Proposition \ref{prop:appendix_surjective}.

\medskip

If $Y \subset X$ is a closed subspace and $\mathrm{Supp} \mathbb{E} \cap Y = \emptyset$, then we also have $\mathrm{Supp} (F, h) \cap Y = \emptyset$. This means that we have the pair of vector bundle $(F^0, F^1)$ and $h_{10} : F^0 \to F^1$ is invertible on $Y$. As is known \cite{A}, such data $(F^0, F^1, h_{10})$ constitute the $K$-group $K(X, Y)$. Thus we get:

\begin{prop}
For a compact space $X$ and its closed subspace $Y \subset X$, there is a surjection $K(X, Y) \to \KF(X, Y)$.
\end{prop}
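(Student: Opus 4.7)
The plan is to adapt the absolute surjectivity argument (Proposition \ref{prop:appendix_surjective}) to the relative setting by tracking support. That construction starts with an arbitrary vectorial bundle $\mathbb{E} \in \cKF(X)$ and produces a genuine $\Z_2$-graded Hermitian vector bundle $F = F^0 \oplus F^1$ equipped with a Hermitian endomorphism $h = h_{10} + h_{10}^*$ of degree $1$ (with $h_{10}\colon F^0 \to F^1$) together with an isomorphism $(F, h) \cong \mathbb{E}$ in $\cKF(X)$. Given a class $[\mathbb{E}] \in \KF(X, Y)$ represented by $\mathbb{E}$ with $\mathrm{Supp}\,\mathbb{E} \cap Y = \emptyset$, I would apply this construction verbatim and then verify that the restriction $h_{10}|_Y$ is fiberwise invertible. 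By the description of $K(X, Y)$ in \cite{A} as homotopy classes of triples $(F^0, F^1, h_{10})$ with $h_{10}|_Y$ an isomorphism, this immediately yields a class in $K(X, Y)$, and, by construction, the natural homomorphism $K(X, Y) \to \KF(X, Y)$ sends it to $[\mathbb{E}]$.

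The substantive step is showing $\mathrm{Supp}(F, h) \cap Y = \emptyset$. I would fix $x \in Y$ and argue fiberwise: since $h$ is self-adjoint of degree $1$, $h_x$ is invertible on $F_x$ if and only if $h_{10, x}$ is a linear isomorphism, so it suffices to prove that $h_x$ is invertible. By hypothesis $x \notin \mathrm{Supp}\,\mathbb{E}$, so every $(h_\alpha)_x$ with $\alpha \in \A(x)$ is invertible, and therefore so is the operator $(h_E)_x$ on the local fiber $(E)_x$ of Subsection \ref{subsec:appendix_preliminary}. Lemma \ref{lem:appendix_identification} then identifies the low-eigenvalue subspace $(F, h)_{x, <\mu_0}$ with $((E)_x, (h_E)_x^2)_{<\mu_0}$ via a map compatible with the Hermitian endomorphisms, so $h_x$ is invertible on that low-eigenvalue subspace because $(h_E)_x$ is invertible on all of $(E)_x$. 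On the orthogonal complement, the eigenvalues of $h_x^2$ are at least $\mu_0$ by definition, so $h_x$ is automatically invertible there, and the desired conclusion follows.

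The main obstacle I anticipate is unpacking Lemma \ref{lem:appendix_identification} carefully enough to see that the projection realizing the isomorphism actually intertwines $h$ with $(h_E)$ in the sense needed for the fiberwise invertibility transfer; this amounts to revisiting the orthogonal decomposition $F^i_\mu = E^i_\mu \oplus V^1_\mu$ in the proof of that lemma and observing that on the summand $V^1_\mu$ the operator $h^2$ equals $\lambda > 0$, so its contribution is harmless. Once this fiberwise conclusion is in hand, the rest is formal: $h_{10}$ is a continuous morphism of finite-rank vector bundles invertible on the closed subspace $Y$, hence $(F^0, F^1, h_{10})$ is a legitimate representative of a class in $K(X, Y)$, and by construction it maps to $[\mathbb{E}]$ under $K(X, Y) \to \KF(X, Y)$, yielding the surjectivity.
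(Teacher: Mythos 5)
Your proposal is correct and follows the same route as the paper: run the absolute construction of $(F,h)$ from Proposition \ref{prop:appendix_surjective} and observe that $\mathrm{Supp}\,\mathbb{E}\cap Y=\emptyset$ forces $\mathrm{Supp}(F,h)\cap Y=\emptyset$, so that $(F^0,F^1,h_{10})$ defines a class in $K(X,Y)$ mapping to $[\mathbb{E}]$. The paper simply asserts the support statement, whereas you supply the (correct) fiberwise justification via Lemma \ref{lem:appendix_identification} and the spectral gap $\mu_0$ on the complement.
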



\subsection{Injectivity}
\label{subsec:appendix_injective}

\begin{prop} 
If $X$ is compact, then $K(X) \to \KF(X)$ is injective.
\end{prop}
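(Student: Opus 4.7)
The plan is to combine Proposition \ref{prop:appendix_surjective} applied on $X \times I$ with homotopy invariance of vector bundles, using a key lemma that identifies the virtual $K$-class of a vector bundle equipped with a Hermitian map whenever it represents a ``flat'' vectorial bundle $(E, 0)$. Specifically, I intend to prove the following key lemma: if $(G, k) \in \HF(X)$ and a $\Z_2$-graded Hermitian vector bundle $(E, 0) \in \HF(X)$ are isomorphic in $\cKF(X)$, then $[G^0] - [G^1] = [E^0] - [E^1]$ in $K(X)$. The argument exploits that $(E, 0)_{y, <\mu} = E_y$ for every $\mu > 0$, which collapses $\fallingdotseq$ into strict equalities whenever the zero map on $E$ is involved. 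Picking representatives $\Phi : E \to G$ and $\Psi : G \to E$ of the mutually inverse morphism classes, the defining compatibilities become $k \circ \Phi = 0$ and $\Psi \circ \Phi = 1_E$ globally, so $\Phi$ is a fiberwise injective degree-zero bundle map with image in $\Ker k$, realizing $E$ as a constant-rank subbundle of $G$ contained in $\Ker k$. The remaining relation $\Phi \circ \Psi \fallingdotseq 1_G$ supplies, in a neighborhood of each $y \in X$, the inclusions $(G, k)_{y, <\mu} \subseteq \Phi(E) \subseteq \Ker k \subseteq (G, k)_{y, <\mu}$, forcing $\Phi(E) = \Ker k$ pointwise; hence $\Ker k$ is a subbundle of $G$ that is $\Z_2$-graded-isomorphic to $E$ via $\Phi$. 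Since $k$ is self-adjoint, the orthogonal complement $(\Ker k)^\perp$ is $k$-invariant, and $k$ restricts there to a fiberwise bijective self-adjoint map of degree one; in particular, $k_{10}$ provides a vector bundle isomorphism $(\Ker k)^{\perp, 0} \cong (\Ker k)^{\perp, 1}$, so $[(\Ker k)^{\perp, 0}] = [(\Ker k)^{\perp, 1}]$ in $K(X)$. Adding even and odd ranks then yields $[G^0] - [G^1] = [E^0] - [E^1]$.

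With this key lemma in hand, I would conclude as follows. Suppose $x = [E^0] - [E^1] \in K(X)$ has image zero in $\KF(X)$; this means $(E, 0)$ is homotopic as a vectorial bundle to the trivial object $(0, 0)$, witnessed by some $\tilde{\mathbb{E}} \in \cKF(X \times I)$. Since $X \times I$ is compact, Proposition \ref{prop:appendix_surjective} furnishes $(\tilde F, \tilde h) \in \HF(X \times I)$ together with an isomorphism $(\tilde F, \tilde h) \cong \tilde{\mathbb{E}}$ in $\cKF(X \times I)$. Restricting to the two endpoints and composing with the slice isomorphisms yields $(\tilde F, \tilde h)|_{X \times \{0\}} \cong (E, 0)$ and $(\tilde F, \tilde h)|_{X \times \{1\}} \cong (0, 0)$ in $\cKF(X)$. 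Applying the key lemma at each endpoint gives $[\tilde F^0|_0] - [\tilde F^1|_0] = [E^0] - [E^1]$ and $[\tilde F^0|_1] - [\tilde F^1|_1] = 0$. Combining with the homotopy invariance $[\tilde F^i|_0] = [\tilde F^i|_1]$ in $K(X)$ (valid because $\tilde F^i$ is an honest vector bundle over the compact cylinder) yields $[E^0] - [E^1] = 0$, proving that $K(X) \to \KF(X)$ is injective.

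The main obstacle will be the key lemma, and inside it the global identification $\Phi(E) = \Ker k$; this hinges on carefully unraveling the local definition of $\fallingdotseq$ and on using that the ``$(E, 0)$'' side of the isomorphism has vanishing Hermitian map, so that $(E, 0)_{y, <\mu}$ fills up the entire fiber. Once that identification is in place, the orthogonal splitting and the classical homotopy invariance of finite-rank vector bundles over compact spaces handle the rest with no further subtlety.
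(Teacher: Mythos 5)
Your proposal is correct and follows essentially the same route as the paper: reduce to a homotopy over $X \times I$, replace it by an honest pair $(\tilde F,\tilde h)$ via the surjectivity construction, identify the endpoint classes in $K(X)$, and conclude by homotopy invariance of vector bundles. Your ``key lemma'' is a careful spelling-out of the step the paper states tersely (that $(F_i,0)\cong(\tilde F,\tilde h)|_{X\times\{i\}}$ forces $F_i\cong\Ker\tilde h|_{X\times\{i\}}$ with the orthogonal complement contributing zero to $K(X)$), and its verification via $\Phi(E)=\Ker k$ is sound.
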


\begin{proof}
Suppose that two $\Z_2$-graded Hermitian vector bundles $F_0$ and $F_1$ over $X$ give the same element in $\KF(X)$. Then there is a vectorial bundle $\tilde{\mathbb{F}}$ on $X \times [0, 1]$ such that $\tilde{\mathbb{F}}|_{X \times \{ i \}}$ is isomorphic to $(F_i, h_i)$ as a vectorial bundle, where $h_i$ is the trivial Hermitian map $h_i = 0$. Thanks to the construction proving Proposition \ref{prop:appendix_surjective}, we can replace $\tilde{\mathbb{F}}$ by a pair $(\tilde{F}, \tilde{h})$, where $\tilde{F}$ is a $\Z_2$-graded Hermitian vector bundle on $X \times [0, 1]$ and $\tilde{h} : \tilde{F} \to \tilde{F}$ is a Hermitian map of degree $1$. That $(F_i, h_i) \cong (\tilde{F}, \tilde{h})|_{X \times \{ i \}}$ as vectorial bundles means that $F_i \cong \Ker \tilde{h}^2|_{X \times \{ i \}}$ as vector bundles. This implies that the pairs $(F_i^0, F_i^1)$ and $(\tilde{F}^0|_{X \times \{ i \}}, \tilde{F}^1|_{X \times \{ i \}})$ are in the same class in $K(X)$. Therefore the pairs $(F_0^0, F_0^1)$ and $(F_1^0, F_1^1)$ represent the same class in $K(X)$.
\end{proof}

\medskip

\begin{prop} \label{prop:appendix_injective_relative}
For a compact space $X$ and its closed subspace $Y \subset X$, the map $K(X, Y) \to \KF(X, Y)$ is injective.
\end{prop}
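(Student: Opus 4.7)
The plan is to extend the strategy used in the absolute case (Proposition just proved) while tracking the behavior relative to $Y$. Suppose we are given two triples $(F_i^0, F_i^1, h_{i,10})$, $i = 0, 1$, in $K(X,Y)$ whose images in $\KF(X,Y)$ coincide. By definition this yields a vectorial bundle $\tilde{\mathbb{F}} \in \cKF(X \times I)$ with $\mathrm{Supp}\tilde{\mathbb{F}} \cap (Y \times I) = \emptyset$, restricting to the vectorial bundles $(F_i, h_i)$ at $X \times \{i\}$, where $F_i = F_i^0 \oplus F_i^1$ is equipped with the natural degree-$1$ Hermitian map $h_i$ whose off-diagonal blocks are $h_{i,10}$ and its adjoint.

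Next, I would apply the surjectivity construction of Subsections \ref{subsec:appendix_vector_bundle} and \ref{subsec:appendix_surjective} to $\tilde{\mathbb{F}}$ on the compact space $X \times I$. This produces a $\Z_2$-graded Hermitian vector bundle $\tilde F$ over $X \times I$ together with a degree-$1$ Hermitian map $\tilde h$ such that $(\tilde F, \tilde h) \cong \tilde{\mathbb{F}}$ as vectorial bundles. Since the support of $\tilde{\mathbb{F}}$ is disjoint from $Y \times I$, so is the support of $(\tilde F, \tilde h)$, which means $\tilde h_{10} : \tilde F^0 \to \tilde F^1$ is a vector bundle isomorphism over $Y \times I$. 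Hence $(\tilde F^0, \tilde F^1, \tilde h_{10})$ defines an element of $K(X \times I, Y \times I)$, and its restrictions at $t = 0, 1$ represent a single class in $K(X,Y)$.

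The main obstacle is to identify each restriction $(\tilde F, \tilde h)|_{X \times \{i\}}$ with the original triple $(F_i^0, F_i^1, h_{i,10})$ in $K(X,Y)$; all we have \emph{a priori} is an isomorphism of vectorial bundles $(\tilde F, \tilde h)|_{X \times \{i\}} \cong (F_i, h_i)$, which is weaker than equality of $K(X,Y)$-classes. I would resolve this by establishing the auxiliary lemma: if $(F, h)$ and $(F', h')$ are pairs of $\Z_2$-graded Hermitian vector bundles with degree-$1$ Hermitian maps that are invertible on $Y$, and they are isomorphic as vectorial bundles over $X$, then the corresponding triples represent the same class in $K(X, Y)$. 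Because $Y$ is compact and $h, h'$ are invertible on $Y$, one can choose $\mu_0 > 0$ smaller than every eigenvalue of $h_y^2$ and $h_y'^2$ for $y \in Y$, so that the low-eigenspace subspaces $(F, h)_{x, < \mu}$ and $(F', h')_{x, < \mu}$ vanish on $Y$ for any $\mu \leq \mu_0$. Locally, the splitting of $F^i$ into $< \mu$ and $\geq \mu$ parts exhibits the $K(X,Y)$-class as coming from the low-eigenspace subbundles only (since $h_{10}$ is a fiberwise isomorphism on the high part, contributing trivially), and the $\fallingdotseq$-condition in the vectorial bundle isomorphism identifies the low-eigenspace subbundles of $(F, h)$ and $(F', h')$ as vector bundles locally. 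To globalize, I would re-apply the construction of Subsection \ref{subsec:appendix_vector_bundle} to each of $(F, h)$ and $(F', h')$, observe that it produces stably isomorphic $\Z_2$-graded vector bundles with their degree-$1$ Hermitian maps agreeing up to the relation used in $K(X, Y)$, and read off the desired equality of classes. Combining this lemma with the homotopy obtained from $(\tilde F^0, \tilde F^1, \tilde h_{10})$ then gives the required equality of the original two classes in $K(X,Y)$.
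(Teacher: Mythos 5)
Your overall skeleton coincides with the paper's: apply the surjectivity construction (Proposition \ref{prop:appendix_surjective}) over the compact space $X\times I$ to replace the homotopy $\tilde{\mathbb{F}}$ by an honest pair $(\tilde F,\tilde h)$ with $\tilde h$ invertible over $Y\times I$, observe that the two endpoints of $(\tilde F,\tilde h)$ then agree in $K(X,Y)$, and reduce everything to the auxiliary statement that a vectorial-bundle isomorphism between pairs $(F_0,h_0)$ and $(F_1,h_1)$, with the $h_i$ invertible on $Y$, forces equality of their classes in $K(X,Y)$. That auxiliary statement is precisely the paper's Lemma \ref{lem:appendix_injective}, and it is where the real content of the proposition lives; you have correctly isolated it but not proved it.

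The sketch you offer for the auxiliary lemma has a gap at the globalization step. The subspaces $(F,h)_{x,<\mu}$ do not assemble into a subbundle of $F$ (their dimension jumps as $x$ varies), so the local identifications of ``low-eigenvalue parts'' supplied by the relation $\fallingdotseq$ cannot simply be glued together; and re-running the construction of Subsection \ref{subsec:appendix_vector_bundle} on $(F_0,h_0)$ and $(F_1,h_1)$ \emph{separately} produces two vector bundles with no canonical comparison map between them --- the isomorphism $g:F_0\to F_1$ of vectorial bundles must enter a single construction. The paper's resolution is an explicit global endomorphism of $F_0\oplus F_1$: after using compactness of $X$ to fix a uniform $\lambda$ for which $g$ identifies the spaces $(F_0,h_0)_{x,<\lambda}$ and $(F_1,h_1)_{x,<\lambda}$, and a partition of unity $\{\rho_0^2,\rho_\infty^2\}$ on $[0,\infty)$ adapted to $\{[0,\lambda),(0,\infty)\}$, one sets $\tilde h = g\rho_0(h_0^2)+g^{-1}\rho_0(h_1^2)+\mathrm{sgn}(h_0)\rho_\infty(h_0^2)-\mathrm{sgn}(h_1)\rho_\infty(h_1^2)$ and checks $\tilde h^2=1$, so that $\tilde h$ restricts to a global isomorphism $F_0^0\oplus F_1^1\cong F_0^1\oplus F_1^0$ of the kind required for equality in $K(X,Y)$. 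Some such explicit gluing (or an equally concrete substitute) is needed to close your argument; as it stands, the step ``observe that it produces stably isomorphic $\Z_2$-graded vector bundles'' is asserted rather than established.
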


\begin{lem} \label{lem:appendix_injective}
For $i = 0, 1$, let $F_i = F_i^0 \oplus F_i^1$ be a $\Z_2$-graded Hermitian vector bundle over a compact space $X$, and $h_i : F_i \to F_i$ a Hermitian map of degree $1$. If $(F_0, h_0)$ and $(F_1, h_1)$ are isomorphic in $\cKF(X)$, then $F_0^0 \oplus F_1^1$ and $F_0^1 \oplus F_1^0$ are isomorphic as vector bundles.
\end{lem}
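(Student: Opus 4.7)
The plan is to construct an explicit bundle isomorphism $T : F_0^0 \oplus F_1^1 \to F_0^1 \oplus F_1^0$ out of the data of the $\cKF$-isomorphism. Pick representatives $\phi : F_0 \to F_1$ and $\psi : F_1 \to F_0$ of the isomorphism and its inverse; these are degree-$0$ bundle maps with the strict intertwining $\phi h_0 = h_1 \phi$, $\psi h_1 = h_0 \psi$, together with $\psi\phi \fallingdotseq 1_{F_0}$ and $\phi\psi \fallingdotseq 1_{F_1}$. Writing $h_i^+ : F_i^0 \to F_i^1$ and $h_i^- = (h_i^+)^*$, and $\phi = \phi^0 \oplus \phi^1$ with $\phi^j : F_0^j \to F_1^j$, the intertwining decomposes as $\phi^1 h_0^+ = h_1^+ \phi^0$ and $\phi^0 h_0^- = h_1^- \phi^1$. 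I would then define
$$
T = \begin{pmatrix} h_0^+ & (\phi^1)^* \\ \phi^0 & -h_1^- \end{pmatrix} : \ F_0^0 \oplus F_1^1 \longrightarrow F_0^1 \oplus F_1^0.
$$

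The key observation is that inserting $(\phi^1)^*$ (rather than $\psi^1$) in the upper-right entry makes $T^*T$ block-diagonal: the off-diagonal entry $h_0^-(\phi^1)^* - (\phi^0)^* h_1^-$ is the adjoint of $\phi^1 h_0^+ - h_1^+ \phi^0$, which vanishes by the intertwining. Hence
$$
T^*T = \bigl((h_0^+)^* h_0^+ + (\phi^0)^* \phi^0\bigr) \oplus \bigl(\phi^1(\phi^1)^* + h_1^+ h_1^-\bigr).
$$
I would next verify that $T$ is injective on every fiber. If $T(x)(v_0,v_1) = 0$, then expanding $\lVert T(v_0,v_1)\rVert^2$ via the diagonal form of $T^*T$ produces a sum of four nonnegative squares, each of which must vanish. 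The conditions $h_0(x)v_0 = 0$ and $\phi^0(x)v_0 = 0$ force $v_0 \in \Ker h_0(x) \subset (F_0,h_0)_{x,<\mu}$ for any $\mu > 0$; choosing $\mu$ small enough that $\psi\phi = 1$ on this subspace (using $\psi\phi \fallingdotseq 1$ at $x$), one gets $v_0 = \psi(x)\phi(x)v_0 = 0$. Symmetrically, $h_1(x)v_1 = 0$ and $(\phi^1(x))^* v_1 = 0$ combine with $\phi\psi \fallingdotseq 1$ to give $v_1 \in \mathrm{Image}(\phi^1(x))$ and simultaneously $v_1 \perp \mathrm{Image}(\phi^1(x))$, so $v_1 = 0$.

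To upgrade fiberwise injectivity to a bundle isomorphism I must also confirm the two bundles have the same rank. At any fixed $x$, choose $\mu > 0$ so small that $\phi(x)$ restricts to a graded linear isomorphism $(F_0,h_0)_{x,<\mu} \cong (F_1,h_1)_{x,<\mu}$ and $\mu \notin \sigma(h_i^2(x))$; on the complementary subspace $(F_i,h_i)_{x,\ge\mu}$ the map $h_i^+$ exchanges the even and odd parts isomorphically, so their dimensions coincide. A short arithmetic then gives $\dim F_0^0 + \dim F_1^1 = \dim F_0^1 + \dim F_1^0$ at $x$, and since vector-bundle ranks are locally constant, the equality holds on each component. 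Fiberwise injectivity between bundles of equal rank is fiberwise bijectivity, so $T$ is a bundle isomorphism.

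The main subtle point is step three: turning the weak fact $\psi\phi \fallingdotseq 1$ into a pointwise identity used to rule out nonzero kernel vectors. This works only because $\Ker h_0(x)$ automatically sits inside $(F_0,h_0)_{x,<\mu}$ for every $\mu$, so the local-on-small-eigenvectors equivalence applies \emph{at} the offending vector; this is exactly the reason the construction of $T$ uses $(\phi^1)^*$, making $T^*T$ diagonal and reducing the injectivity check to the kernels of $h_0$ and $h_1$ where $\fallingdotseq$ can be exploited.
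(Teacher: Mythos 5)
Your proof is correct, and it reaches the isomorphism by a genuinely different mechanism than the paper's, even though both ultimately produce an explicit odd bundle map on $F_0 \oplus F_1$ mixing the $h_i$'s with the comparison map. The paper first uses compactness of $X$ to extract a single $\lambda>0$ such that the representative $g=\phi$ is invertible on all of $(F_0,h_0)_{x,<\lambda}$, then takes a partition of unity $\{\rho_0^2,\rho_\infty^2\}$ subordinate to $\{[0,\lambda),(0,\infty)\}$ and sets
$\tilde h = \bigl(g\rho_0(h_0^2)+g^{-1}\rho_0(h_1^2)\bigr)+\bigl(\mathrm{sgn}(h_0)\rho_\infty(h_0^2)-\mathrm{sgn}(h_1)\rho_\infty(h_1^2)\bigr)$;
the functional-calculus cutoffs are engineered precisely so that $\tilde h^2=1$, so $\tilde h$ is an involution exchanging $F_0^0\oplus F_1^1$ and $F_0^1\oplus F_1^0$ and bijectivity is automatic. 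You instead take the ``naive'' matrix $\left(\begin{smallmatrix} h_0^+ & (\phi^1)^* \\ \phi^0 & -h_1^-\end{smallmatrix}\right)$, which is not an involution, and compensate with the block-diagonality of $T^*T$, the pointwise kernel analysis (where $\Ker h_i(x)$ lies in $(F_i,h_i)_{x,<\mu}$ for every $\mu$, so $\fallingdotseq$ can be applied exactly where needed), and the spectral rank count. What your route buys: no functional calculus, and no need for the uniform $\lambda$ — hence no actual use of compactness, only the finite rank of the bundles. What the paper's route buys: an exact odd involution, so no separate injectivity or rank-comparison step. The one place to be slightly more explicit if you write this up is the claim that $\phi(x)$ restricts to a graded isomorphism $(F_0,h_0)_{x,<\mu}\cong(F_1,h_1)_{x,<\mu}$: it follows because $\phi$ and $\psi$ intertwine the $h_i^2$'s and hence preserve the eigenspace filtrations, and because both $\psi\phi\fallingdotseq 1$ and $\phi\psi\fallingdotseq 1$ give identities on these subspaces for $\mu$ small; but this is routine and does not affect correctness.
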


\begin{proof}
By the definition of the equivalences in $\cKF(X)$, we have a map $g : F_0 \to F_1$ of degree $0$ compatible with $h_0$ and $h_1$ such that: for each $x \in X$, there are a positive integer $\lambda_x$ and an open neighborhood $U_x$ of $x$ such that: $(g)_y$ induces an isomorphism $(F_0, h_0)_{y, <\lambda_x} \cong (F_1, h_1)_{y, <\lambda_x}$ for all $y \in X$. Since $X$ is compact, we can find a positive number $\lambda$ such that: $(g)_x$ induces an isomorphism $(F_0, h_0)_{x, <\lambda} \cong (F_1, h_1)_{x, <\lambda}$ for each $x \in X$. 

We choose such $\lambda$ as above, and take a partition of unity $\{ \rho_0^2, \rho_\infty^2 \}$ subordinate to the open cover $\{ [0, \lambda), (0, \infty) \}$ of $[0, \infty)$. The Hermitian map $\rho_0((h_1)^2_x) : (F_1)_x \to (F_1)_x$ plays a role of a continuous ``approximation'' of (a square root of) the orthogonal projection onto $(F_1, h_1)_{x, <\lambda}$. In fact, the image of $\rho_0((h_1)^2_x)$ is in $(F_1, h_1)_{x, <\lambda}$. Hence the map $(g)_x^{-1} \rho_0((h_1)^2_x) : (F_1)_x \to (F_0)_x$ makes sense. In particular, this map is continuous in $x$, so that we obtain a vector bundle map $g^{-1}\rho_0(h_1^2) : F_1 \to F_0$ of degree $0$. Similarly, the image of $\rho_\infty((h_1)^2_x)$ is in $(F_1, h_1)_{x, \ge \lambda} = \bigoplus_{\kappa \ge \lambda}\Ker((h_1)^2_x - \kappa)$. Thus, we obtain a vector bundle map $\mathrm{sgn}(h_1) \rho_\infty(h_1^2) : F_1 \to F_1$ of degree $1$, where $\mathrm{sgn}(t) = t/\lvert t \rvert$.

Now, we define $\tilde{h} : F_0 \oplus F_1 \to F_0 \oplus F_1$ to be $\tilde{h} = \tilde{h}_0 + \tilde{h}_\infty$, where
\begin{align*}
\tilde{h}_0 &= 
g\rho_0(h_0^2) + g^{-1}\rho_0(h_1^2), \\
\tilde{h}_\infty &= 
\mathrm{sgn}(h_0)\rho_\infty(h_0^2) - \mathrm{sgn}(h_1)\rho_\infty(h_1^2).
\end{align*}
Then $\tilde{h}_0^2 = \rho_0(h_1^2)^2 + \rho_0(h_0^2)^2$, $\tilde{h}_\infty^2 = \rho_\infty(h_1^2)^2 + \rho_\infty(h_0^2)^2$ and $\tilde{h}_0\tilde{h}_\infty + \tilde{h}_\infty\tilde{h}_0 = 0$. Therefore $\tilde{h}^2 = 1$ and  $\tilde{h}$ is an isomorphism. By construction, $\tilde{h}$ carries the component $F_0^0 \oplus F_1^1$ to $F_0^1 \oplus F_1^0$, and vice verse.
\end{proof}

\begin{proof}[Proof of Proposition \ref{prop:appendix_injective_relative}.]
For $i = 0, 1$, we let $(F_i, h_i)$ represent an element in $K(X, Y)$, where $F_i = F_i^0 \oplus F_i^1$ is a $\Z_2$-graded Hermitian vector bundle on $X$ and $h_i : F_i \to F_i$ is a Hermitian map of degree $1$ such that $h_i$ is invertible on $Y$. Suppose that $(F_i, h_i)$ define the same element in $\KF(X, Y)$. Because $X \times [0, 1]$ is compact, the construction showing Proposition \ref{prop:appendix_surjective} gives a $\Z_2$-graded Hermitian vector bundle $\tilde{F}$ on $X \times [0, 1]$ and a Hermitian map $\tilde{h}$ of degree $1$ such that: $\tilde{h}$ is invertible on $Y \times [0, 1]$ and we have $(F_i, h_i) \cong (\tilde{F}, \tilde{h})|_{X \times \{ i \}}$ in $\cKF(X, Y)$. Now, Lemma \ref{lem:appendix_injective} implies that $(F_i, h_i)$ and $(\tilde{F}, \tilde{h})|_{X \times \{ i \}}$ represent the same class in $K(X, Y)$. Thus $(F_0, h_0)$ and $(F_1, h_1)$ are in the same class in $K(X, Y)$.
\end{proof}


\end{document}